\def\[#1\]{\begin{align}#1\end{align}}
\def\(#1\){\begin{align*}#1\end{align*}}
\newcommand{\ii}{\mathbbm{i}}
\renewcommand{\C}{\mathbb{C}}
\newcommand{\R}{\mathbb{R}}
\newcommand{\Z}{\mathbb{Z}}
\newcommand{\kk}{\mathbbm{k}}
\newcommand{\jj}{\mathbbm{j}}
\newcommand{\norm}[1]{\left\vert #1 \right \vert}	
\newcommand{\Norm}[1]{\left\Vert #1 \right \Vert}
\renewcommand{\ker}{\text{Ker }}
\newcommand{\Isom}{\text{Isom}}
\newcommand{\rad}{\operatorname{rad}}
\newcommand{\comment}[1]{}
\newcommand{\ignore}[1]{}
\newtheorem{thm}{Theorem}
\newtheorem{prop}[thm]{Proposition}
\newtheorem{lemma}[thm]{Lemma}
\newtheorem{cor}[thm]{Corollary}
\numberwithin{thm}{section}
\theoremstyle{definition}
\newtheorem{defi}[thm]{Definition}
\theoremstyle{definition}
\newtheorem{example}[thm]{Example}
\newtheorem{remark}[thm]{Remark}
\numberwithin{equation}{section}
\newcommand{\mst}{\;:\;}
\newcommand{\Zee}{\mathcal Z}
\newcommand{\cupover}{\bigcup}
\newcommand{\Sph}{\mathbb S}
\title[Serendipitous decompositions]{Serendipitous decompositions of higher-dimensional continued fractions}
\author[A. Lukyanenko]{Anton Lukyanenko}
\address{
Department of Mathematics\\
George Mason University\\
4400 University Drive, MS: 3F2\\
Fairfax, Virginia 22030}
\email{alukyane@gmu.edu}
\author[J. Vandehey]{Joseph Vandehey}
\address{
Department of Mathematics\\
University of Texas at Tyler\\
Tyler, TX 75799
}
\email{jvandehey@uttyler.edu}
\subjclass[2020]{11K50, 37A44, (11R52)}
\keywords{Continued fractions, invariant measure, complex continued fractions, quaternions, octonions, Iwasawa continued fractions}
\begin{document}

\begin{abstract}
    We prove a suite of dynamical results, including exactness of the transformation and piecewise-analyticity of the invariant measure, for a family of continued fraction systems, including specific examples over reals, complex numbers, quaternions, octonions, and in $\R^3$. Our methods expand on the work of Nakada and Hensley, and in particular fill some gaps in Hensley's analysis of Hurwitz complex continued fractions. We further introduce a new ``serendipity'' condition for a continued fraction algorithm, which controls the long-term behavior of the boundary of the fundamental domain under iteration of the continued fraction map, and which is under reasonable conditions equivalent to the finite range property. We also show that the finite range condition is extremely delicate: perturbations of serendipitous systems by non-quadratic irrationals do not remain serendipitous, and experimental evidence suggests that serendipity may fail even for some rational perturbations.
\end{abstract}

\maketitle

\section{Introduction}
\label{sec:intro}
The regular continued fraction (CF) expansion of an irrational real number $x_0\in [0,1)$,
\(
x_0= \cfrac{1}{a_1+\cfrac{1}{a_2+\cfrac{1}{a_3+\dots}}}, \quad a_i\in \mathbb{N},
\)
expresses $x_0$ as an alternating sequence of inversions $\iota(x)=1/x$ and shifts $x\mapsto a+x$ for integers $a$. The CF map $T:[0,1)\to [0,1)$ given by $Tx = 1/x - \lfloor 1/x\rfloor$ acts as a forward shift on the sequence of digits. The map $T$, often called the Gauss map, satisfies a number of important dynamical properties. In particular, $T$ is exact\footnote{All dynamical statements are with respect to Lebesgue measure, unless otherwise stated.} (and thus ergodic) and satisfies a Kuzmin-type theorem: non-singular probability distributions converge at an exponential rate to the invariant measure with density $\frac{1}{\log 2}\frac{1}{1+x}$. For more general information, see \cite{DK,EW,HensleyBook,IK}.

In this paper, we provide a unified analysis of a wide range of Iwasawa CFs \cite{lukyanenko_vandehey_2022} in $\R^d$, including certain complex, quaternionic, and octonionic CFs, as well as more exotic systems such as 3D CFs  with digits in $\Z^3$ and inversion $\iota(x,y,z)=\frac{(x,y,z)}{x^2+y^2+z^2}$. In particular, our Main Theorem \ref{thm:main} implies:
\begin{thm}
\label{thm:kindamain}
The CF system associated to the Hurwitz integers within the quaternions (see Example \ref{example:quaternions} in \S \ref{sec:examples}) is exact, continued-fraction-mixing, satisfies a Kuzmin-type theorem, and has a unique invariant measure equivalent to Lebesgue measure, whose density is bounded and piecewise-analytic with finitely many pieces.
\end{thm}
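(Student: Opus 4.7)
The plan is to derive Theorem \ref{thm:kindamain} as a specialization of the Main Theorem \ref{thm:main}: once the Hurwitz quaternion CF is exhibited as an instance of the general Iwasawa framework satisfying all hypotheses of that theorem, the four conclusions (exactness, continued-fraction mixing, the Kuzmin-type theorem, and a bounded piecewise-analytic invariant density) follow simultaneously. So the proof reduces to a verification: check that Example \ref{example:quaternions} meets the assumptions of Theorem \ref{thm:main}.

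First I would set up the Hurwitz system in the language of \cite{lukyanenko_vandehey_2022}: the ambient space is the quaternions $\Hyp$, the inversion is $\iota(x)=x^{-1}$ (the standard Iwasawa inversion coming from the rank-one real form of the symplectic group realized over $\Hyp$), the digit set is the ring of Hurwitz integers, and the fundamental domain $F$ is the Voronoi cell of the Hurwitz lattice in $\R^4$, namely a regular $24$-cell up to scaling. With this identification, the hypotheses of Theorem \ref{thm:main} that require attention are of two flavors: (a) elementary analytic/geometric facts about $F$ and $\iota$, such as real-analyticity of $\iota$ away from the origin, bounded distortion for $T$, and covering of $\iota(F)$ by digit translates of $F$; and (b) the serendipity, equivalently under the paper's hypotheses the finite range, condition highlighted in the abstract.

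The substantive step, and the one I expect to be the main obstacle, is (b). One has to track the forward iterates $T^n(\partial F)$ of the boundary of the fundamental domain and show that, modulo the action of the stabilizer of $F$, only finitely many pieces of $T^n(\partial F)$ ever appear. For the real Gauss and Hurwitz complex systems this sort of verification is classical but delicate, and the authors have already flagged gaps in Hensley's treatment. In the quaternionic setting the essential new feature is the large binary tetrahedral symmetry group of the $24$-cell, which both enlarges the combinatorics of the $2$- and $3$-faces of $F$ and, crucially, collapses the forward orbit of boundary pieces to a manageable finite set. Concretely, I would enumerate the codimension-one faces of $F$, apply $\iota$ to each, intersect with every digit-shifted copy of $F$ meeting $\iota(F)$, and then check that iteration produces no new face types after finitely many steps; the binary tetrahedral symmetry should reduce what is in principle a large bookkeeping problem to a short finite list. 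It would be natural to assist the enumeration with a computer search, mirroring what is done in the complex cases.

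With serendipity in hand, the remaining assumptions of Theorem \ref{thm:main} are routine. Analyticity of $\iota$ on $\Hyp\setminus\{0\}$ is immediate; the Jacobian of $T$ on each full cylinder is a straightforward quaternionic computation giving the needed distortion and summability bounds; the Hurwitz integers tile $\Hyp$ by translation, so every point of $\iota(F)$ is covered by some digit-shifted copy of $F$; and the lattice admits only finitely many shortest vectors, keeping the number of boundary-adjacent digits finite. Feeding these inputs plus (b) into Theorem \ref{thm:main} then yields exactness, continued-fraction mixing, the Kuzmin-type theorem, and the bounded piecewise-analytic density with finitely many analytic pieces, completing the proof of Theorem \ref{thm:kindamain}.
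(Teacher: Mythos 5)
You correctly observe that Theorem~\ref{thm:kindamain} should follow by specializing the Main Theorem~\ref{thm:main} to the Hurwitz quaternion case, but you have misread where the work lies. You treat serendipity (equivalently, the finite-range property) as a \emph{hypothesis} of Theorem~\ref{thm:main} that must be verified example-by-example by tracking the iterates $T^n\partial K$, if need be by computer search. In fact, the finite-range property is \emph{conclusion} (1) of Theorem~\ref{thm:main}, not a hypothesis; the hypotheses are the purely algebraic and geometric lattice conditions: integral, unit-generated, norm-Euclidean, nicely invertible, and 3-remote (or 7-remote). The paper's central technical contribution, Lemma~\ref{lemma:Proving finite range} (the type-1/2/3 objects argument), is precisely an \emph{abstract} proof that these lattice conditions force the forward orbit of $\partial K$ to stabilize, uniformly across all the example lattices, with no case-by-case enumeration, no reliance on the symmetry group of the 24-cell, and no computer search.

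For the Hurwitz integers the required checks are short. Integrality, unit-generation, and nice invertibility (the Hurwitz order is closed under conjugation, so $|z|^2 z^{-1}=\bar z\in\mathcal H$) are immediate from the definition. Norm-Euclideanity is the classical fact that $\rad(K)=1/\sqrt{2}$ (Lemma~\ref{lemma:example lattices are norm-Euclidean}, citing Mennen). Crucially, 3-remoteness then follows for free: since $\rad(K)=1/\sqrt{2}\approx 0.707<\sqrt{3}-1\approx 0.732$, every lattice point of norm $\sqrt{3}$ is automatically at distance at least $1$ from $K$ (Proposition~\ref{prop: example lattices are 3-remote}). That is the entire verification. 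Your proposed route of re-deriving finite range by explicit iteration of the faces of the 24-cell would, even if carried out with exact arithmetic, be both unnecessary and incomplete as a proof strategy: you would bypass the paper's structural argument but would still need to independently establish conditions (A)--(H) of Section~\ref{sec:black box} to invoke the black-box theorems, which is exactly what the abstract lattice hypotheses of Theorem~\ref{thm:main} are designed to shortcut.
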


\begin{figure}
    \centering
     \hfill{}
     \begin{subfigure}[b]{\textwidth}
         \centering
        \includegraphics[width=.23\textwidth]{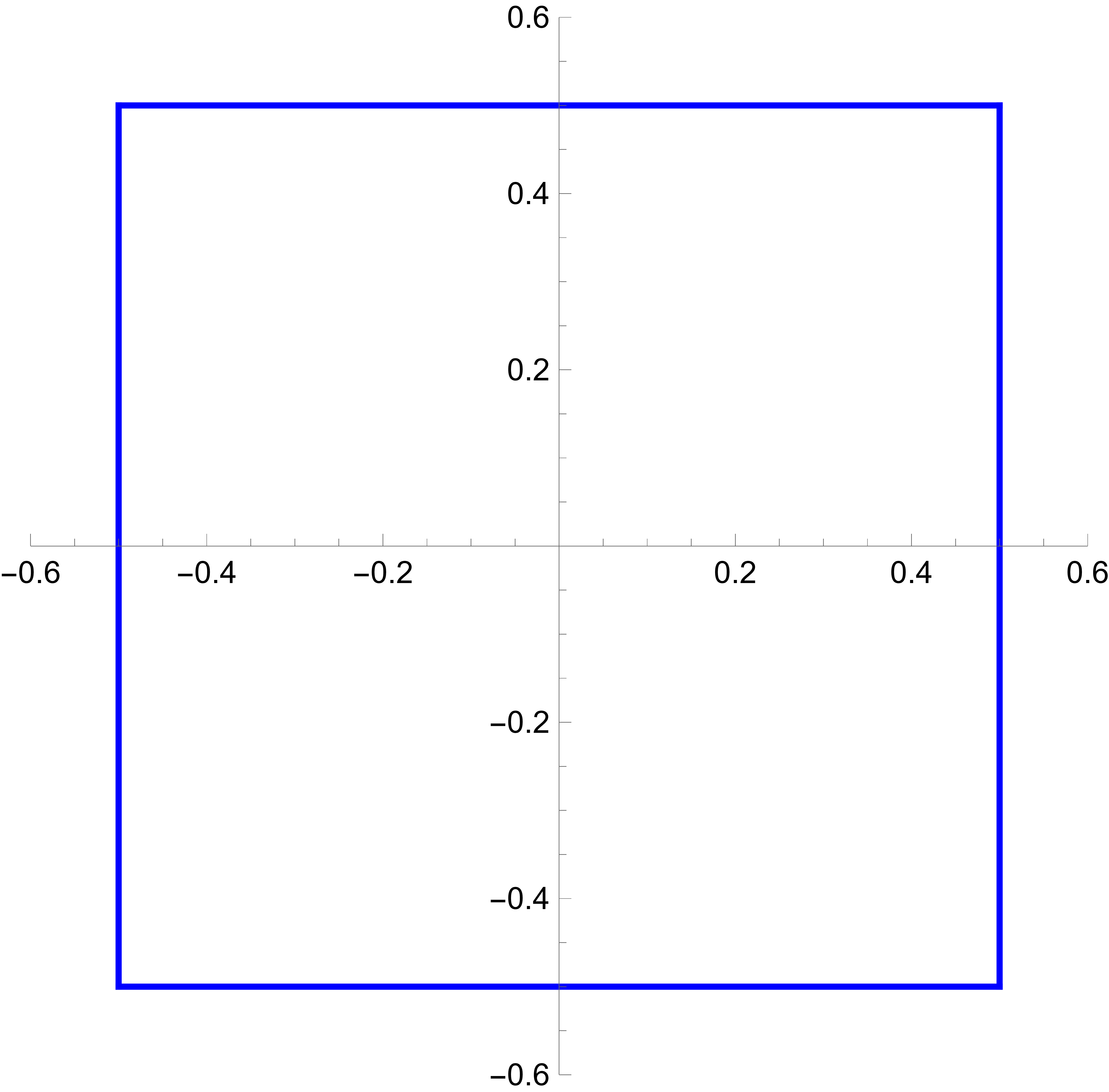}
        \includegraphics[width=.23\textwidth]{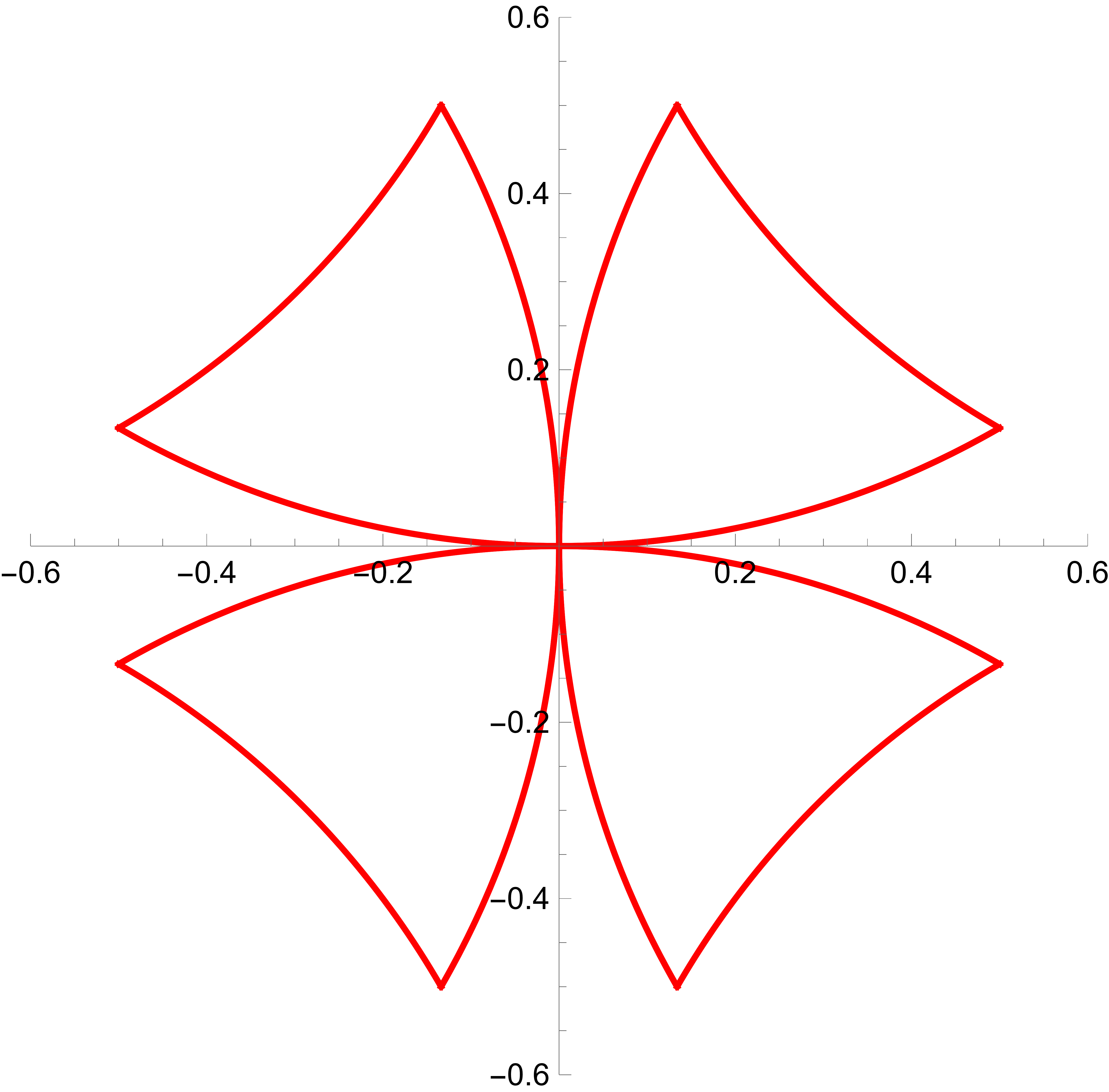}
        \includegraphics[width=.23\textwidth]{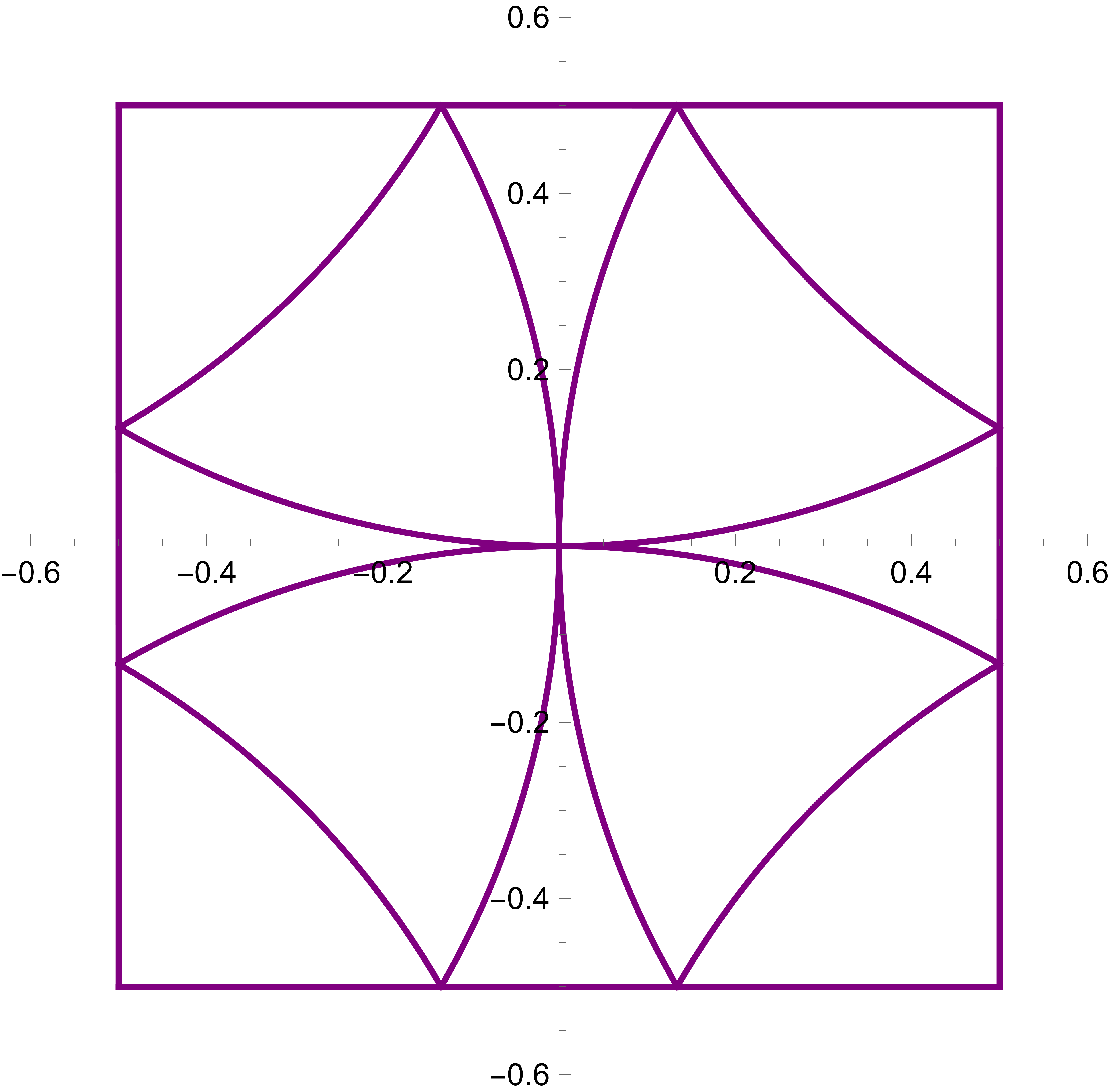}
        \includegraphics[width=.23\textwidth]{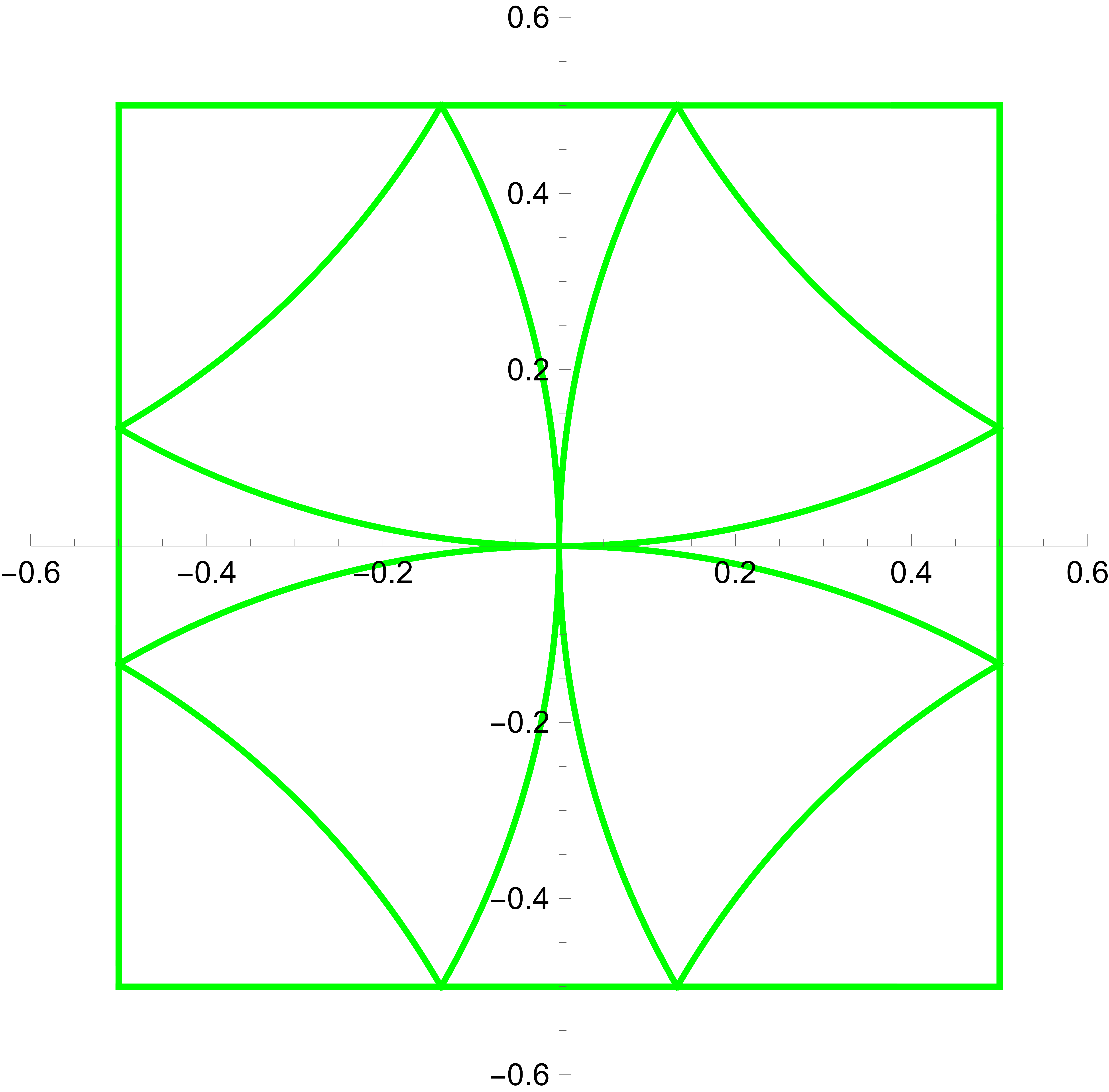}
         \caption{The Hurwitz complex CF, with $K=[-0.5,0.5)\times [-0.5,0.5)$.}\label{fig:planar Hurwitz}
     \end{subfigure}\\
     \begin{subfigure}[b]{\textwidth}
         \centering
        \includegraphics[width=.23\textwidth]{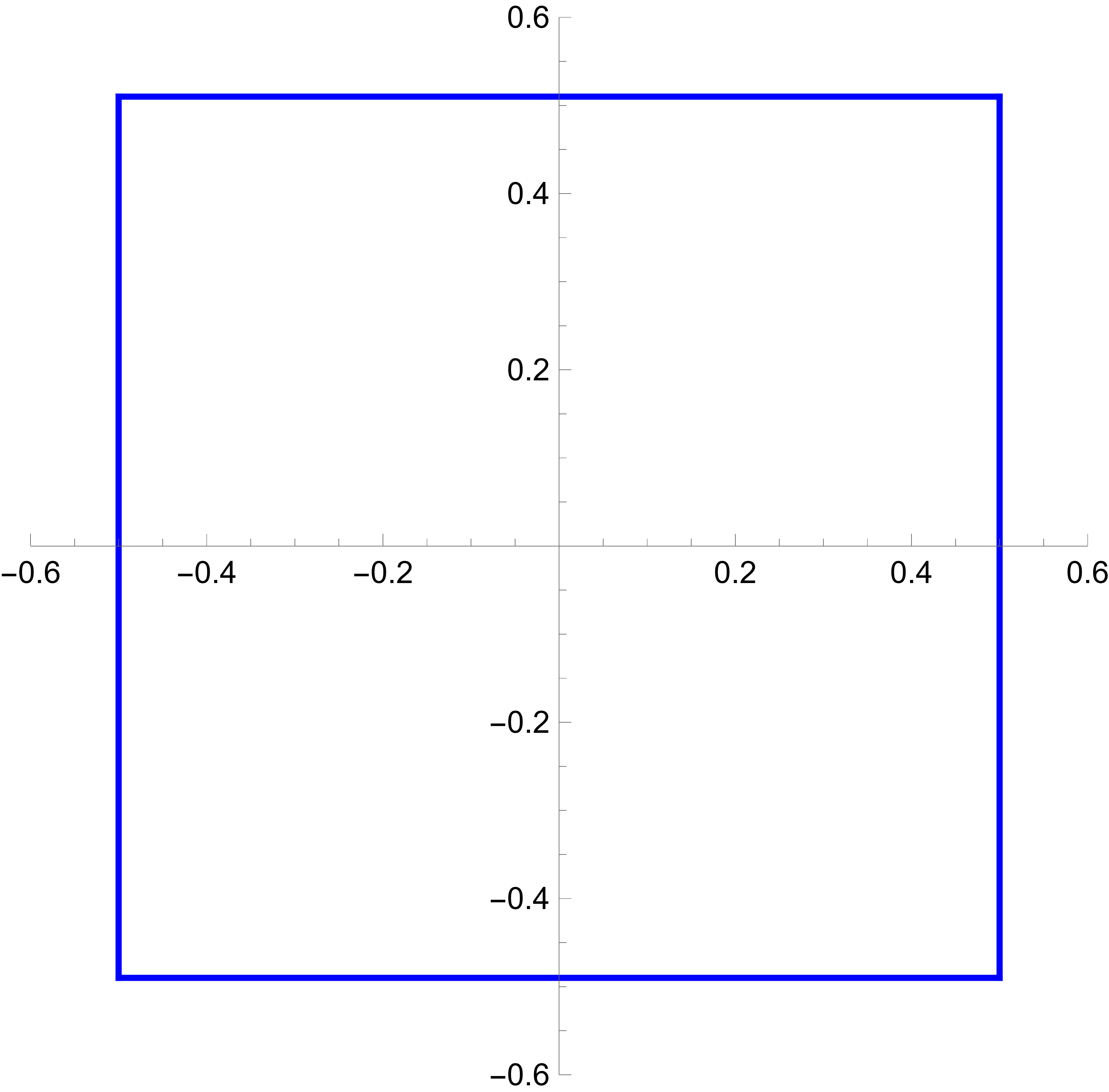}
        \includegraphics[width=.23\textwidth]{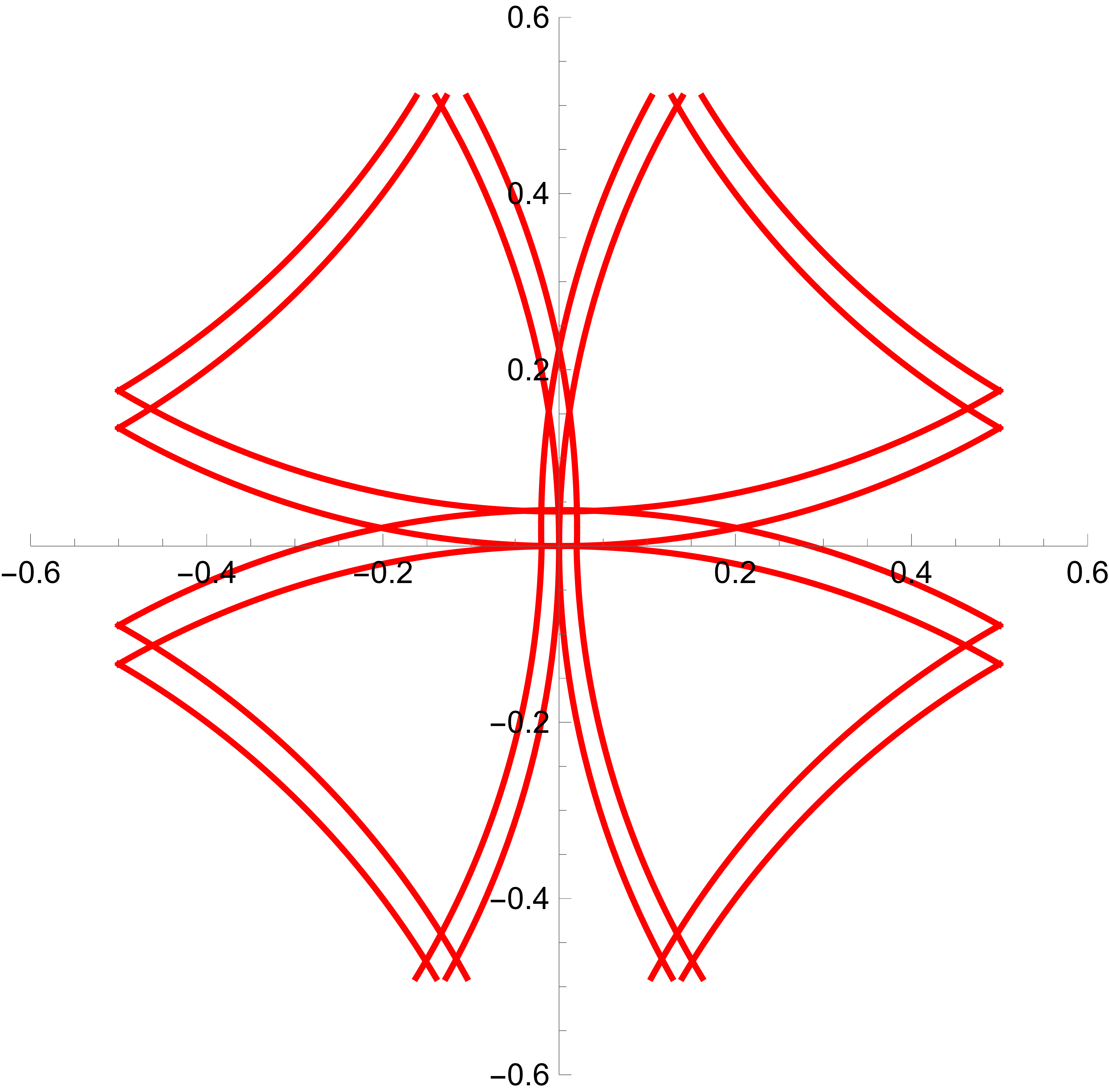}
        \includegraphics[width=.23\textwidth]{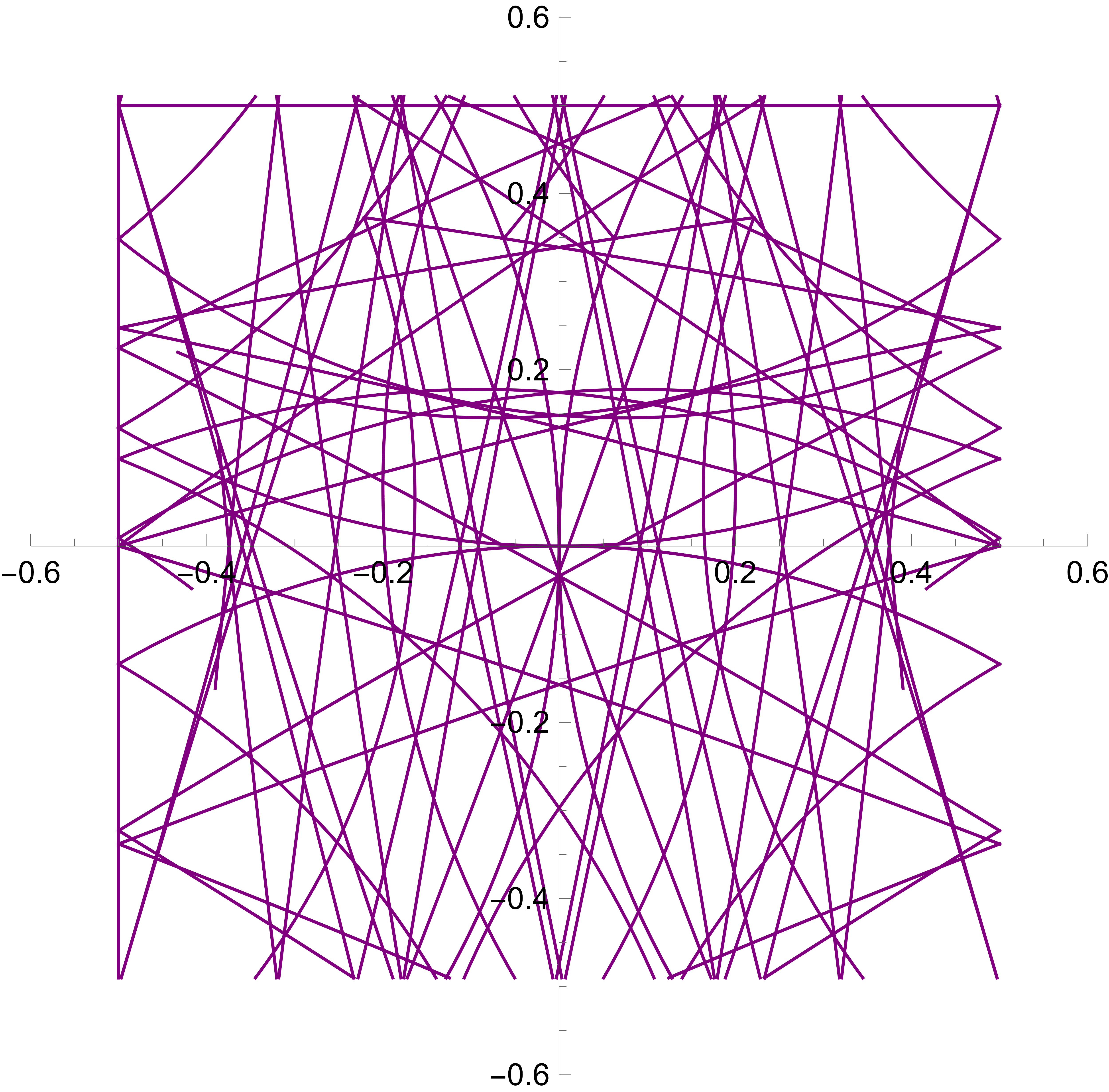}
        \includegraphics[width=.23\textwidth]{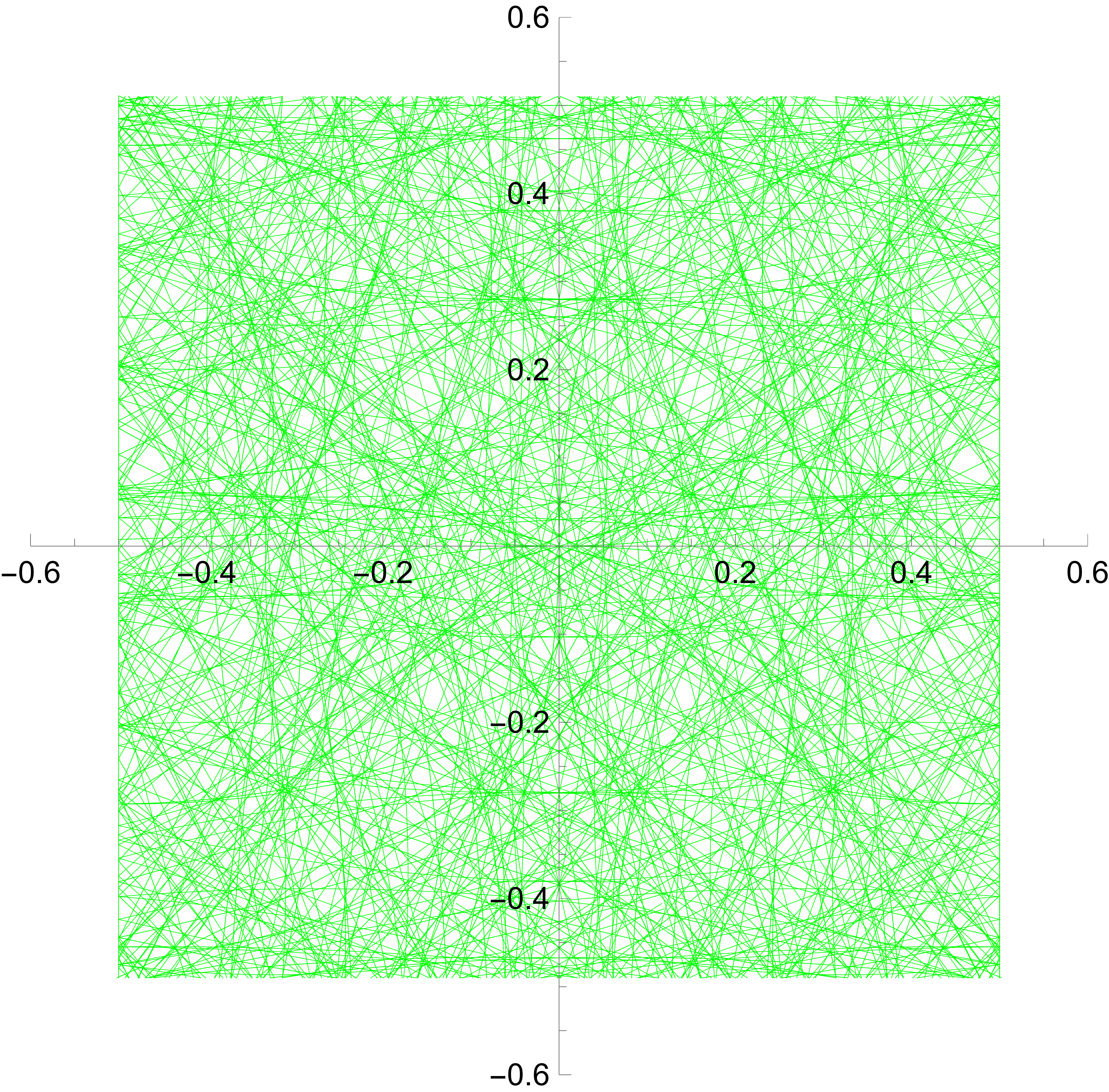}
         \caption{A perturbed Hurwitz CF, with $K=[-0.5,0.5)\times [-0.49,0.51)$.}\label{fig:planar offset}
     \end{subfigure}\\
     \begin{subfigure}[b]{0.7\textwidth}
         \centering
        \includegraphics[width=.8\textwidth]{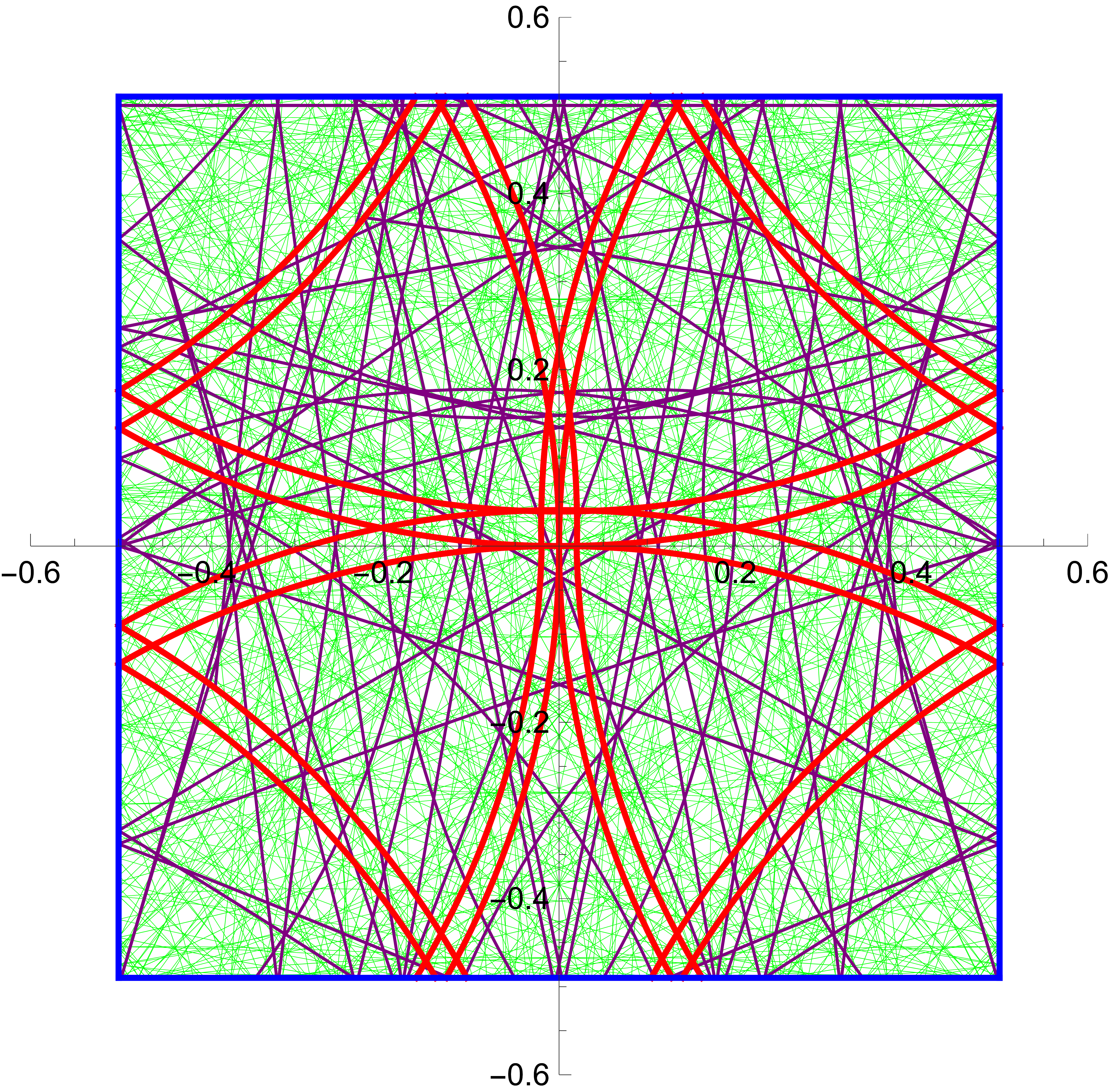}
         \caption{A composite picture of the perturbed Hurwitz CF.}\label{fig:planar composite}
     \end{subfigure}
    \caption{The Hurwitz continued fraction associated to $\Z[i]$ and $\iota(z)=1/z$ has a serendipitous decomposition. The sequence (a) shows, left to right, $T^i \partial K$ for $i=0, \ldots, 3$. Even minor perturbations of the algorithm, such as shifting $K$ up by $0.01\ii$, appear to destroy the decomposition (b, c).}
    \label{fig:planar}
\end{figure}

Our primary tool will be the analysis of cylinder sets, which is commonly employed under a full-cylinder condition (satisfied by regular CFs) as in \cite{bandtlow2007invariant} or a finite-range condition (satisfied by nearest-integer CFs and A.~Hurwitz complex CFs) as in \cite{HensleyBook,Nakada1976}. If $T$ is the CF map on a set $K$, then a (rank-$n$) cylinder set $C_{a_1 a_2\dots a_n}\subset K$ is the set of all numbers in $K$ whose expansion begins with the digits $a_1, a_2,\dots, a_n$. A cylinder is \emph{full} if $T^n C_{a_1\dots a_n}=K$; while the \emph{finite range} condition assumes that there are finitely many possibilities for what  $T^n C_{a_1\dots a_n}$ could be. 

We will work with the finite range condition by rephrasing it as the equivalent (in our setting, see Lemma \ref{lemma:FRisSer}) condition that we call \emph{serendipity}. Namely, we will say that an Iwasawa CF system is \emph{serendipitous} if the image $E=\cupover_{k=0}^\infty T^k \partial K$ of the boundary of the fundamental domain $K$ under the map $T$ stabilizes after \emph{finitely many} iterations, so that $E=\cupover_{k=0}^n T^k \partial K$ for some $n$, and if furthermore $K\setminus E$ has finitely many connected components (giving the \emph{serendipitous decomposition} of $K$). In the one-dimensional case, the serendipity condition reduces to the \emph{finiteness condition} of \cite{katok2010structure}.

Figure  \ref{fig:planar Hurwitz} demonstrates serendipity for the square lattice in $\R^2$ (corresponding to A.~Hurwitz complex CFs), with 12 connected components in $K\setminus E$; any set of the form $T^n C_{a_1\dots a_n}$ is then a union of some of these connected components, up to a measure-zero subset of $E$.  Serendipity for the the cubic lattice in $\R^3$, and the rhombic dodecahedral lattice in $\R^3$, are illustrated in Figure \ref{fig:cube and dodecahedron}. Additional examples fitting the assumptions of Theorem \ref{thm:main}, including the serendipity assumption, are discussed in Section \ref{sec:examples}.

Using serendipity, we can demonstrate that the finite range property is an unstable condition. For $\alpha$-CFs (namely, the space $\R$ with digits $\Z$, fundamental domain $K=(-\alpha, 1-\alpha]$, say with inversion $-1/x$, see Lemma \ref{lemma:realserendipity}) serendipity implies that the boundary points $\alpha$ and $1-\alpha$ have a finite orbit under the $\alpha$-CF mapping $T$. This then implies that $\alpha$ is the root  (rational or irrational) of a quadratic equation. In higher dimensions, embedding $\alpha$-CFs as a subsystem provides some negative results. In particular, the above real $\alpha$-CFs are a subsystem, along the imaginary axis, of the $\alpha$-perturbed A.~Hurwitz CF (with space $\R^2$, digits $\Z^2$, fundamental domain $(-.5,.5]\times(-\alpha, 1-\alpha]$, and $\iota(x,y)=(x,-y)/(x^2+y^2)$). When $\alpha$ is not a quadratic surd, it is easy (Corollary \ref{cor:aphahurwitz}) to conclude that the resulting system is not serendipitous. However, even with $\alpha$ a quadratic surd, serendipity does not follow without further assumptions in higher dimensions, and experimental evidence (Figure \ref{fig:planar offset}) suggests it may fail even for rational perturbations. It likewise remains an open question to find a serendipitous system in the non-Euclidean setting of the Heisenberg group, or prove that one does not exist.

\begin{figure}[ht]
    \centering
     \hfill{}
     \begin{subfigure}[b]{0.45\textwidth}
         \centering
        \includegraphics[width=\textwidth]{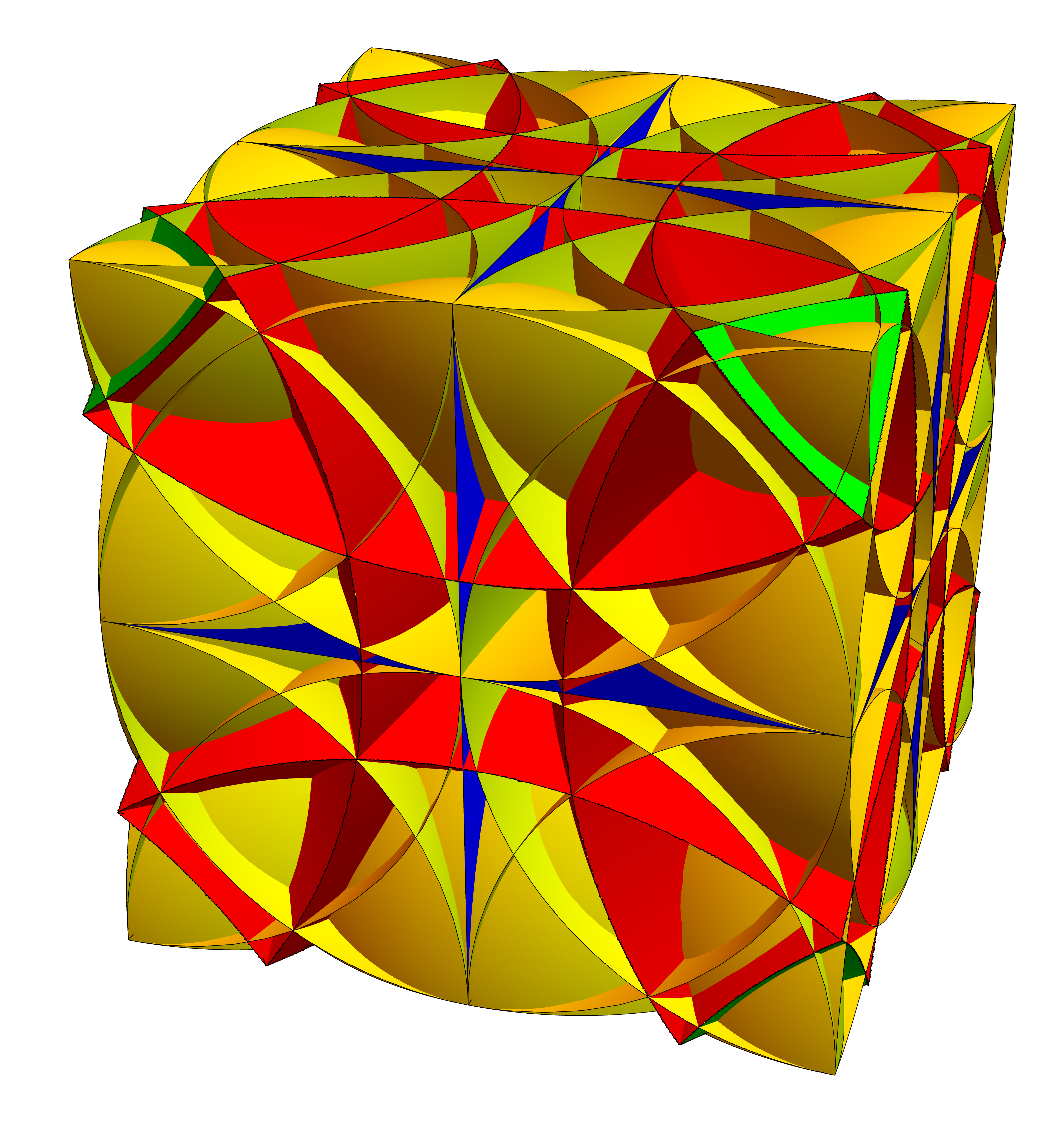}
         \caption{The cube.}
     \end{subfigure}
     \hfill{}
     \begin{subfigure}[b]{0.45\textwidth}
         \centering
        \includegraphics[width=\textwidth]{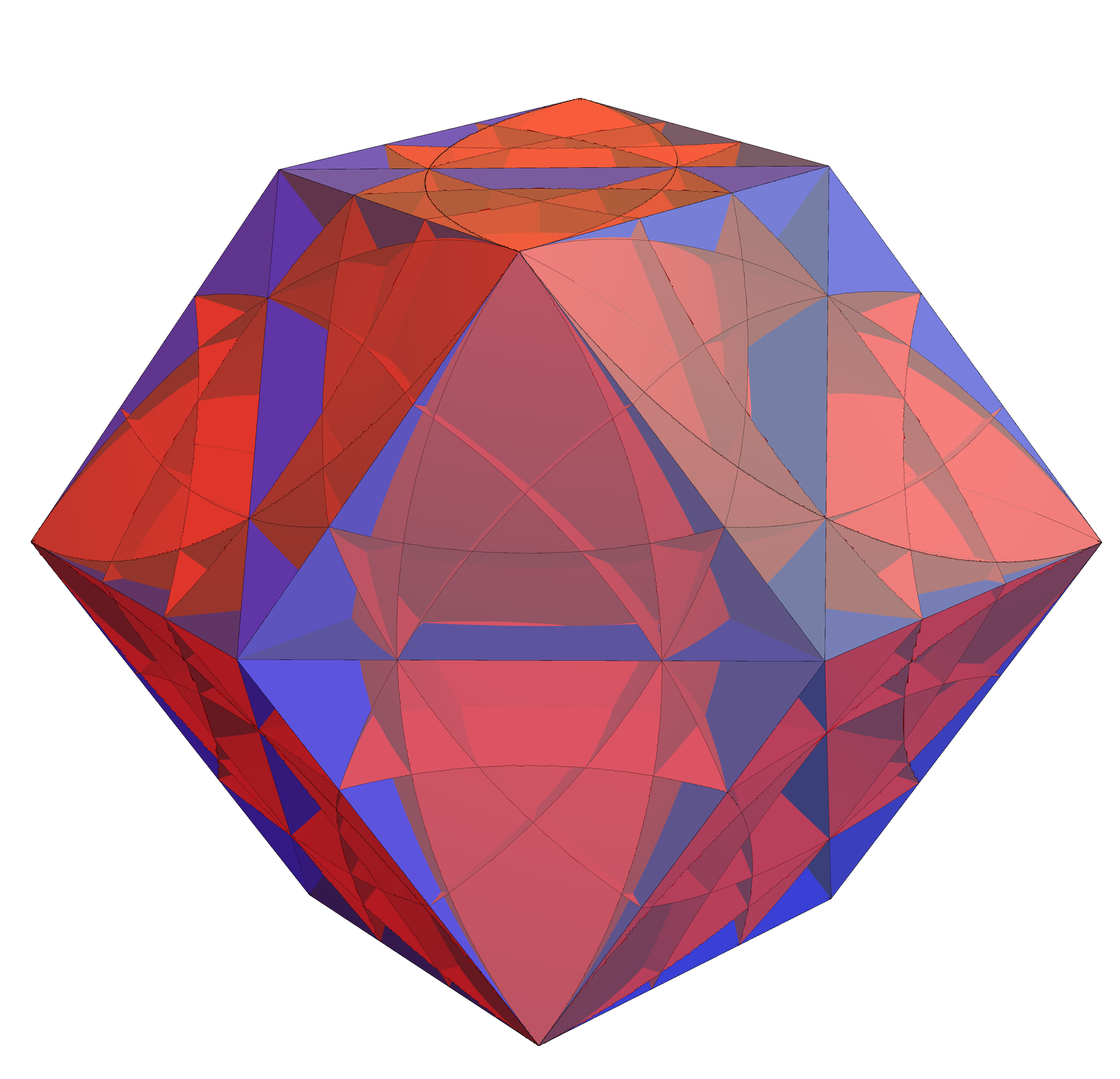}
         \caption{The rhombic dodecahedron.}
     \end{subfigure}
     \hfill{}
    \caption{Serendipitous decompositions of the cube and rhombic dodecahedron. The type 1-5 surfaces are, respectively: omitted (due to being the walls of the solid), blue, red, green, and yellow.}
    \label{fig:cube and dodecahedron}
\end{figure}

We will prove Theorem \ref{thm:main} using a combination of two methods, both relying on understanding the cylinder sets and the Jacobian $\omega_a$ of the inverse branches $T_a^{-1}$ of $T$. The first approach relies on black-box theorems of Nakada \cite{Nakada1976}, Nakada and Natsui \cite{NN}, and Schweiger \cite{Schweiger2000}, which ultimately rely on a direct measure-theoretic analysis of cylinder interactions. This provides us with exactness, CF-mixing, a Kuzmin-type theorem, and a piecewise-\emph{Lipschitz} invariant measure. To obtain piecewise-analyticity for the invariant measure, we turn to an argument of Hensley \cite{HensleyBook}, based on the work of Bandtlow--Jenkinson \cite{bandtlow2007invariant}, which ultimately relies on the theory of compact positive operators \cite{MR0181881}. The key idea here is to obtain compactness of the transfer operator on a restricted Banach space of functions that extend, in the appropriate sense, to holomorphic mappings on complexified neighborhoods of each piece of the serendipitous decomposition. This compactness is derived, ultimately, from the fact that $T$ is a uniformly expanding mapping.

\subsection{General Assumptions and Results}\label{sec:results}

Our general assumptions, making use of the framework of Iwasawa continued fractions \cite{lukyanenko_vandehey_2022}, are as follows. For our ambient space $X$, we will use $\R^d$, for $d\ge 1$; this includes the cases of the complex numbers $\mathbb{C}$ viewed as $\R^2$, the quaternions $\mathbb{H}$ viewed as $\R^4$, and the octonions $\mathbb{O}$ viewed as $\R^8$. For our inversion $\iota$, we will use a function $\iota:X\setminus\{0\}\to X\setminus\{0\}$ that satisfies  
\[
|\iota x| = \frac{1}{|x|} \qquad d(\iota x, \iota y) = \frac{d(x,y)}{|x||y|}, \label{eq:inversion formula}
\]
where $|\cdot|$ is the usual Euclidean norm and $d(\cdot,\cdot)$ is the usual Euclidean distance. Such inversions include the maps $z\mapsto 1/z$ in each of the division algebras, and the maps $x\mapsto x/\norm{x}^2$ in $\R^d$ for any $d$. Inversions are precisely the mappings of the form $\iota(x)=\mathcal O(x)/\norm{x}^2$ for some orthogonal mapping $\mathcal O$  (see Lemma 2.14 in \cite{lukyanenko_vandehey_2022}). We will assume for notational convenience that all inversions used in this paper are order-2, but this is not necessary for the results. For our lattice $\mathcal{Z}$, we take any discrete additive subgroup of $X$ with compact quotient. Finally for our continued fraction region $K$, we take the Dirichlet region
\(
K=\{x\in X \mst d(x,0)\leq d(x,z) \text{ for all } z\in \Zee\}
\)
of our lattice $\mathcal{Z}$, together with some choice of boundary so that for any $x\in X$, there is a unique element of $\mathcal{Z}$, denoted $[x]$, with $x-[x]\in K$. The above will be referred to as the general assumptions of the paper in any results that follow.

\begin{remark}
More broadly, the Iwasawa CF framework allows us to consider a non-Euclidean space $X$, a lattice $\Zee\subset \Isom(X)$, and any fundamental domain for $\Zee$. However, we will not work in this full generality because finite-range systems have not been identified in non-Euclidean settings.
\end{remark}

From the data $(X,\iota, \mathcal{Z},K)$, we can generate the continued fraction map $T:K\to K$ by 
\(
Tx = \begin{cases}
\iota x - [\iota x], & x\neq 0,\\
0, & x=0.
\end{cases}
\)
We can generate the continued fraction digits of $x$ by $a_i=a_i(x) = [\iota T^{i-1}x]$ provided $T^{i-1}x \neq 0$. Under very general conditions (e.g., the norm-Euclidean assumption listed below), the numbers
\(
\iota(a_1+\iota(a_2+\iota(a_3+\dots \iota(a_n) \dots )))
\)
will converge to $x$ (see \cite{lukyanenko_vandehey_2022}). The cylinder set $C_a$ for $a\in \mathcal{Z}$ is given by $C_a = K\cap \iota(K+a)$. If we let $T_a(x)= \iota(x)-a$ denote the injective restriction of $T$ to $C_a$, then we can  define $C_{a_1a_2 \dots a_n}$ by 
\(
C_{a_1a_2 \dots a_n} = T^{-1}_{a_1}T^{-1}_{a_2}\dots T^{-1}_{a_n} K.
\)
 Note that  $C_{a_1a_2 \dots a_n} = \emptyset$ is possible.

In this paper, we will be interested in lattices $\mathcal{Z}$ that satisfy the following properties:
\begin{defi}We will say a lattice $\Zee$ is:
\begin{itemize}
    \item \emph{integral} if $z\cdot z\in\mathbb{Z}$ for all $z\in\mathcal{Z}$. We will always treat $\mathbb{C}$ as $\mathbb{R}^2$, $\mathbb{H}$ as $\mathbb{R}^4$, and $\mathbb{O}$ as $\mathbb{R}^8$ for the purposes of calculating any dot product in this paper.
    \item \emph{unit-generated} if every $z\in \mathcal{Z}$ can be written as a sum of lattice points with norm $1$ (the units of the lattice).
    \item \emph{nicely invertible} with respect to $\iota$ if $|z|^2\iota(z)\in\mathcal{Z}$ for all non-zero $z\in\mathcal{Z}$, or equivalently $\mathcal O(\Zee)=\Zee$.
    \item \emph{norm-Euclidean} (and $K$ is \emph{proper}) if $\rad(K)<1$, where $\rad(K)=\sup\{\norm{x}:x\in K\}$.
    \item \emph{3-remote} if for any $z\in \mathcal{Z}$ of norm $\sqrt{3}$, we have that $d(z,K)\ge 1$,
    where $d(z,K):=\inf_{x\in K}d(z,x)$.
    \item \emph{7-remote} if for any $z\in\mathcal{Z}$ of norm $\sqrt{7}$, we have $d(z/2,K)\ge 1/2$
\end{itemize}
\end{defi}
Any lattice $\mathcal{Z}$ will be nicely invertible with respect to the inversion $\iota(x)=x/|x|^2$, so this condition is not onerous. The conditions of 3-remoteness and 7-remoteness are new to this paper, and arise from the analysis of cylinder sets and images of $\partial K$ under $T$.

Our main result will be the following.

\begin{thm}\label{thm:main}
Suppose that $X$ is $\R^d$, $\C$, $\mathbb{H}$, or $\mathbb{O}$, with a lattice $\Zee$ that is integral, unit-generated, norm-Euclidean, nicely invertible with respect to an inversion $\iota$, and either 3-remote or 7-remote. Let $K$ be the Dirichlet region for $\Zee$. Then:
\begin{enumerate}
    \item The Iwasawa CF given by $(X,\Zee,\iota,K)$ has the finite range property; that is, there is a finite collection of sets $U_1, \ldots, U_J\subset K$ such that any non-empty cylinder $C_{a_1a_2 \dots a_n}$ satisfies $T^{n}C_{a_1a_2 \dots a_n}=U_j \pmod 0$  for some $1\leq j\leq J$,
    \item The CF map $T$ is an exact endomorphism and continued fraction mixing with respect to a $T$-invariant measure $\mu$ that is equivalent to Lebesgue measure $\lambda$,
    \item The density $d\mu/d\lambda$ is piecewise analytic on the pieces of the partition generated by the sets $U_1,\ldots$, $U_J$, and
    \item $T$ satisfies a Kuzmin-type theorem (see \eqref{eq:GaussKuzmin}).
\end{enumerate}
\end{thm}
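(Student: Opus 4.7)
The plan is to prove the four conclusions in order, since each relies on the previous. Part (1), the finite-range / serendipity structure theorem, is the geometric heart of the argument; parts (2) and (4) are dynamical consequences extracted from the Nakada--Schweiger measure-theoretic machinery; and part (3), piecewise analyticity, follows from the Bandtlow--Jenkinson / Hensley transfer-operator approach.

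First I would establish (1) by proving serendipity and then invoking Lemma \ref{lemma:FRisSer} to convert it to the finite-range property. To show serendipity, I would track the orbit $\bigcup_k T^k \partial K$ of the boundary of the Dirichlet region. A point $x\in \partial K$ lies on the bisector between $0$ and some lattice point $z\in\Zee$, i.e.\ $|x|=|x-z|$. Using the nice-invertibility identity $|z|^2\iota(z)\in\Zee$, inverting this bisector condition and subtracting $[\iota x]$ places $Tx$ on a bisector between two lattice points whose ``separation type'' is controlled by $|z|$. The norm-Euclidean hypothesis $\rad(K)<1$ forces $|\iota x|>1$, so the geometry is dominated by lattice points of norm close to $1$. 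The $3$-remote and $7$-remote hypotheses are designed to rule out lattice points of the ``next available'' norms $\sqrt{3}$ and $\sqrt{7}$ from producing new bisector types under iteration, while integrality and unit-generation let one reduce arbitrary lattice contributions to sums of units. Iterating, the list of bisector types appearing among the $T^k \partial K$ closes after finitely many steps, and $K\setminus E$ has finitely many connected components because only finitely many bisectors contribute; this is the serendipitous decomposition.

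Next, with (1) in hand, I would deduce (2) and (4) by verifying the hypotheses of the theorems of Nakada \cite{Nakada1976}, Nakada--Natsui \cite{NN}, and Schweiger \cite{Schweiger2000}. The needed ingredients are: the finite-range property (just obtained); uniform expansion $|DT|\geq \rad(K)^{-2}>1$ (which follows from norm-Euclidean, since $D\iota$ at $x$ scales like $|x|^{-2}$ and $|x|\leq \rad(K)<1$ on $K$); and a R\'enyi / bounded-distortion estimate for the inverse-branch Jacobians $\omega_a$, a standard computation for the M\"obius-type inverse branches $T_a^{-1}(x)=\iota(x+a)$. The black-box results then deliver exactness, CF-mixing, existence and uniqueness of a $T$-invariant measure $\mu\ll \lambda$ with bounded piecewise-Lipschitz density, and the exponential convergence of iterated pushforwards of absolutely continuous measures to $\mu$, which is (4).

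Finally, for (3) I would follow Hensley's adaptation of Bandtlow--Jenkinson. Let $U_1,\dots,U_J$ be the pieces produced by (1), and let $\mathcal B$ be the Banach space of functions bounded and holomorphic on a fixed complex neighborhood $\widetilde U_j$ of each piece $U_j$. Define the transfer operator $Lf(x)=\sum_a \omega_a(x)\,f(T_a^{-1}x)$. Because the inverse branches are strict contractions (again via norm-Euclidean), they extend to strict contractions of the complexified neighborhoods into themselves, and the Jacobians $\omega_a$ are holomorphic there; the finite-range property is precisely what is needed to ensure these extensions assemble compatibly so that $L$ preserves $\mathcal B$. A Montel-type equicontinuity argument, in the style of \cite{MR0181881}, then shows $L$ is compact and positive on $\mathcal B$. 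The Perron leading eigenfunction is the invariant density $d\mu/d\lambda$, and since it lies in $\mathcal B$ it is automatically holomorphic, hence real-analytic, on each $U_j$, yielding (3). The main obstacle I would expect is part (1): verifying serendipity from the remoteness hypotheses really is a delicate case analysis of bisector orbits, and the paper's own discussion of unstable perturbations emphasizes how easily this can fail; once (1) is secured, the dynamical conclusions (2)--(4) are by now a well-tuned machine.
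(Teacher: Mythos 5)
Your proposal follows the paper's architecture closely: prove serendipity geometrically and convert to finite range via Lemma~\ref{lemma:FRisSer}, feed that into the Nakada--Schweiger fibred-systems black boxes for (2) and (4), and handle (3) via a complexified transfer operator in the Hensley/Bandtlow--Jenkinson style. This is the correct route, and your high-level description of (1) and (3) matches what the paper does.

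The gap is in the middle step. The list of ``needed ingredients'' you give for the black-box theorems — finite range, uniform expansion, and a R\'enyi/bounded-distortion estimate — is incomplete. The paper has to verify eight conditions (A)--(H), and while your list essentially covers (A)--(D) and (G), it omits (E), that each range set $U_j$ contains a full cylinder, and (F), a quantitative Lipschitz estimate $\left|\omega_s(T^n x)-\omega_s(T^n y)\right|\le R_1\,\lambda(C_s)\,d(T^n x, T^n y)$ for the iterated Jacobians. Neither follows from bounded distortion or expansion. Condition (E) needs its own geometric argument (Lemma~\ref{prop:All Uj contain full cylinders}): one must show every $U_j$ is a finite intersection of half-spaces and sphere-exteriors whose closure contains $0$, then construct a full cylinder inside $U_j$ by inverting and choosing one or two large digits. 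Condition (F) is the most delicate and is proved (Lemma~\ref{lemma:condition F}) by writing $\omega_s(T^n x) = \omega_{s'}(0)^{-1}\,d(r,x)^{2d}$ where $r=T^{-1}_{s'}(0)$ is the $(n-1)$-st rational approximate, then factoring the difference of $2d$-th powers and chaining together the earlier distortion estimates — the ``unexpected use of rational approximates'' the paper flags. Without (E) and (F) the CF-mixing and Kuzmin theorems do not apply, so these are not optional refinements. A smaller omission: your serendipity sketch needs the Conway--Sloane result (Lemma~\ref{lemma:ConwaySloane}) to guarantee that $\partial K$ lies only on bisectors of \emph{unit-norm} lattice points; without it, the reduction to a finite list of object types does not get off the ground.
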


Most of this theorem will be an immediate consequence of four black box theorems, Theorems \ref{thm:black box 1}--\ref{thm:black box 4}. There are eight conditions that must be satisfied for these theorems to apply, which will be dealt with in Lemmas \ref{lemma:condition A},  \ref{lemma:condition C},  \ref{lemma:condition D},  \ref{lemma:Proving finite range},  \ref{lemma:condition H},  \ref{prop:All Uj contain full cylinders},  \ref{lemma:condition F}, and \ref{lemma:condition G}. The piecewise analyticity of the invariant measure will require a separate argument, provided in Theorem \ref{thm:real-analytic}.

\subsection{Examples}\label{sec:examples}

There are several classical cases, as well as several new cases, which are covered by Theorem \ref{thm:main}. In particular, all of the following satisfy its conditions (with the exception of (9), which requires a slightly more careful argument provided in Section \ref{sec:Z3}):
\begin{enumerate}
    \item The lattice $\mathcal{Z}=\mathbb{Z}$ on $\mathbb{R}$, with either $\iota(x)=1/x$ or $\iota(x)=-1/x$. Both systems are referred to as nearest integer continued fractions \cite{Rieger}; as is the CF with the inversion $x\mapsto \norm{1/x}$, which is not covered by our results.
    \item The lattice $\mathcal{Z}=\mathbb{Z}[\ii]$ on $\mathbb{C}$. Together with $\iota(z)=1/z$, this generates the A. Hurwitz continued fractions \cite{Hurwitz}. 
\end{enumerate}
    For all division algebras below, we will continue to use the mapping $\iota(z)=1/z$; other choices such as $\iota(z)=1/\overline z$ or $\iota(z)=-1/z$ may also be available. We will also use the usual basis for these spaces over $\R$, namely $\{1, \ii\}$ for $\C$, $\{1, \ii, \jj,\kk\}$ for $\mathbb{H}$, and $\{1, e_1, \ldots, e_7\}$ for $\mathbb{O}$, where $\ii^2=\jj^2=\kk^2=e_1^2=\ldots=e_7^2=-1$. For more information on the quaternions and octonions, see \cite{ConwayBook}.
\begin{enumerate}[resume]
    \item The lattice $\mathcal{Z}=\mathbb{Z}[\frac{1+\sqrt{3}\ii}{2}]$ on $\mathbb{C}$. Although the associated continued fraction algorithm has not been given a name, it also appears in the work of Hurwitz \cite{Hurwitz}.
    \item The lattice $\mathcal{Z}=\mathcal{H}$, the Hurwitz integers \cite{Mennen}, on $\mathbb{H}$, where 
    \(
    \mathcal{H}=\mathbb{Z}\oplus \mathbb{Z}\ii\oplus \mathbb{Z}\jj\oplus \mathbb{Z}\frac{1+\ii+\jj+\kk}{2}
    \)
    \item The lattice
    \(
    \mathcal{Z}=\mathbb{Z}\oplus \mathbb{Z}\ii\oplus \mathbb{Z}\frac{1+\sqrt{3}\jj}{2}\oplus \mathbb{Z}\frac{\ii+\sqrt{3}\kk}{2},
    \) the Gausenstein integers, on $\mathbb{H}$ (see \cite{Yoshii}). 
    \item The lattice 
    \(
        \mathcal{Z}=\mathcal{C} &= \mathbb{Z}\oplus \mathbb{Z}e_1\oplus \mathbb{Z}e_2\oplus \mathbb{Z}e_3\\
        &\qquad \oplus \mathbb{Z}h\oplus \mathbb{Z}e_1 h \oplus \mathbb{Z}e_2 h\oplus \mathbb{Z}e_3 h ,
    \)
    with $h=(e_1+e_2+e_3-e_4)/2$,  on $\mathbb{O}$. The integers $\mathcal{C}$ are known as the Cayley integers, and we provide the generators given by Rehm \cite{Rehm}.
\end{enumerate}For the next three examples, we will work with the somewhat unusual setting of $\R^3$ (see the Iwasawa continued fractions \cite{lukyanenko_vandehey_2022}), with the inversion given by $\iota(x,y,z)=\frac{1}{x^2+y^2+z^2}(x,y,z)$. Variations on it, such as $\iota(x,y,z)=\frac{1}{x^2+y^2+z^2}(-x,y,z)$, may also fit our conditions.   See \cite{CoxeterBook} for more information on these lattices. 
\begin{enumerate}[resume]        
    \item \label{example:quaternions} The lattice
    \(
    \mathcal{Z} = \mathbb{Z}\left( \frac{1}{\sqrt{2}},\frac{1}{\sqrt{2}},0\right) \oplus \mathbb{Z}\left( \frac{1}{\sqrt{2}},-\frac{1}{\sqrt{2}},0\right)  \oplus \mathbb{Z}\left( \frac{1}{\sqrt{2}},0,\frac{1}{\sqrt{2}}\right)  
    \)
    on $\mathbb{R}^3$. The Dirichlet regions for this lattice form the rhombic dodecahedral honeycomb, see Figure \ref{fig:cube and dodecahedron}.
    \item The lattice
    \(
    \mathcal{Z} = \mathbb{Z}\left( 1,0,0\right) \oplus \mathbb{Z} \left( \frac{1}{2}, \frac{\sqrt{3}}{2}, 0\right) \oplus \mathbb{Z}\left( 0,0,1\right)
    \)
    on $\mathbb{R}^3$. The Dirichlet regions for this lattice form the hexagonal prism honeycomb.
    \item The lattice $\mathbb{Z}^3$ on $\mathbb{R}^3$. The Dirichlet regions for this lattice form the usual cubic honeycomb.
\end{enumerate}
It is straightforward to check that all of the lattices above are integral and unit-generated (in most cases the generating elements are given explicitly). We will show all these lattices are norm-Euclidean in Lemma \ref{lemma:example lattices are norm-Euclidean} and that lattices (1)--(8) are 3-remote in Proposition \ref{prop: example lattices are 3-remote}. The lattice $\mathbb{Z}^3$ on $\mathbb{R}^3$ is not 3-remote, as $(1/2,1/2,1/2)$ is in $K$ but $(1,1,1)$ is a lattice point of norm $\sqrt{3}$, but it is 7-remote (see Section \ref{sec:Z3}).

We comment briefly on why some lattices are not in the list above. In $\mathbb{C}$, we could have considered  the lattices
\(
\mathbb{Z}[\sqrt{2}\ii],\hspace{.1in} \mathbb{Z}\left[ \frac{1+\sqrt{7}\ii}{2}\right] ,\hspace{.1in} \mathbb{Z}\left[ \frac{1+\sqrt{11}\ii}{2}\right],\hspace{.02in}\text{ or }\hspace{.02in} \mathbb{Z}[\sqrt{3}\ii].
\)
The first three are norm-Euclidean, but the fourth is not; however, none of these are unit-generated. In $\mathbb{H}$, we note that the Hurwitz integers and Gausenstein integers are two of the three possible norm-Euclidean orders \cite{Chaubert}. The third norm-Euclidean order is 
\(
\mathbb{Z}\oplus \mathbb{Z}\frac{2+\sqrt{2}\ii-\sqrt{10}\kk}{4}\oplus \mathbb{Z}\frac{2+3\sqrt{2}\ii+\sqrt{10}\kk}{4}\oplus \mathbb{Z}\frac{1+\sqrt{2}\ii+\sqrt{5}\jj}{2},
\)
see \cite{Fitzgerald-other}. However, this one is again not unit-generated: one can confirm this by calculating all the points of norm 1 and seeing that all of them have zero $\jj$ component.

In $\mathbb{R}^3$, the lattice
\(
\mathbb{Z}(1,0,0)\oplus \mathbb{Z}(0,1,0)\oplus \mathbb{Z}\left(\frac{1}{2},\frac{1}{2},\frac{1}{2}\right),
\)
whose Dirichlet regions are truncated octahedrons, is not unit-generated.

That said, we have not ruled out the possibility that there are other lattices that fit our conditions.

\subsection{Open Problems}

In this paper we have restricted ourselves to look at the spaces $\mathbb{R}^n, \mathbb{C}, \mathbb{H}, \mathbb{O}$, because in these spaces, translation by $\mathcal{Z}$ and inversion by $\iota$ will map spheres and hyperplanes onto spheres and hyperplanes. However, in other Iwasawa inversion spaces, these actions may distort these shapes and so the method of our proofs may not apply. It is unknown if there is any lattice on the Heisenberg group (see \cite{LV2015}) for which the associated continued fraction algorithm has the finite range property.

While norm-Euclidean quaternionic lattices have been well studied, little appears to be known about octonionic lattices. There may be other octonionic continued fraction algorithms that fit into the framework of this paper. 

Two other methods are commonly used to prove ergodicity for regular CFs, and it remains an open problem to extend them to our setting. The natural extension of $T$ (used in \cite{kraaikamp1991new}) has a known invariant measure, but in higher dimensions it now has a complicated fractal domain of definition \cite{EINN,HV}, making it difficult to pass results to $T$. Alternately, the argument via hyperbolic geometry and geodesic coding on a modular manifold comes closer to succeeding, but is hindered by the existence of hidden symmetries (namely, the stabilizer of $\infty$ in the modular group $\langle \Zee, \iota\rangle$ can be larger than the starting digit group $\Zee$). This phenomenon even appears in the A. Hurwitz CFs. Ergodicity follows for folded variants of these systems; for centrally-symmetric systems, as the ones under consideration, one obtains a \emph{finite number} of ergodic components  \cite{lukyanenko_vandehey_2022}.

\subsection{Structure of the Paper and Notation}  In Section \ref{sec:black box}, we will introduce our black box theorems. In Section \ref{sec:examples proofs}, we will prove that our lattices (except lattice (9)) satisfy the remaining conditions needed for Theorem \ref{thm:main} to apply. In Section \ref{sec:finite range}, we prove that finite range and serendipity are equivalent under broad conditions, and use this to show several perturbed examples are not serendipitous. In Section \ref{sec:proofs}, we will prove most of Theorem \ref{thm:main} by showing that the conditions of it suffice to show the conditions of the various black box theorems.  In Theorem \ref{sec:Z3}, we return to the case of lattice (9) and show that it satisfies the conclusion of Theorem \ref{thm:main} with weaker conditions. Finally in Section \ref{sec:real analytic}, we prove the final part of Theorem \ref{thm:main}, on the piecewise analyticity of the invariant measure.

We will frequently make use of asymptotic notations in this paper. By $f(x)=O(g(x))$  we mean that there exists some $C>0$ with  $|f(x)|\le C |g(x)|$ for all relevant $x$. We also write $f(x)\asymp g(x)$ if both $f(x)= O(g(x))$ and $g(x) = O(f(x))$ simultaneously. In more general contexts, we interpret $O(g(x))$ as referring to the set of functions $f(x)$ with $f(x) = O(g(x))$ and any equal signs are interpreted from left to right as either $\in$ or $\subset$ as appropriate. Thus, $\sin x = x+O(x^3)$ means that $\sin x$ belongs to the set of functions $x+O(x^3)$ (for $x$ small), and $(1+O(x))^{-1} = 1+O(x)$ means that all functions belonging to $(1+O(x))^{-1}$ also belong to $1+O(x)$ (for $x$ small).
\subsection{Acknowledgements}We would like to thank Martha Hartt for her comments on the paper.

\section{Preliminary Facts}
We start by recalling the black-box theorems about fibred systems in Section \ref{sec:black box}, and by connecting our examples to our Main Theorem \ref{thm:main} in Section \ref{sec:examples proofs}.

\subsection{Black Box Theorems}\label{sec:black box}

We now adapt general theorems about fibred systems to our setting, using the continued fraction notation described in Section \ref{sec:intro}.

Let $T_{a_1a_2\dots a_n} = T_{a_n}T_{a_{n-1}}\dots T_{a_2}T_{a_1}$ be the restriction of $T^n$ to the cylinder set $C_{a_1 \dots a_n}$ and let $\omega(a_1,\dots, a_n)$ be the Jacobian derivative of $T^{-1}_{a_1a_2\dots a_n}$, given by
\(
\omega(a_1,\dots,a_n)(x) = |\det DT^{-1}_{a_1a_2\dots a_n} x|, 
\)
satisfying, for any Borel set $E\subset K$,
\(
\int_E \omega(a_1,\dots, a_n) \ d\lambda = \int_{T^{-n}E \cap C_{a_1a_2\dots a_n}} d\lambda.
\)
For readability, we will often let $s$ denote the string $a_1a_2\dots a_n$, and then write $T_s$ for $T_{a_1\dots a_n} $, and $\omega_s$ for $\omega(a_1,\dots , a_n)$. 

We will work with the following conditions:

\begin{enumerate}
    \item[(A)] For each digit $a$ with $C_a$ non-empty, $T_a:C_a\to K$ is a one-to-one, continuous map with continuous first order partial derivatives and $\det DT_a\neq 0$.
    \item[(B)] The finite range property is satisfied. That is, there exist a finite number of positive-measure subsets $U_1, U_2,\dots, U_J$ of $K$ such that for each nonempty $C_{s}$, we have that $T^{|s|} C_{s}= U_j$ for some $j$. This equality may hold up to measure zero. \\
    We shall denote by $\mathcal{F}$ the partition of $K$ generated by the $U_j$'s, and refer to elements of $\mathcal{F}$ as \emph{cells}.
    \item[(C)] R{\'e}nyi's condition is satisfied: there is a uniform constant $L>1$ such that for all strings $s$, if $T^{|s|} C_{s } = U_j$ for some $j$, then 
    \[\label{eq: Renyis condition}
    \sup_{x\in U_j} \omega_s(x) \le L \inf_{x\in U_j} \omega_s(x).
    \]
    \item[(D)] Cylinders uniformly  shrink to $0$ in diameter as the number of digits increases. That is, letting 
    \(
    \sigma(m):=\sup_{|s|=m} \operatorname{diam} C_{s},
    \) 
    we have $\lim_{m\to \infty} \sigma(m)= 0$.
    \item[(E)] Each $U_j$ contains a full cylinder.
    \item[(F)] There is a constant $R_1>0$ such that for every finite digit sequence $s$ with $n=|s|$ and all $x,y\in  C_{s}$ we have
    \(
    \left|\omega_s(T^n x)- \omega_s(T^n y)\right|\le R_1 \lambda(C_{s})d\left( T^n x,T^n y\right).
    \)
\item[(G)]There is a constant $R_2>0$ such that for every $s$ with $n=|s|$ and all $x,y\in  C_{s}$ we have
    \(
    d( x ,  y) \le R_2 d(T^n x,T^n y).
    \)
    \item[(H)] Let $\mathcal{L}_m=\{s:|s|=m \text{ and }C_{s} \text{ is not contained in a cell } F\in \mathcal{F}\}$ and  $\gamma(m)=\sum_{s\in\mathcal{L}_m} \lambda(C_{s})$. We have $\lim_{m\to \infty} \gamma(m) = 0$.
\end{enumerate}

With these, we have the following.

\begin{thm}[Theorem 2 and 3 in \cite{Nakada1976}] \label{thm:black box 1}
Under conditions (A)--(E), there exists a unique probability measure $\mu$ on $K$ that is $T$-invariant and equivalent to the Lebesgue measure. Furthermore, $\mu(A) \asymp \lambda(A)$. $T$ is also an exact endomorphism with respect to $\mu$, and thus $T$ is mixing of all orders and ergodic.
\end{thm}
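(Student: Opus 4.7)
The plan is to follow the standard fibered-system approach: derive a distortion inequality on cylinders, construct the invariant measure via the transfer operator, and deduce exactness from a Lebesgue-density argument.

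First I would convert the R\'enyi condition (C) into a measure-theoretic R\'enyi inequality. The change-of-variables formula $\lambda(C_s)=\int_{U_j}\omega_s\,d\lambda$ for $T^{|s|}C_s=U_j$, combined with (C), gives $\lambda(C_s)\asymp \omega_s(x)\lambda(U_j)$ uniformly in $x\in U_j$, whence for any measurable $E\subset U_j$ and $n=|s|$,
$$\frac{\lambda(C_s\cap T^{-n}E)}{\lambda(C_s)}\asymp \frac{\lambda(E)}{\lambda(U_j)}.$$
Next I would analyze the Perron--Frobenius operator $\mathcal{L}f(x)=\sum_a \omega_a(x) f(T_a^{-1}x)$, summed over $a$ with $x\in T(C_a)$. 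The R\'enyi inequality above yields a uniform $L^\infty$-upper bound on $\mathcal{L}^n 1$; condition (E) (each $U_j$ contains a full cylinder, so every $x\in K$ eventually lies in the image of a full branch) provides a matching positive lower bound. The Ces\`aro averages $\tfrac{1}{N}\sum_{n<N}\mathcal{L}^n 1$ are then weak-$*$ precompact in $L^\infty$, and any limit point $h$ satisfies $\mathcal{L}h=h$ and $h\asymp 1$, giving the invariant measure $d\mu=h\,d\lambda$ with $\mu(A)\asymp \lambda(A)$.

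For exactness, the R\'enyi inequality upgrades to $\mu(B\cap C_s)/\mu(C_s)\asymp \mu(B\cap U_j)/\mu(U_j)$ for any measurable $B\subset K$ and cylinder $C_s$ of rank $n$ with $T^nC_s=U_j$. Given $B$ in the tail $\sigma$-algebra $\bigcap_n T^{-n}\mathcal{B}$, write $B=T^{-n}B_n$ and apply this identity with $B_n$ in place of $B$; condition (D) guarantees that the rank-$n$ cylinder partitions refine to points in diameter, so Lebesgue--Radon--Nikodym identifies the left-hand side as the local density of $B$ at a.e.\ point of $C_s$. Since that density depends only on the cell $U_{j(x)}$ through the ratio $\mu(B_n\cap U_j)/\mu(U_j)$, and the collection $\{U_j\}$ is finite, a pigeonhole / iteration across cells forces the density to be a single constant, so $\mu(B)\in\{0,1\}$. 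Ergodicity, mixing of all orders, and uniqueness of $\mu$ are standard consequences of exactness.

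The hard part will be the exactness step, where one must stitch together the R\'enyi inequality, the shrinkage of cylinders from (D), and the finiteness of cells from (B) into a clean pigeonhole that pins the tail conditional density to a single constant across all cells. As this is precisely the program executed in \cite{Nakada1976}, I would in practice treat the theorem as a black box and cite that paper for the full technical details.
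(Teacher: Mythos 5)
The paper does not prove this statement at all: it simply cites it as Theorems 2 and 3 of Nakada (1976), adding only the remark that the paper's slightly different version of condition (E) (each $U_j$ contains a full cylinder) still suffices by appeal to Schweiger (2000). Your sketch is a fair high-level outline of the R\'enyi/transfer-operator/density argument that Nakada carries out, and since you conclude by saying you would cite Nakada as a black box, your approach matches the paper's exactly --- just be sure to also record the Schweiger justification that the modified form of (E) is adequate.
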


Note that the above theorem as it appears in \cite{Nakada1976} uses a different version of condition (E), but our version suffices as seen in \cite{Schweiger2000}.

\begin{thm}[Theorem 2 in \cite{NN}]
Under conditions (A)--(H), the mapping $T$ is continued fraction mixing. That is, define 
\(
\psi(m) = \sup\frac{|\mu(C_s\cap T^{-|s|-m}E) - \mu(C_s)\mu(E)|}{\mu(C_s)\mu(E)}, \quad m\in\mathbb{N},
\)
where $\mu$ is the invariant measure from Theorem \ref{thm:black box 1} and the supremum is taken over all positive-measure cylinder sets $C_s$ and Borel sets $E$ in $K$. Then $\sup_{m\geq 1} \psi(m)<\infty$ and $\lim_{m\to \infty} \psi(m) =0$. 
\end{thm}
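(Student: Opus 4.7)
The plan is to translate the $\psi$-mixing condition into a statement about the transfer operator $\mathcal{L}$, defined by $(\mathcal{L}f)(y) = \sum_a f(T_a^{-1}y)\, \omega_a(y)\, \mathbf{1}_{T(C_a)}(y)$, and then establish enough regularity of $\mathcal{L}^m$ on a suitable function space to force exponential decay of correlations. As a first reduction, write $h = d\mu/d\lambda$, which is bounded and bounded below by Theorem~\ref{thm:black box 1}, and change variables via the injective branch $T_s^{-1}\colon U_{j(s)} \to C_s$ to obtain
\(
\mu\bigl(C_s \cap T^{-|s|-m}E\bigr) = \int_{U_{j(s)}} h\bigl(T_s^{-1}y\bigr)\,\omega_s(y)\,\mathbf{1}_E(T^m y)\, d\lambda(y).
\)
Rényi's condition (C) lets me replace $h(T_s^{-1}y)\,\omega_s(y)$ by the constant $\mu(C_s)/\lambda(U_{j(s)})$ with bounded multiplicative error. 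The problem therefore reduces to controlling, uniformly over the finitely many cells $U_j$ produced by (B), the quantity
\(
\Delta_j(m, E) := \frac{1}{\lambda(U_j)}\int_{U_j} \mathbf{1}_E(T^m y)\, d\lambda(y) - \mu(E).
\)

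Next, I would express $\Delta_j(m,E)$ through the transfer operator via duality: $\int_{U_j}\mathbf{1}_E(T^m y)\,d\lambda = \int_E \mathcal{L}^m(\mathbf{1}_{U_j})\, d\lambda$ and $\mu(E) = \int_E h\, d\lambda$. So it suffices to show that $\mathcal{L}^m(\mathbf{1}_{U_j}) \to \lambda(U_j)\, h$ in $L^1$ with a rate going to zero. Conditions (F) and (G) are tailored to give a Lasota--Yorke-type inequality on functions that are Lipschitz on each cell of $\mathcal{F}$: (G) provides uniform contraction of inverse branches at rate $R_2^{-1}<1$, and (F) controls how much new variation $\mathcal{L}$ can create, yielding $\|\mathcal{L}f\|_{\mathrm{Lip},\mathcal{F}} \le \theta \|f\|_{\mathrm{Lip},\mathcal{F}} + C\|f\|_\infty$ for some $\theta < 1$. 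Iterating gives quasi-compactness of $\mathcal{L}$ on this Banach space, hence a spectral gap, hence $\|\mathcal{L}^m(\mathbf{1}_{U_j}) - \lambda(U_j) h\|_{L^1} = O(\rho^m)$ for some $\rho < 1$ against Lipschitz-on-each-cell test functions.

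The remaining task is to upgrade from Lipschitz-on-cells test functions to the arbitrary Borel $E$ appearing in $\psi(m)$. This is where condition (H) enters: letting $\mathcal{L}_m$ denote the rank-$m$ cylinders straddling a cell boundary, the total Lebesgue mass of those "bad" cylinders is $\gamma(m) \to 0$. For any Borel $E$ I would approximate $\mathbf{1}_E$ by the function $f_m$ that is piecewise-constant on rank-$m$ cylinders, equal to the Lebesgue average of $\mathbf{1}_E$ on each; this $f_m$ is constant on all good cylinders, hence Lipschitz-on-cells with controlled seminorm, while $\|\mathbf{1}_E - f_m\|_{L^1} = O(\gamma(m))$. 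Applying the transfer operator decay to $f_m$ (or rather to its lift across $m$ iterates) and absorbing the approximation error yields $\Delta_{j}(m,E) \to 0$ uniformly in $j$ and $E$, which gives $\psi(m) \to 0$; the uniform bound $\sup_m \psi(m) < \infty$ follows from the same estimates together with the two-sided bound $\mu(C_s) \asymp \lambda(C_s)$ already provided by Theorem~\ref{thm:black box 1}.

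The main obstacle, I expect, is choosing the right Banach space for the Lasota--Yorke argument: the natural candidate is functions Lipschitz on each cell of $\mathcal{F}$, but one must check that $\mathcal{L}$ stabilizes this space --- in other words, that inverse branches $T_a^{-1}$ never carry a cell interior across a cell boundary. This is exactly the content of the finite range property (B): each $T_a^{-1}$ maps each $U_j$ into a single cylinder $C_{a,s'}$ that lies inside a cell, so the decomposition is respected by all iterates. Conditions (F)--(H) then combine with (B) to give a clean spectral picture, and the rest is bookkeeping.
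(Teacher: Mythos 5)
Your route via a transfer-operator Lasota--Yorke inequality and a spectral gap on a piecewise-Lipschitz space is genuinely different from the one the paper invokes: the paper treats this as a black box, citing Theorem~2 of Nakada--Natsui, whose argument is a direct measure-theoretic estimate on cylinder intersections using (C)--(H), not a spectral-gap argument. Your approach is a reasonable modern alternative, but as written it has a real gap in the reduction step.

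The problem is the sentence ``R\'enyi's condition (C) lets me replace $h(T_s^{-1}y)\,\omega_s(y)$ by the constant $\mu(C_s)/\lambda(U_{j(s)})$ with bounded multiplicative error.'' The R\'enyi constant $L$ and the $\sup/\inf$ ratio of $h$ bound the multiplicative error by some fixed $M>1$ \emph{uniformly over $s$, independently of $m$}; they do not shrink as $m\to\infty$, nor as $|s|\to\infty$ (the sup/inf ratio of $\omega_s$ over $U_j$ does not tend to $1$). Carrying this error through, you only obtain
\(
\mu(C_s\cap T^{-|s|-m}E)=\mu(C_s)\bigl(\mu(E)+\Delta_{j(s)}(m,E)\bigr)\cdot\bigl(1+O(M-1)\bigr),
\)
and even if $\Delta_j(m,E)\to 0$ uniformly, this yields $\psi(m)=O(M-1)+o(1)$, i.e.\ only $\sup_m\psi(m)<\infty$, not $\psi(m)\to 0$. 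The reduction discards exactly the information needed for the vanishing half of the theorem.

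The fix is to not replace $g_s(y):=h(T_s^{-1}y)\,\omega_s(y)\,\mathbf{1}_{U_{j(s)}}(y)$ by its average but to feed it directly into the spectral-gap machinery. Note $\int g_s\,d\lambda=\mu(C_s)$, and conditions (F) and (G), together with the fact that $h$ is bounded above/below and Lipschitz on each cell (the Schweiger black box), show that $g_s/\mu(C_s)$ lies in your Banach space with Lipschitz-on-cells norm bounded \emph{uniformly over all $s$}: (F) bounds the oscillation of $\omega_s$ by $R_1\lambda(C_s)\asymp\mu(C_s)$ times distance, and (G) makes $h\circ T_s^{-1}$ Lipschitz with constant $\le R_2\,\mathrm{Lip}(h)$; Lemma~\ref{lem:Gij containment} (a consequence of serendipity, not of (B) alone) guarantees $T_s^{-1}$ respects the cell decomposition. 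Then duality gives
\(
\mu(C_s\cap T^{-|s|-m}E)=\int_E \mathcal L^m g_s\,d\lambda,
\)
and the spectral gap applied to $g_s/\mu(C_s)$ yields $\|\mathcal L^m g_s-\mu(C_s)h\|_{L^1}=O(\rho^m\mu(C_s))$ with constants independent of $s$, whence $\psi(m)=O(\rho^m)$ uniformly over $s$ and Borel $E$; this also makes the later appeal to (H) and the cylinder-average approximation of $\mathbf{1}_E$ unnecessary, since the $\mathbf{1}_E$ factor sits harmlessly in the $L^\infty$ slot of the pairing.
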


\begin{thm}[Theorem 1 in \cite{Schweiger2000}]
Under conditions (A)--(H), there is a version $h$ of the invariant density $d\mu/d\lambda$, which is Lipschitz continuous on any cell $F\in\mathcal{F}$.
\end{thm}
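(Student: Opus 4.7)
The plan is to analyze the Perron--Frobenius transfer operator $\mathcal{L}$ associated to $T$, show that its Ces\`aro averages $h_n := \frac{1}{n}\sum_{k=0}^{n-1}\mathcal{L}^k\mathbf{1}_K$ are uniformly Lipschitz on each cell $F\in\mathcal{F}$, and then extract a uniform subsequential limit as the desired Lipschitz representative of $d\mu/d\lambda$.

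Recall that the iterates act by $(\mathcal{L}^k f)(x) = \sum_{|s|=k}\omega_s(x)\,f(T_s^{-1}x)\,\mathbf{1}_{T^k C_s}(x)$, so
\[
(\mathcal{L}^k\mathbf{1}_K)(x) = \sum_{s:\,|s|=k,\,x\in T^k C_s}\omega_s(x).
\]
By the finite range property (B), for any cell $F\in\mathcal{F}$ and rank-$k$ cylinder $C_s$, one has either $F\subset T^k C_s$ or $F\cap T^k C_s = \emptyset$ modulo a null set, so the index set of nonzero terms is constant across $F$. Condition (F) then yields, for $x,y\in F$,
\[
\bigl|(\mathcal{L}^k\mathbf{1}_K)(x) - (\mathcal{L}^k\mathbf{1}_K)(y)\bigr| \le R_1\,d(x,y)\sum_{s:\,F\subset T^k C_s}\lambda(C_s) \le R_1\,\lambda(K)\,d(x,y),
\]
using disjointness of rank-$k$ cylinders. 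This estimate is uniform in $k$, so each $h_n$ is Lipschitz on every cell with the same constant $R_1\lambda(K)$. Since $\int_K h_n\,d\lambda = \lambda(K)$ for all $n$, the Lipschitz bound further forces $\{h_n|_F\}_n$ to be pointwise uniformly bounded on each cell.

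By Arzel\`a--Ascoli applied cell-by-cell and diagonalizing over the finitely many cells of $\mathcal{F}$, pass to a subsequence $h_{n_k}$ converging uniformly on every $F\in\mathcal{F}$ to some $h_\infty$ that is Lipschitz on each cell. The uniform convergence gives $L^1(\lambda)$ convergence, and since $\mathcal{L}h_n - h_n = \tfrac{1}{n}(\mathcal{L}^n\mathbf{1}_K - \mathbf{1}_K)$ has $L^1$-norm at most $2\lambda(K)/n \to 0$, continuity of $\mathcal{L}$ on $L^1$ forces $\mathcal{L} h_\infty = h_\infty$. Thus $h_\infty$ is an invariant density, and by uniqueness from Theorem \ref{thm:black box 1} together with $\int h_\infty\,d\lambda = \lambda(K)$, the function $h := \lambda(K)^{-1}h_\infty$ is a version of $d\mu/d\lambda$ that is Lipschitz on every cell.

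The main obstacle is the uniform-in-$k$ Lipschitz estimate, which hinges on the finite range property (B) to keep the summation index set constant on each cell, combined with the distortion control (F) applied to each cylinder. The remaining ingredients---Arzel\`a--Ascoli extraction, identification of the $L^1$-limit as the unique invariant density via Theorem \ref{thm:black box 1}---are standard; the only delicate technical point is managing the measure-zero discrepancies inherent in the ``mod $0$'' form of (B), which amounts to modifying each $\mathcal{L}^k\mathbf{1}_K$ on a null set before passing to the uniform subsequential limit.
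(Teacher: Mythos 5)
The paper does not prove this theorem --- it is invoked as a black-box result from Schweiger (2000), so there is no in-paper argument to compare against. On its own merits, your proof is correct and follows the standard Lasota--Yorke-style approach: the finite range property (B) fixes the summation index set on each cell, condition (F) gives termwise Lipschitz control of $\omega_s$, and summing over cylinders yields a uniform Lipschitz constant $R_1\lambda(K)$ for the Ces\`aro averages; Arzel\`a--Ascoli then supplies a Lipschitz-on-cells fixed point. Two small gaps are worth filling in explicitly. First, the display estimating $|(\mathcal{L}^k\mathbf{1}_K)(x)-(\mathcal{L}^k\mathbf{1}_K)(y)|$ treats this as a sum over infinitely many cylinders, so you need to establish that $\sum_s \omega_s(x)$ converges at each $x\in F$ before rearranging; this follows either from a.e.\ finiteness (since $\int_K \mathcal{L}^k\mathbf{1}_K\,d\lambda = \lambda(K)$) combined with the termwise Lipschitz bound and the bounded diameter of $F$, or more directly from R\'enyi's condition (C). Second, "uniqueness from Theorem \ref{thm:black box 1}" is slightly informal: that theorem asserts uniqueness among invariant measures \emph{equivalent} to $\lambda$, whereas $h_\infty\,d\lambda$ is only visibly nonzero, nonnegative, invariant, and absolutely continuous with respect to $\lambda$. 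You should additionally invoke ergodicity (a consequence of the stated exactness) to conclude that every such invariant measure is a scalar multiple of $\mu$, from which the identification and normalization follow. With these two points made explicit, the argument is complete.
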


For the purposes of our paper, the previous result is superseded by Theorem \ref{thm:real-analytic}, but we include it here for completeness.

\begin{thm}[Theorem 2 in \cite{Schweiger2000}] \label{thm:black box 4}
Let $A:L^1(\lambda)\to L^1(\lambda)$ be the transfer operator given by
\(
\int_{T^{-1}E} f \ d\lambda = \int_E (Af) \ d\lambda
\)
for $f\in L^1(\lambda)$. Let $\mathcal{L}$ be the class of functions $f:(0,\infty)\to(0,\infty)$ that are bounded away from both $0$ and $\infty$ and which are Lipschitz continuous on each cell $F$ of $\mathcal{F}$. Then under conditions (A)--(H), there is a constant $\alpha$ with $0<\alpha<1$ such that for any $f\in \mathcal{L}$, we have that
\[\label{eq:GaussKuzmin}
A^n f = \left(\int_K f \ d\lambda\right)h+O\left( \alpha^{\sqrt{n}}+\sigma(\sqrt{n})+\gamma(\sqrt{n})\right),
\]
where $h$ is as in the previous theorem, $\sigma$ is from condition (D), and $\gamma$ is from condition (H).
\end{thm}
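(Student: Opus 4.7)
The plan is to combine a Lasota-Yorke-type inequality on an appropriate Banach space of piecewise-Lipschitz functions with a careful splitting $n = m+k$ where $m = \lfloor \sqrt{n} \rfloor$ and $k = n-m$; the square root appearing in the error is the natural balance point between two competing error sources.

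First, I would introduce the Banach space $\mathcal{B}$ of bounded functions that are Lipschitz on each cell $F \in \mathcal{F}$, normed by $\|f\|_{\mathcal{B}} = \|f\|_\infty + \max_{F \in \mathcal{F}} \operatorname{Lip}_F(f)$. Writing
\(
A^n f(x) = \sum_{|s|=n} \omega_s(x)\, f(T_s^{-1}x)\, \chi_{T^n C_s}(x),
\)
I would use condition (G) to show that $f \circ T_s^{-1}$ has Lipschitz constant on $U_j = T^n C_s$ bounded by $R_2^{-n}\operatorname{Lip}(f)$, condition (F) to bound the Lipschitz constant of $\omega_s$, Rényi's condition (C) for uniform distortion of $\omega_s$ on each $U_j$, and the finite-range property (B) to have only finitely many possible image sets $U_j$. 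Summing these contributions across all strings of a given length, and using the fact that $\sum_{|s|=n}\int\omega_s \, d\lambda = 1$, one obtains a Doeblin-Fortet inequality of the form
\(
\|A^k f\|_{\mathcal{B}} \leq C_1 R_2^{-k}\|f\|_{\mathcal{B}} + C_2\|f\|_{L^1}.
\)

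Next, I would apply the Ionescu-Tulcea-Marinescu quasi-compactness theorem to conclude that $A$ is quasi-compact on $\mathcal{B}$ with essential spectral radius strictly less than $1$. Since Theorem \ref{thm:black box 1} guarantees ergodicity and uniqueness of the invariant density $h$, the eigenvalue $1$ is simple with eigenfunction $h$, and all other spectrum lies in a disk of radius $\alpha < 1$. This yields the exponential decay
\(
\|A^k f - \bigl(\textstyle\int_K f\, d\lambda\bigr)h\|_{\mathcal{B}} \leq C\,\alpha^k \|f\|_{\mathcal{B}}
\)
for all $f \in \mathcal{L} \subset \mathcal{B}$.

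Finally, to obtain the stated error form I would apply $A^m$ to the residual $g = A^k f - (\int f\,d\lambda)h$ (with $k = \sqrt{n}$). Writing $A^m g(x) = \sum_{|s|=m} \omega_s(x)\, g(T_s^{-1}x)\, \chi_{T^m C_s}(x)$, I split the length-$m$ strings into the \emph{good} strings for which $C_s$ lies inside a single cell of $\mathcal{F}$ and the \emph{bad} strings $s \in \mathcal{L}_m$. Good strings contribute an oscillation error of order $\sigma(m)$ (by the Lipschitz bound on $g$ combined with diameter control on $C_s$), while the total mass of bad strings contributes at most $\gamma(m)$ to the $L^\infty$ error, using the distortion bound (C) to convert the Lebesgue-measure bound $\sum_{s \in \mathcal{L}_m} \lambda(C_s) = \gamma(m)$ into a supremum error. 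Choosing $m = k = \sqrt{n}$ yields the claimed bound $\alpha^{\sqrt{n}} + \sigma(\sqrt{n}) + \gamma(\sqrt{n})$.

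The main obstacle is establishing the quasi-compactness in Step 2, since verifying the compactness of the embedding and the Doeblin-Fortet bound simultaneously requires carefully synthesizing conditions (B), (C), (F), and (G), and invoking the uniqueness of $h$ from Theorem \ref{thm:black box 1} to rule out additional peripheral eigenvalues. The subsequent bookkeeping between the Lipschitz and $L^\infty$ norms in Step 3 is also delicate, but is a standard feature of Kuzmin-type arguments once the spectral gap is in hand.
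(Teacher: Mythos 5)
The paper does not prove this theorem at all---it quotes it as a black box from Schweiger (Theorem 2 of \cite{Schweiger2000}), so there is no ``paper's proof'' to compare against; your proposal is an attempt to reconstruct Schweiger's argument from scratch.

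There is a genuine gap in Step~2 that undermines the whole strategy. The transfer operator does \emph{not} preserve the space $\mathcal{B}$ of functions Lipschitz on each cell $F\in\mathcal{F}$. Writing $A^n f(x)=\sum_{|s|=n}\omega_s(x)\,f(T_s^{-1}x)\,\chi_{T^nC_s}(x)$, the inverse branch $T_s^{-1}$ maps $U_j=T^nC_s$ onto $C_s$; but a cylinder $C_s$ is in general \emph{not} contained in a single cell of $\mathcal{F}$ (that is precisely what the set $\mathcal{L}_m$ and the quantity $\gamma(m)$ in condition (H) measure). Whenever $C_s$ straddles a cell boundary, the composition $f\circ T_s^{-1}$ is discontinuous in the interior of $U_j$, so $A^n f$ picks up new discontinuities that are not located on $\cup_j \partial U_j$. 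Consequently $A:\mathcal{B}\not\to\mathcal{B}$, no Doeblin--Fortet inequality of the form $\|A^k f\|_{\mathcal{B}}\le C_1 R_2^{-k}\|f\|_{\mathcal{B}}+C_2\|f\|_{L^1}$ can be established on this space, and the Ionescu-Tulcea--Marinescu machinery cannot be invoked. (There is also a smaller slip: condition (G) gives a uniform Lipschitz constant $R_2$ for $T_s^{-1}$, not a decaying $R_2^{-n}$; contraction of branches comes from condition (D), not (G).)

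A related inconsistency is that your Steps 2 and 3 contradict each other. If Step~2 really did give $\|A^k f-(\int f\,d\lambda)h\|_{\mathcal{B}}\le C\alpha^k\|f\|_{\mathcal{B}}$, you could take $k=n$ and obtain a clean exponential bound $O(\alpha^n)$, making Step~3 pointless (it only adds error). The fact that the theorem asserts the strictly weaker bound $O(\alpha^{\sqrt{n}}+\sigma(\sqrt{n})+\gamma(\sqrt{n}))$ is the signature of the obstruction above: Schweiger's actual proof never establishes a spectral gap on a piecewise-Lipschitz space. Instead it approximates $A^m f$ directly by constants on cylinders, tracks the oscillation error via the cylinder diameter $\sigma(m)$, tracks the ``bad'' mass from straddling cylinders via $\gamma(m)$, and then closes the argument by a measure-theoretic iteration over the good cylinders, balancing these error sources at $m\approx\sqrt{n}$. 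Your Step~3 has some of the right flavor (the good/bad string split, the use of $\sigma$ and $\gamma$), but it needs to be the \emph{entire} argument, not a postprocessing step attached to a spectral gap that cannot be obtained by your route.
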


For a fuller history of these types of black box theorems in relation to fibred systems, see also  \cite{Berechet,Renyi, SW,waterman, Zweimuller}.

\subsection{Facts about our Example Lattices} \label{sec:examples proofs}

\begin{lemma}\label{lemma:example lattices are norm-Euclidean}
Lattices (1)--(9) are norm-Euclidean.
\end{lemma}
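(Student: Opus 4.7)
The plan is to note first that $\rad(K)$ is precisely the covering radius of the lattice $\Zee$: the supremum over $x\in\R^d$ of $d(x,\Zee)$. So Lemma \ref{lemma:example lattices are norm-Euclidean} reduces to the geometric task of showing, for each of the nine lattices, that every point of the ambient space is within Euclidean distance strictly less than $1$ of some lattice point. Equivalently, I would exhibit a finite set of candidate ``deep holes'' (points locally maximizing $d(\cdot,\Zee)$) and check that each is at distance $<1$ from the nearest lattice points; since each Dirichlet region is a polytope, its circumradius is realized at one of its vertices, which is in turn one of these deep holes.

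For the elementary cases the bound is immediate: for $\Zee=\Z$ we have $\rad(K)=\tfrac12$; for $\Zee=\Z[\ii]$ the Dirichlet region is the unit square with $\rad(K)=\tfrac{\sqrt{2}}{2}$; for $\Z[(1+\sqrt3\ii)/2]$ the region is a regular hexagon of circumradius $1/\sqrt{3}$; and for $\Z^3$ the cube gives $\rad(K)=\sqrt{3}/2$. For the hexagonal prism lattice (8) one combines the hexagonal circumradius $1/\sqrt{3}$ with the half-height $1/2$ to get $\rad(K)=\sqrt{1/3+1/4}<1$. For the rhombic dodecahedral lattice (7), which is the face-centered cubic lattice with minimum distance $1$, the deep holes are the octahedral and tetrahedral sites; both lie at distance $1/\sqrt{2}$ from their nearest lattice points, giving $\rad(K)=1/\sqrt{2}$. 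I would present these five cases in a single compact table-like paragraph, since each is either classical or a one-line computation with the given generators.

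The genuinely delicate cases are the two quaternionic lattices and the Cayley lattice. My plan is to identify these with well-known root lattices: the Hurwitz integers $\mathcal H$ realize (a scaled copy of) $F_4=D_4^\ast$, the Gausenstein integers are another norm-Euclidean order isomorphic to $F_4$ (see \cite{Chaubert}), and the Cayley integers $\mathcal C$ form a copy of $E_8$ scaled to have minimum norm $1$. One then invokes the known covering radii—$\mathcal H$ and the Gausenstein lattice both have covering radius $1/\sqrt{2}$ (realized at deep holes such as $(1/2,1/2,0,0)$, which I would verify directly by exhibiting four lattice points, including a unit of the form $(\pm 1\pm \ii\pm\jj\pm\kk)/2$, at exactly that distance), while $E_8$ scaled to minimum norm $1$ has covering radius $1/\sqrt{2}$ as well. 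This is the main obstacle: one either cites these covering radii from the literature (Conway--Sloane type references) or gives direct deep-hole computations. In the octonionic case, I would verify it concretely by testing the candidate deep hole at a midpoint of two adjacent Voronoi vertices using Rehm's generators. In every case the bound is $\rad(K)\le 1/\sqrt{2}<1$, so the lemma follows.
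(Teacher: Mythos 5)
The overall strategy---computing $\rad(K)$ as the covering radius of each lattice by locating extremal vertices of the Dirichlet region / deep holes of the lattice---matches the paper's approach, and most of your case-by-case computations agree with the paper (including your $\sqrt{1/3+1/4}=\sqrt{7/12}=\sqrt{21}/6$ for the hexagonal prism). However, there is a genuine error in your treatment of lattice (5), the Gausenstein integers.

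You assert that the Gausenstein integers are ``isomorphic to $F_4$'' with covering radius $1/\sqrt{2}$. This is false. Writing the four generators $1,\ \ii,\ \tfrac{1+\sqrt{3}\jj}{2},\ \tfrac{\ii+\sqrt{3}\kk}{2}$ in coordinates on $\R^4$ shows the lattice splits orthogonally into two copies of the hexagonal lattice $A_2$: one in the $(1,\jj)$-plane and one in the $(\ii,\kk)$-plane. So the Gausenstein lattice is $A_2\oplus A_2$, not $D_4\cong F_4$. The covering radius of $A_2$ (with minimal vectors of norm $1$) is $1/\sqrt{3}$, so the covering radius of the orthogonal sum is $\sqrt{1/3+1/3}=\sqrt{2/3}\approx 0.816$, realized at the vertex $\tfrac{3+3\ii+\sqrt{3}\jj+\sqrt{3}\kk}{6}$---\emph{not} $1/\sqrt{2}\approx 0.707$. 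While $\sqrt{2/3}<1$ so the lemma's conclusion survives, the discrepancy is not harmless for the paper as a whole: in Proposition~\ref{prop: example lattices are 3-remote}, 3-remoteness follows trivially whenever $\rad(K)<\sqrt{3}-1\approx 0.732$, which would be the case if your value $1/\sqrt{2}$ were correct; the fact that $\rad(K)=\sqrt{2/3}>\sqrt{3}-1$ for the Gausenstein lattice is exactly why that lattice requires the separate hand-check of the twelve norm-$\sqrt{3}$ points. Your remaining identifications (Hurwitz integers as $D_4$, Cayley integers as $E_8$, FCC for the rhombic dodecahedron, all with covering radius $1/\sqrt{2}$ after normalizing to minimum distance $1$) agree with the paper's values.
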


We will show that these lattices are norm-Euclidean by finding extremal points of $K$. This is equivalent to finding the deep holes of the lattice, although we will not make use of this in the proof.

\begin{proof}
Several of these are well-known or such simple applications of geometry that we will not go into detailed proofs. Namely, for lattice (1), $\rad(K) = 1/2$. For lattice (2), $\rad(K) = 1/\sqrt{2}$, as $K$ here is a square centered at the origin with in-radius $1/2$. For lattice (3), $\rad(K) = 1/\sqrt{3}$, as $K$ here is a hexagon centered at the origin with inradius $1/2$. For lattice (4), $\rad(K)=1/\sqrt{2}$, which is proven in Theorem 3.4.3 of \cite{Mennen}. For lattice (6), $\rad(K)=1/\sqrt{2}$, which is proven in Theorem 2.2 of \cite{Rehm}. For lattice (9), $\rad(K) = \sqrt{3}/2$, which is due to the corner of the cube being at $(1/2,1/2,1/2)$.

For lattice (5), recall that our lattice in this case is
    \(
    \mathcal{Z}=\mathbb{Z}\oplus \mathbb{Z}\ii\oplus \mathbb{Z}\frac{1+\sqrt{3}\jj}{2}\oplus \mathbb{Z}\frac{\ii+\sqrt{3}\kk}{2}.
    \)
If we focus only on the real and $\jj$ dimensions, then the lattice is simply $\mathbb{Z}\oplus \mathbb{Z}[\frac{1+\sqrt{3}\jj}{2}]$, which has an extremal point at $(3+\sqrt{3}\jj)/6$, as this is just the standard hexagonal lattice. Likewise, on the $\ii$ and $\kk$ dimensions, lattice is simply $\ii\left(\mathbb{Z}\oplus \mathbb{Z}[\frac{1+\sqrt{3}\jj}{2}] \right) $, and there is an extremal point at $(3\ii+\sqrt{3}\kk)/6$. These two sublattices are perpendicular to one another, so an extremal point for $K$ is located at 
\(
\frac{3+3\ii+\sqrt{3}\jj+\sqrt{3}\kk}{6},
\)
which has norm $\sqrt{2/3}$.

Lattice (7) is given by
    \(
    \mathcal{Z} = \mathbb{Z}\left( \frac{1}{\sqrt{2}},\frac{1}{\sqrt{2}},0\right) \oplus \mathbb{Z}\left( \frac{1}{\sqrt{2}},-\frac{1}{\sqrt{2}},0\right)  \oplus \mathbb{Z}\left( \frac{1}{\sqrt{2}},0,\frac{1}{\sqrt{2}}\right)  .
    \)
For this lattice, the Dirichlet regions are all rhombic dodecahedrons, see Figure \ref{fig:cube and dodecahedron}. Rhombic dodecahedrons can be constructed via two equal cubes in the following manner: take one cube and cut it into 6 square pyramids with bases on the face of the cube and apex at the center of the cube; then attach these pyramids, by their bases, to the faces of the second cube (see \cite[pg.~26]{CoxeterBook}). The corners of the rhombic dodecahedron will be the 8 corners of the second cube and the 6 apexes of the pyramids. In our case, we can quickly calculate that these corners will be at $(\pm 1/2\sqrt{2}, \pm 1/2\sqrt{2},0)$ and all its permutations as well as $(\pm 1/\sqrt{2},0,0)$ and all its permutations. The latter points are the extremal points. Thus, this $K$ has radius $1/\sqrt{2}$.

Consider lattice (8). In this case, $K$ is a hexagonal prism. It is simple to check that in the first two coordinates, $(1/2, \sqrt{3}/6)$ is an extremal point as the lattice here is just the hexagonal lattice, and $1/2$ is an extremal point in the third coordinate as the lattice here is just $\mathbb{Z}$. Since these two sublattices are perpendicular to one another, an extremal point for $K$ is just $(1/2, \sqrt{3}/6,1/2)$, which has norm $\sqrt{21}/6\approx 0.76$.
\end{proof}

\begin{prop}\label{prop: example lattices are 3-remote}
Lattices (1)--(8) are all 3-remote.
\end{prop}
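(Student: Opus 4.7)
The plan is to split the proof into a ``trivial'' case handled by a general lower bound, and two harder cases that require direct enumeration of lattice points of norm $\sqrt{3}$.

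First I would observe that for any $z\in \mathcal{Z}$ and $x\in K$, the triangle inequality gives $|z-x|\ge |z|-|x|\ge |z|-\rad(K)$, so $d(z,K)\ge \sqrt{3}-\rad(K)$ whenever $|z|=\sqrt{3}$. This is $\ge 1$ as soon as $\rad(K)\le \sqrt{3}-1 \approx 0.732$. Lattices (1) and (2) have integer-valued norm squared and thus no lattice points of norm $\sqrt{3}$, so the condition is vacuous. Using the explicit values of $\rad(K)$ from Lemma \ref{lemma:example lattices are norm-Euclidean}, lattices (3), (4), (6), and (7) all satisfy $\rad(K) \le 1/\sqrt{2} < \sqrt{3}-1$, so the trivial bound finishes them off.

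The remaining lattices (5) and (8) narrowly fail this crude bound ($\rad(K)=\sqrt{2/3}$ and $\sqrt{21}/6$ respectively, both exceeding $\sqrt{3}-1$), and for these I would exploit product structure. The Gausenstein lattice (5) splits as an orthogonal direct sum of two copies of the triangular lattice, one in $\mathrm{span}(1,\jj)$ (generated by $1$ and $(1+\sqrt{3}\jj)/2$) and one in $\mathrm{span}(\ii,\kk)$ (generated by $\ii$ and $(\ii+\sqrt{3}\kk)/2$), and correspondingly $K$ factors as $H\times H$, where $H$ is the regular hexagon with inradius $1/2$ and circumradius $1/\sqrt{3}$. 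Since the binary form $x^2+xy+y^2$ represents $\{0,1,3,4,7,\ldots\}$ and in particular not $2$, every lattice point of norm $\sqrt{3}$ lies entirely in one of the two planes and coincides with one of the six second-shell points of $H$, each lying in a vertex direction. For such a point $z$, the product structure reduces $d(z,K)$ to the planar distance from the second-shell point to $H$, which equals $\sqrt{3}-1/\sqrt{3}=2/\sqrt{3}>1$. Lattice (8) (hexagonal prism) admits the analogous product decomposition as a triangular lattice $\oplus\, \mathbb{Z}$, with $K=H\times[-1/2,1/2]$; the same non-representability argument forces norm-$\sqrt{3}$ points to lie in the hexagonal slice with zero $z$-component, and the same distance computation yields $2/\sqrt{3}>1$.

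The main obstacle is not really conceptual but bookkeeping: identifying the orthogonal product decompositions in (5) and (8), excluding ``mixed'' norm-$\sqrt{3}$ representations via the arithmetic of $x^2+xy+y^2$, and pinning down the nearest vertex of $H$ to each second-shell lattice point. Everything else is a direct application of Lemma \ref{lemma:example lattices are norm-Euclidean} and the triangle inequality.
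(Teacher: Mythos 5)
Your proposal is correct and follows the same overall strategy as the paper — use the trivial lower bound $d(z,K)\ge \sqrt{3}-\rad(K)$ for the easy lattices and handle (5) and (8) by a more careful argument — but you take a genuinely different and somewhat cleaner route for lattice (5). The paper disposes of (5) by a single half-space argument: since $1\in\Zee$, $K$ lies in the half-space $x\cdot 1\le 1/2$, and each of the twelve norm-$\sqrt{3}$ points is already at distance exactly $1$ from that half-space. You instead observe that the Gausenstein lattice is an orthogonal direct sum of two triangular lattices, so that $K=H\times H$ with $H$ a hexagon of circumradius $1/\sqrt{3}$, and that non-representability of $2$ by $x^2+xy+y^2$ forces every norm-$\sqrt{3}$ point to live in one of the two orthogonal factors, where the trivial bound $\sqrt{3}-1/\sqrt{3}=2/\sqrt{3}>1$ applies. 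What this buys is a unified treatment: exactly the same product-decomposition argument handles (8), whereas the paper treats (5) and (8) with two ad hoc arguments (a half-space bound and a dimension reduction to lattice (3), respectively). Your observation that (1) and (2) are vacuous (no norm-$\sqrt{3}$ points, since $3$ is not a sum of one or two squares) is a minor extra remark that the paper subsumes under the $\rad(K)$ bound, but it does no harm. One small caution for a full write-up: your claim that $d(z,H)$ \emph{equals} $\sqrt{3}-1/\sqrt{3}$ requires identifying the nearest point of $H$ as the vertex in the direction of $z$ (true by the hexagon's reflective symmetry), but for the proof you only need the inequality $d(z,H)\ge |z|-\rad(H)$, which is immediate, so the bookkeeping is lighter than you anticipated.
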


\begin{proof}
Recall that an integral lattice is 3-remote if for any $z\in\mathcal{Z}$ of norm $\sqrt{3}$, we have that $d(z,K)\ge 1$. This is trivially satisfied if $\rad(K)<\sqrt{3}-1$. We note the following values.
\(
\sqrt{3}-1&= 0.73205\dots\\
1/\sqrt{3} &= 0.57735\dots\\
1/\sqrt{2} &= 0.70711\dots
\)
As noted in the proof of Lemma \ref{lemma:example lattices are norm-Euclidean}, $\rad(K)$ is $1/2$ for lattice (1), $1/\sqrt{3}$ for lattice (3), and $1/\sqrt{2}$ for lattices (2), (4),  (6), and (7). So these are 3-remote.

Let us consider lattice (5). Here, $\rad(K) = \sqrt{2/3}\approx 0.816$, so the previous argument does not apply. So we will check all points of norm $\sqrt{3}$ and show that they are at least distance $1$ from $K$. There are 12 points of norm $\sqrt{3}$, which can be verified by computer calculation. These correspond to the points
\(
\frac{3+\sqrt{3}\jj}{2}\left( \frac{1+\sqrt{3}\jj}{2}\right)^n \qquad  \frac{\ii(3+\sqrt{3}\jj)}{2}\left( \frac{1+\sqrt{3}\jj}{2}\right)^n
\)
for $n=0,1,2,3,4,5$. We will show that the point $(3+\sqrt{3}\jj)/2$ is more than distance $1$ away from $K$, as the other cases are very similar. Note that since $1$ belongs to our lattice, $K$ must be completely contained in the half-space $x\cdot1 \le 1/2$. The distance from $(3+\sqrt{3}\jj)/2$ to this half-space is already $1$, so clearly the distance from this point to $K$ is at least $1$.

Finally, lattice (8) again has too large of a radius, as $\rad(K) = \sqrt{21}/6\approx0.76$. Here, we can identify this lattice with $\mathbb{Z}[\frac{1+\sqrt{3}\ii}{2}]\oplus \mathbb{Z}$, and  the points of norm $\sqrt{3}$ are precisely 
\(
\left( \frac{3+\sqrt{3}\ii}{2} \left( \frac{1+\sqrt{3}\ii}{2}\right)^n, 0\right)
\)
for $n=0,1,2,3,4,5$. As all cases are similar, we consider when $n=0$. Since our points of norm $\sqrt{3}$ all have last coordinate $0$ and $K$ is a prism, we need only work with the first two coordinates. However, this reduces down to the case of lattice (3), which has already been shown to be 3-remote.
\end{proof}

\section{Finite Range and Serendipity}\label{sec:finite range}
Fix an Iwasawa CF over $\R^d$. 

We now show that the finite range and serendipity conditions are equivalent if $K$ is bounded by finitely many hyperplanes and/or spheres (Lemma \ref{lemma:FRisSer}), and use this observation to study the finite range condition in $\alpha$-CFs.
    
Recall that for the empty string $\wedge$ we have $C_\wedge=K$ and, for a digit sequence $s$ and digit $a$, we have $C_{as}=T_a^{-1}C_{s}\cap K$. Set $E_0 = \partial K$, $E_{i+1} = E_i\cup TE_i$, and $E=\cupover_{i=0}^\infty E_i$.  We continue to work in Euclidean space (except for the slightly more general Lemmas \ref{lemma:boundaryIsInE} and \ref{lemma:Ebounds}) and use the convention $T(0)=0$ and $\iota(0)=0$, although in certain cases it is more convenient to think that $\iota(0)=\infty$.

We first show that boundaries of cylinders lie in $E$, and that, conversely, each point of $E$ lies in the boundary of a cylinder---in a quantitative way.
\begin{lemma}
\label{lemma:boundaryIsInE}
Consider any Iwasawa CF algorithm, $s$ a digit sequence, and $C_s$ the associated cylinder. Then $\partial T^{|s|}C_s\subset E_{|s|}\subset E$.
\begin{proof}
If $s$ is the empty string, then $T^0 C_s=K$, so $\partial T^0 C_s = E_0$.

Consider $TC_a$ for a digit $a$. By definition of cylinders, $C_a=\iota (K+a)\cap K$. Since $a$ is the first digit of all points in $C_a$, we have $T C_a=  \iota C_a -a= K \cap (\iota K -a)$. We then calculate
\(  
\partial T C_a &\subset  \partial K \cup  \left( \partial (\iota K-a) \cap K\right) \subset \partial K \cup  \left( \bigcup_{a_1\in\mathcal{Z}} \partial (\iota K -a_1)\cap K\right) \\
&= \partial K \cup  \left( \bigcup_{a_1\in\mathcal{Z}}  (\iota \partial K-a_1) \cap K\right) \subset \partial K \cup  T \partial K = E_1.
\)

In general, assume that $\partial T^{|s|}C_s\subset E_{|s|}$ for all $s$ of a fixed length $k$. We will show that this is also true for all $s$ of length $k+1$. Let $s$ be a string of length $k$ and $a$ a digit. Then by definition, $ C_{sa}$ consists of all points in $x\in C_s$ such that $T^k x \in C_a$, i.e., $T^k C_{sa} = C_a\cap T^k C_s$. But then, as we had in the previous paragraph
\(
T^{k+1}C_{sa} &= K\cap ( \iota K -a)\cap ( \iota (T^k C_s)-a)\\
\partial T^{k+1}C_{sa} 
   &\subset \partial K \cup  \left( \partial (\iota K -a)\cap K\right) \cup \left(  \partial (\iota (T^k C_s)-a) \cap K\right)\\
   &\subset E_1 \cup \left(  \partial (\iota (T^k C_s)-a) \cap K\right)
\subset  E_1 \cup \left( ( \iota \partial (T^k C_s)-a) \cap K\right) \\
&\subset  E_1 \cup \left( (\iota E_{|s|}-a) \cap K\right) 
   \subset E_1 \cup TE_{|s|} \subset E_{|s|+1}.
\)
This completes the proof by induction.
\end{proof}
\end{lemma}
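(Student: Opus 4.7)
The plan is to induct on $n=|s|$. The base case $n=0$ is immediate: the empty string gives $C_{\wedge}=K$, hence $\partial T^0 C_{\wedge}=\partial K = E_0\subset E$.

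For the inductive step, I would extend a length-$n$ string $s$ to a length-$(n+1)$ string $sa$. First establish the combinatorial identity
\(
T^n C_{sa} = T^n C_s \cap C_a,
\)
by writing $C_{sa}=C_s\cap T^{-n} C_a$ and using the injectivity of $T^n$ on $C_s$. Since this set lies inside $C_a$, applying $T$ once more amounts to applying the single-branch map $T_a(x)=\iota x - a$; combined with the base-case formula $T C_a = K\cap (\iota K - a)$ (immediate from $C_a=\iota(K+a)\cap K$), this gives
\(
T^{n+1} C_{sa} = \bigl(\iota (T^n C_s) - a\bigr) \cap K \cap (\iota K - a).
\)

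Taking the boundary of this triple intersection produces at most three pieces to control: (i) $\partial K$, (ii) $K\cap(\iota\partial K - a)$, and (iii) $K\cap(\iota\,\partial T^n C_s - a)$. Piece (i) is $E_0\subset E_{n+1}$. For (ii) and (iii) I would invoke the simple but crucial observation that whenever $F\subset K$, the set $K\cap(\iota F - a)$ is contained in $TF$: if $y=\iota x - a\in K$ for some $x\in F$, then by the defining uniqueness of the nearest-lattice-point map $[\cdot]$ one must have $[\iota x]=a$, so $Tx=\iota x - a = y\in TF$. Applied with $F=\partial K$, this puts (ii) inside $T\partial K\subset E_1\subset E_{n+1}$; applied with $F=\partial T^n C_s$, which lies in $E_n$ by the inductive hypothesis, it puts (iii) inside $T E_n\subset E_{n+1}$. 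Thus $\partial T^{n+1} C_{sa}\subset E_{n+1}$, completing the induction.

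The main obstacle I anticipate is the bookkeeping: producing the triple-intersection formula cleanly, and verifying the auxiliary inclusion $K\cap(\iota F - a)\subset TF$. Both reduce to being careful about how the single-branch maps $T_a$ agree with $T$ on the appropriate preimages, and to using that $K$ is a fundamental domain with its chosen boundary so that $[\cdot]$ is single-valued. Once these are in place the induction itself is purely formal.
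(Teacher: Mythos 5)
Your proof is correct and follows essentially the same induction as the paper's: base case on the empty string, the identity $T^n C_{sa}=T^n C_s\cap C_a$, one more application of $T_a$, and a three-way boundary decomposition. The one small difference is that you isolate the clean observation $K\cap(\iota F-a)\subset TF$ (using uniqueness of $[\cdot]$) to handle pieces (ii) and (iii), whereas the paper handles the analogous step by taking a union over all shifts $a_1\in\mathcal Z$ before recognizing it as $T\partial K$; your version makes the mechanism a bit more transparent but the content is identical.
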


\begin{lemma}
\label{lemma:Ebounds}
Every point of $E$ is in $T^{\norm{s}}C_s$ for some $s$: if $x\in \partial K=E_0$, then $s$ is the empty string of digits, and if $x\in E_n \setminus E_{n-1}$, then one can choose $s$ such that $n=\norm{s}$.
\begin{proof}
The case $x\in \partial K$ is immediate, so suppose $x\in E_{n}\setminus E_{n-1}$ and let $y\in \partial K$ with $T^{n} y = x$. If $T^iy=0$ for any $0\leq i\leq n-1$, then we would also have $x=0$, contradicting $x\in E_{n}\setminus E_{n-1}$. Thus, there is a string of digits $s=a_1a_2\dots a_{n}$ such that $T_{a_i}\dots T_{a_2}T_{a_1} y\in K$ for all $i$ with $1\le i < n$. Note that in fact $T_{a_i}\dots T_{a_2}T_{a_1} y\in K^\circ$ for $i< n$: otherwise, we would have  $z=T_{a_i}\dots T_{a_2}T_{a_1} y\in \partial K$ and $T^{n-i}z = x$ so $x\in E_{n-i} \subset E_{n-1}$. 
We claim that $y\in \partial C_s$. If we have that $y\in K$, then we have $y\in C_s$ and furthermore $y\notin C_s^\circ$ since $y\in \partial K$. If $y\notin K$, let $B'_\epsilon = B(y,\epsilon)\cap K$ for a small $\epsilon$ to be determined. We claim that for sufficiently small $\epsilon$ we have $B'_\epsilon \subset C_s$. Indeed, $B'_\epsilon \subset C_{a_1}$ if $\iota B'_\epsilon-a_1\subset K$. Since $\iota y-a_1\in K^\circ$, for sufficiently small values of $\epsilon$ it follows that $\iota B'_\epsilon-a_1\subset K^\circ$. Likewise reducing $\epsilon$, if necessary, to accommodate the remaining digits in $s$, we obtain a sufficiently small $\epsilon$ such that  $B'_\epsilon \subset C_s$, as desired. We may furthermore reduce $\epsilon$, if needed, to avoid $0\in T^i B'_\epsilon$ for $1\leq i\leq n-1$. From this, we get that $y\in \partial C_s$. We then observe that $y\in \partial C_s$ implies that $T^ny=x\in \partial T^nC_s$ since for both $y$ and $C_s$ we have $T^n=T_{a_n}\dots T_{a_2}T_{a_1}$, which is a homeomorphism on a neighborhood of $y$ and therefore preserves boundaries.
\end{proof}
\end{lemma}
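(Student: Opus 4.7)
The plan is to prove the lemma by induction on the minimal level $n$ at which $x$ appears in $E$, leveraging the unwound description $E_n=\bigcup_{i=0}^n T^i\partial K$ (which follows from a quick induction using $E_{i+1}=E_i\cup TE_i$ and $T(A\cup B)=TA\cup TB$). The empty-string case ($x\in\partial K=E_0$) is immediate since $T^0C_\wedge=K$ gives $x\in\partial K=\partial T^0C_\wedge$.

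For $x\in E_n\setminus E_{n-1}$ with $n\geq 1$, I would first use the unwound description to find $y\in\partial K$ and $i\leq n$ with $T^iy=x$, and observe that $i$ must equal $n$: any smaller index would place $x$ in $E_i\subset E_{n-1}$, contradicting minimality. I would then define the candidate string $s=a_1a_2\cdots a_n$ by $a_{j+1}=[\iota T^jy]$ for $0\leq j<n$. Two auxiliary facts about the orbit are crucial: first, no intermediate iterate can equal $0$, since $T(0)=0$ would propagate to give $x=0\in E_0\subset E_{n-1}$; second, each intermediate iterate $T^jy$ with $1\leq j<n$ must lie in $K^\circ$, because $T^jy\in\partial K=E_0$ would give $x=T^{n-j}(T^jy)\in E_{n-j}\subset E_{n-1}$, again a contradiction. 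These two facts jointly guarantee that the composition $T^n=T_{a_n}\circ\cdots\circ T_{a_1}$ is defined and a local homeomorphism on a neighborhood of $y$.

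Next I would show $y\in\partial C_s$, which combined with the local homeomorphism property will force $x=T^ny\in\partial T^nC_s$ as required. This splits into two cases depending on the chosen boundary convention for $K$. If $y\in K$, then $y\in C_s$ by construction of $s$, but $y\in\partial K$ means every neighborhood of $y$ meets $K^c\subset C_s^c$, so $y$ is not interior to $C_s$. If $y\notin K$, then $y\notin C_s$, and I would exhibit a small ball $B(y,\epsilon)\cap K\subset C_s$ by shrinking $\epsilon$ so that each of the maps $\iota(\cdot)-a_j$ sends the relevant iterate into $K^\circ$ (using the interior facts established above, plus continuity of $\iota$ away from $0$); this makes $y$ a limit point of $C_s$ and hence in $\partial C_s$.

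The main obstacle will be the bookkeeping in the $y\notin K$ case: one must shrink $\epsilon$ finitely many times to keep each of the $n$ successive images $T^j(B(y,\epsilon)\cap K)$ inside $K^\circ$ and away from $0$, so that the nested inversions defining the cylinder condition for $s$ all remain valid on the ball. Everything else is a formal consequence of the induction on $n$ and the openness of the interior conditions.
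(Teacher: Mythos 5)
Your proposal follows essentially the same approach as the paper's proof: use the unwound description of $E_n$ to find $y\in\partial K$ with $T^ny=x$, define the digits $a_{j+1}=[\iota T^jy]$, show the intermediate iterates avoid $0$ and lie in $K^\circ$, prove $y\in\partial C_s$ by the two-case analysis on whether $y\in K$, and conclude via local homeomorphism.

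One small slip in the first auxiliary fact: you write that $T^iy=0$ would give ``$x=0\in E_0\subset E_{n-1}$,'' but $0\notin E_0=\partial K$ since the origin lies in the interior of the Dirichlet domain. The correct deduction is that $T^iy\ne 0$ for $1\le i\le n-1$ because otherwise $x=T^ny=T^{n-i}(T^iy)=0=T^iy\in T^i(\partial K)\subset E_i\subset E_{n-1}$, contradicting $x\notin E_{n-1}$ (and the case $i=0$ is vacuous since $y\in\partial K$ forces $y\ne 0$). This is easily patched and does not affect the rest of the argument.
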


We will next restrict our attention to objects constructed from hyperplanes and spheres (HASs)\footnote{While it is common to conflate both objects into the term ``sphere,'' we will want to reserve the term for metric spheres.}.  To prove the equivalence of our two conditions for regions bounded by finitely many HASs, we define a class of sets that includes such boundaries, and is also closed under finite unions, translations, and (when avoiding the origin) inversions.

\begin{defi}\label{defi:FSC}
A \emph{finite spherical complex (FSC)} is a set arising from the following construction.
Suppose $A_1, \ldots, A_n$ is a finite collection of codimension-1 HASs, $A=\cupover_i A_i \setminus \cupover_{i\neq j}A_i\cap A_j$ is their union with the pairwise intersections removed, and $\mathcal B$ is the set of closures of connected components of $A$. Let $\mathcal B' \subseteq \mathcal B$ and take $B=\cup \mathcal B'$. Then $B$ is called an FSC.
\end{defi}

\begin{example}
The boundary of the unit square is an FSC, as are all Dirichlet regions under consideration. FSCs are also closed under conformal mappings that don't send their points to $\infty$ and unions. In particular, we will prove that the sets in Figures \ref{fig:planar Hurwitz} and \ref{fig:cube and dodecahedron} are FSCs.
\end{example}

Finite spherical complexes possess the following key property:

\begin{lemma}
\label{lemma:FSCcomplement}
Let $A_1, \ldots, A_n$ be a collection of HASs. Then there are finitely many FSCs that can be constructed from $A_1, \ldots, A_n$, and the complement of any such FSC will have finitely many connected components.
\begin{proof}
Working in the one-point compactification of $\R^d$ (and compactifying each $A_i$ if it's a hyperplane), take any point $x$ of $U=A_1 \cup \cdots \cup A_n$ and send it to infinity using a M\"obius transformation. Then, any HASs that pass through $x$ are hyperplanes, and we obtain a neighborhood of $x=\infty$ that intersects $U^c$ with finitely many connected components. Now, by compactness of $U$, we obtain finitely many neighborhoods of $U$, each intersecting $U^c$ with finitely many components, whose union covers $U$. Together, this gives a neighborhood $U'$ of $U$ that intersects $U^c$ with finitely many connected components. Since any point of $U^c$ can be connected to some point of $U'$, we conclude that $U^c$ has finitely many connected components. 

Given an FSC constructed from $A_1, \ldots, A_n$, the complement of the FSC will contain the complement of $U$ as a dense subset, and will therefore have finitely many connected components. This gives the second claim of the lemma.

Now, the set $\mathcal B$ of building blocks for an FSC consists of the closures of connected components of $U\setminus \cupover_{i\neq j} A_i\cap A_j$. Equivalently, we may restrict our attention to each $A_k$ (thinking of it now as $\R^{d-1}$ or $\Sph^{d-1}$) and work with the components of $A_k\setminus \cupover_{i\neq k} A_i$, noting that each $A_k\cap A_i$ is either a sphere, a plane, point, or the empty set. The argument in the first paragraph then gives that each  $A_k\setminus \cupover_{i\neq k} A_i$ has finitely many connected components, so $\mathcal B$ is finite, giving the first claim of the lemma.
\end{proof}
\end{lemma}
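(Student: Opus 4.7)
The plan is to prove the two claims together by first establishing that for any finite collection of HASs $A_1, \ldots, A_n$ in $\R^d$ (or, more naturally, in its one-point compactification $\Sph^d$), the complement of their union $U = A_1 \cup \cdots \cup A_n$ has finitely many connected components. Once this fact is available, both conclusions of the lemma will follow: the second because the complement of any FSC contains $U^c$ as a dense subset, and the first by a dimensional reduction that feeds the complement bound back into itself one dimension lower.

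For the complement bound on $U^c$, I would argue via M\"obius transformations and compactness. Fix a point $x \in U$; since HASs are preserved by M\"obius transformations, I send $x$ to $\infty$, after which every $A_i$ passing through $x$ becomes a hyperplane and the remaining $A_i$'s stay bounded away from $\infty$. In a neighborhood $V_x$ of $\infty$ consisting of the complement of a sufficiently large Euclidean ball, only finitely many hyperplanes appear, and the complement of a finite hyperplane arrangement is well known to have only finitely many connected components. Compactness of $U$ in $\Sph^d$ then yields a finite subcover $V_{x_1}, \ldots, V_{x_N}$ of $U$, so $U^c \cap (V_{x_1} \cup \cdots \cup V_{x_N})$ has finitely many components. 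Since any point of $U^c$ can be connected by a path in $U^c$ to some point of that open neighborhood of $U$, the entire $U^c$ has finitely many connected components.

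For the FSC count, the key observation is that an FSC is determined by a choice of subset $\mathcal{B}' \subseteq \mathcal{B}$, where $\mathcal{B}$ is the set of closures of connected components of $A = U \setminus \bigcup_{i \neq j}(A_i \cap A_j)$; thus it suffices to show $\mathcal{B}$ is finite. Each component of $A$ is contained in a single $A_k$, so I restrict attention to $A_k \setminus \bigcup_{i \neq k}(A_i \cap A_k)$. Now $A_k$ is homeomorphic to $\R^{d-1}$ or $\Sph^{d-1}$, and each nontrivial $A_i \cap A_k$ is a HAS one dimension lower inside $A_k$. Applying the complement bound just established, but in dimension $d-1$, shows that each such $A_k$ minus these lower-dimensional HASs has finitely many components; summing over $k$ gives the finiteness of $\mathcal{B}$. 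The second claim of the lemma follows directly from the complement bound and density of $U^c$ in $B^c$.

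The main obstacle is treating hyperplanes and spheres uniformly: one must confirm that pairwise intersections $A_i \cap A_j$, although they may degenerate to tangent points or to lower-codimensional HASs (or be empty), do not disrupt the dimensional-reduction step, and that ``hyperplane or sphere inside $A_k$'' is the right notion of HAS in the induction. The M\"obius-plus-compactness trick handles the mixed arrangement cleanly by linearizing everything near a chosen point, and it is exactly this local linearization that makes the inductive reduction to one-dimension-lower HAS arrangements work.
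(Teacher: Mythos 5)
Your proposal follows essentially the same route as the paper: the Möbius-plus-compactness argument to bound the components of $U^c$, the density observation to transfer this bound to complements of FSCs, and the dimensional-reduction step within each $A_k$ to bound $|\mathcal{B}|$. The concern you flag about degenerate intersections $A_i \cap A_j$ is handled exactly as you anticipate — the paper notes each $A_k \cap A_i$ is a sphere, plane, point, or empty, so the one-dimension-lower HAS arrangement remains well-defined.
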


\begin{lemma}
\label{lemma:FRisSer}
    Consider an Iwasawa CF algorithm with a fundamental domain $K$ that is bounded by finitely many HASs. Then the finite range condition is equivalent to serendipity. Furthermore, if either condition holds then $E$ is an FSC.
\end{lemma}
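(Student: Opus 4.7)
The plan is to use Lemmas~\ref{lemma:boundaryIsInE} and~\ref{lemma:Ebounds}, which together identify $E$ with $\bigcup_s \partial T^{|s|}C_s$ (ranging over all digit strings $s$), and then to argue that the size of this union is controlled precisely by the finite range property.

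For the direction serendipity $\Rightarrow$ finite range (with $E$ an FSC), I would first prove by induction on $i$ that $E_i$ is contained in a finite union of HASs. The base case $E_0 = \partial K$ holds by the hypothesis that $K$ is bounded by finitely many HASs. For the inductive step, $E_i \subset K$ is bounded and can touch $0$ only as isolated images of $E_{i-1}$-points under $\iota$ that happen to land on the lattice, so $[\iota E_i]$ ranges over only finitely many digits; since inversions and translations preserve HASs, $E_{i+1}$ remains a finite union of HAS pieces. Under serendipity the chain stabilizes at $E = E_n$, so $E$ is an FSC. Lemma~\ref{lemma:FSCcomplement} then gives finitely many connected components $V_1, \ldots, V_m$ of $K \setminus E$, and Lemma~\ref{lemma:boundaryIsInE} forces $\partial T^{|s|}C_s \subset E$, so each $T^{|s|}C_s$ is (modulo measure zero) a union of some of the $V_i$'s. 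This gives at most $2^m$ possibilities for the image, hence the finite range property.

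For the converse, finite range $\Rightarrow$ serendipity (with $E$ an FSC), I would pick for each $U_j$ a witness string $s_j$ with $T^{|s_j|}C_{s_j} = U_j$ (mod 0). Each $U_j$ is then the image of $K$ under a finite composition of inversions and translations, hence bounded by finitely many HASs, and by Lemma~\ref{lemma:boundaryIsInE} we have $\partial U_j \subset E_{|s_j|}$. Conversely, Lemma~\ref{lemma:Ebounds} together with finite range forces every point of $E$ to lie in some $\partial U_j$. Setting $N = \max_j |s_j|$, this yields $E = \bigcup_j \partial U_j = E_N$, so the chain $E_i$ stabilizes at step $N$ and $E$ is a finite union of HAS pieces, hence an FSC; Lemma~\ref{lemma:FSCcomplement} then ensures finitely many connected components of $K \setminus E$.

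The principal subtlety to address is the mod-zero slack in the finite range condition: strictly speaking $T^{|s|}C_s$ and $U_j$ may differ on their shared HAS-bounded boundaries, so \emph{a priori} their topological boundaries need not coincide. I plan to resolve this by choosing consistent open (or closed) representatives throughout, noting that all relevant sets are open regions with HAS boundaries, so their topological boundaries agree set-theoretically once a convention is fixed. A secondary technical point worth flagging but otherwise harmless is that $0$ can enter some $E_i$ as an isolated image of points whose inversion hits the lattice; such preimages are discrete and contribute only isolated singletons, so the HAS-union structure of $E_i$ is preserved by the inductive step.
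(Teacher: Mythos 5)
Your proof is correct and takes essentially the same route as the paper: both directions pivot on Lemmas~\ref{lemma:boundaryIsInE} and~\ref{lemma:Ebounds} to identify $E$ with the union of cylinder boundaries, use Lemma~\ref{lemma:FSCcomplement} for the finite-component count, and resolve the mod-zero slack in the finite range condition by observing that everything in sight is an HAS-bounded region so closures pin down the boundaries uniquely (the paper relegates this to a footnote). The only cosmetic difference is that you spell out the inductive HAS-union structure of the $E_i$ a bit more explicitly, where the paper simply remarks that $E$ being an FSC is ``immediate from the fact that $\partial K$ is an FSC and the construction of $E$''.
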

\begin{proof}[Proof of Lemma \ref{lemma:FRisSer}]
Suppose first that serendipity holds. Then, for any cylinder $C_s$ we have from Lemma \ref{lemma:boundaryIsInE} that $\partial T^{\norm{s}}C_s \subset E$. We then have that $K\setminus E$ contains no boundary points of $T^{\norm{s}}C_s$, so the interior of $T^{\norm{s}}C_s$ is a relatively-clopen set in $K\setminus E$, consisting of several of the components of $K\setminus E$. Since $K\setminus E$ has finitely many connected components by Lemma \ref{lemma:FSCcomplement}, there are finitely many options for what $T^{\norm{s}}C_s$ could be, up to a measure 0 sets along the boundary. Thus, the finite range condition holds. The fact that $E$ is an FSC is immediate from the fact that $\partial K$ is an FSC and the construction of $E$.

Conversely, let $F=\cupover_s \partial T^{\norm s}C_s$, where $s$ ranges over all digit sequences, and suppose now that the finite range property holds\footnote{A priori, the finite range condition only classifies the cylinders up to measure 0, but our cylinders are always bounded by finitely many HASs, so this measure 0 ambiguity disappears when we take their closures.}: only finitely many digit sequences contribute to union defining $F$, of length bounded above by some $N$. Combine Lemmas \ref{lemma:boundaryIsInE} and  \ref{lemma:Ebounds} to obtain $F=E_N$. Conclude that $E$ stabilizes, since for any $n\geq N$ we have $E_n=E_N$. It thus also follows that $E$ is an FSC, since $F$ is a union of cylinder-boundaries, which are FSCs, and the union of finitely many FSCs is an FSC. Lemma \ref{lemma:FSCcomplement} then provides that $K\setminus F$ has finitely many connected components, completing the proof of serendipity.
\end{proof}

We conclude that finite-range $\alpha$-CFs occur only when $\alpha$ is rational or a quadratic surd.

\begin{lemma}
\label{lemma:realserendipity}
Let $\alpha\in (0,1)$. Consider the $\alpha$-CF defined by the data $(\R, \Z, (-\alpha, 1-\alpha])$ with inversion $\norm{1/x}$ (Nakada $\alpha$-CFs \cite{nakada1981metrical}) or $1/x$ (Tanaka-Ito $\alpha$-CFs \cite{Tanaka-Ito1981}) or $-1/x$ (cf.~\cite{katok2010structure, lukyanenko_vandehey_2022}). If the system has the finite range property, then $\alpha$ is the root of a linear or quadratic polynomial over $\Z$.
\end{lemma}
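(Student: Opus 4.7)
The plan is to translate the finite range hypothesis into serendipity via Lemma \ref{lemma:FRisSer}, and then extract a linear or quadratic relation on $\alpha$ from the eventually-periodic orbit of the boundary of $K$. Since $K=(-\alpha,1-\alpha]$ is bounded by the two endpoints $\{-\alpha, 1-\alpha\}$, Lemma \ref{lemma:FRisSer} applies and finite range is equivalent to serendipity, which in dimension $1$ forces the set $E=\cupover_{k\geq 0} T^k\partial K$ to be finite. In particular, each boundary point $\alpha_0\in\{-\alpha, 1-\alpha\}$ has a finite, hence eventually periodic, $T$-orbit.

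Fix such an $\alpha_0$ and assume for now that its orbit avoids $0$ (if it hits $0$, then $\iota$ of the preceding iterate equals an integer digit, which already gives a linear relation on $\alpha$ over $\Z$). Choose a periodic point $p=T^i\alpha_0$ of minimal period $m\geq 1$. On the branch of $T^m$ containing $p$, the map is a M\"obius transformation $M(x)=(Ax+B)/(Cx+D)$ with integer entries, since each inversion $\pm 1/x$ and each sign-branch of $|1/x|$ is represented by a matrix in $GL_2(\Z)$ and composition with integer digit shifts preserves this property. The fixed-point equation $M(p)=p$ becomes $Cp^2+(D-A)p-B=0$, a polynomial relation on $p$ of degree at most $2$ over $\Z$. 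Similarly, $p=(a\alpha_0+b)/(c\alpha_0+d)$ for integers $a,b,c,d$ with $ad-bc\neq 0$, so substituting into the relation on $p$ and clearing denominators produces a polynomial $P(\alpha_0)$ of degree at most $2$ over $\Z$, and therefore a polynomial of the same form in $\alpha$ (since $\alpha_0=\pm\alpha$ or $1-\alpha$).

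The main obstacle is the nontriviality of the resulting polynomial. The pulled-back polynomial $P$ is not identically zero provided (i) the relation $Cp^2+(D-A)p-B=0$ is nontrivial, i.e.\ $M\neq \id$ as a M\"obius transformation, and (ii) the M\"obius $\alpha_0\mapsto p$ is nondegenerate, which holds since $ad-bc\neq 0$. For (i) I use uniform expansion: on every branch $|DT(x)|=1/x^2$, and $|x|\leq \max(\alpha,1-\alpha)<1$ for $x\in K$ (using $\alpha\in(0,1)$), so the chain rule gives
\(
|DT^m(p)|=\prod_{k=0}^{m-1}\frac{1}{(T^k p)^2}\geq \max(\alpha,1-\alpha)^{-2m}>1,
\)
while the identity M\"obius has derivative identically $1$. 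Hence $M\neq \id$, so $\alpha$ satisfies a nonzero polynomial of degree at most $2$ over $\Z$, as required.
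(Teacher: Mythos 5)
Your proof is correct and follows essentially the same route as the paper's: invoke Lemma \ref{lemma:FRisSer} so that the boundary points have finite, hence eventually periodic, $T$-orbits, then deduce a (possibly degenerate) quadratic over $\Z$ from the tail being fixed by an integer M\"obius map or reaching $0$. You simply spell out the fixed-point computation and the nontriviality argument (via uniform expansion forcing $M\neq\id$) that the paper compresses into ``as in the classical case for regular CFs.''
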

\begin{remark}
One expects the converse to hold as well. For Nakada's $\alpha$-CFs, this follows from geodesic coding \cite{AS}.
\end{remark}
\begin{proof}
By Lemma \ref{lemma:FRisSer} (which also applies to Nakada's $\alpha$-CF, even though it is not an Iwasawa CF), the finite range condition implies that both points $-\alpha$ and $1-\alpha$ have finite orbits under the CF mapping $T$. Thus, these orbits are eventually periodic, and the tail of the orbit either arrives at 0 or is fixed by an element of $PSL(2,\Z)$. As in the classical case for regular CFs, this implies that both $-\alpha$ and $1-\alpha$ are roots of quadratic equation (possibly degenerate if the orbit reaches 0).

\end{proof}

Looking at subsystems, we obtain the following higher-dimensional corollary (for simplicity, we state the case of A.~Hurwitz CFs):

\begin{cor}
\label{cor:aphahurwitz}
Consider the $\alpha$-perturbed A.~Hurwitz CF, with data $(\R^2, \Z^2, (-0.5,.5]\times (-\alpha, 1-\alpha], \iota(x,y)=(x,-y)/(x^2+y^2))$. If $\alpha$ is not a root of a quadratic polynomial over $\Z$, then the system is not serendipitous (and the finite range condition fails).
\end{cor}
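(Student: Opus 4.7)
The plan is to exhibit within the 2D $\alpha$-perturbed A.~Hurwitz CF an invariant subsystem on the imaginary axis that is precisely a real $\alpha$-CF of the type covered by Lemma~\ref{lemma:realserendipity}, use that lemma to place an infinite orbit inside $E=\cupover_{k\geq 0}T^k\partial K$, and then contradict the structure of $E$ under serendipity.

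For $(0,y)$ on the imaginary axis with $y\neq 0$, one has $\iota(0,y)=(0,-1/y)$, and since $K=(-0.5,0.5]\times(-\alpha,1-\alpha]$ has $x$-range $(-0.5,0.5]$, the digit $[\iota(0,y)]$ has first coordinate $0$. Thus $\{0\}\times\R$ is $T$-invariant, and the restriction of $T$ to $\{0\}\times(-\alpha,1-\alpha]$ is precisely the real $\alpha$-CF on $(-\alpha,1-\alpha]$ with inversion $y\mapsto -1/y$, one of the systems of Lemma~\ref{lemma:realserendipity}. Since the midpoints $(0,-\alpha)$ and $(0,1-\alpha)$ of the horizontal edges of $K$ lie in $\partial K$, their 2D $T$-orbits lie in $E$ and coincide with the 1D $\alpha$-CF orbits of $-\alpha$ and $1-\alpha$. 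When $\alpha$ is not the root of any polynomial of degree $\leq 2$ over $\Z$, Lemma~\ref{lemma:realserendipity} (combined with Lemma~\ref{lemma:FRisSer}) forces at least one of these two 1D orbits to be infinite, so $E\cap(\{0\}\times\R)$ contains infinitely many distinct points.

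Now suppose, for contradiction, that the 2D system is serendipitous. By Lemma~\ref{lemma:FRisSer}, $E$ is an FSC built from a finite collection of HASs. Each such HAS has the form $\phi(L')$, where $L'$ is one of the four boundary lines $\{x=\pm 0.5, y=-\alpha, y=1-\alpha\}$ of $K$ and $\phi$ is an element of the M\"obius group $G$ generated by $\iota$ and translations by $\Z[i]$. I claim the imaginary axis $L$ is not of this form; since $G$ is closed under inverse, it suffices to show $L'\notin G\cdot L$. The key observation --- and the main obstacle of the proof --- is that any $g\in G$ with $g(\infty)=\infty$ (equivalently, $g$ upper-triangular in matrix form) is a pure integer translation, since any reduced word in $\iota$ (anti-diagonal) and integer translations containing at least one $\iota$ has nonzero lower-left matrix entry. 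Consequently $G\cdot L$ consists only of circles and vertical lines at integer $x$, and no boundary line $L'$ (horizontal, or vertical at $x=\pm 0.5\notin\Z$) lies in it. Every HAS in $E$ therefore meets the imaginary axis in at most two points, so $E\cap(\{0\}\times\R)$ is finite --- contradicting the infinite orbit established above, and proving non-serendipity.
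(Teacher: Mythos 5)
Your overall strategy---embedding the real $\alpha$-CF along the imaginary axis to get infinitely many points of $E\cap(\{0\}\times\R)$, then arguing that a serendipitous $E$ cannot account for them---is a valid alternative to the paper's proof, and the first two paragraphs are fine. However, your ``key observation'' is false, and the paper even warns you about it. The stabilizer of $\infty$ in $G=\langle\iota,\Z[i]\rangle$ is \emph{not} just the integer translations: the element $z\mapsto -z$ lies in $G$ (for instance $\tau_1\iota\tau_{-1}\iota\tau_1\iota(z) = -z$, where $\tau_a(z)=z+a$), fixes $\infty$, and is no translation. Your ``reduced word'' argument would require $G$ to be the free product of $\langle\iota\rangle$ and $\Z[i]$, but it is not---$G$ sits inside $\PSL(2,\Z[i])$ with relations, exactly as in the classical $(ST)^3=1$ phenomenon in $\PSL(2,\Z)$. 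The paper explicitly calls this out in the Open Problems subsection: ``the stabilizer of $\infty$ in the modular group $\langle\Zee,\iota\rangle$ can be larger than the starting digit group $\Zee$\dots This phenomenon even appears in the A.~Hurwitz CFs.'' There is also a second, independent gap: $g(L)$ is a line whenever $g^{-1}(\infty)\in L\cup\{\infty\}$, not only when $g$ fixes $\infty$, so even a correct description of $\Stab_G(\infty)$ does not by itself classify the lines in $G\cdot L$ without an additional reduction (say, precomposing by $\Stab_G(L)$, which acts transitively on $\Q\cup\{\infty\}$).

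Both gaps happen to be repairable here: the true $\Stab_G(\infty)$ only adds multiplication by units of $\Z[i]$, so the lines in $G\cdot L$ are vertical at integer $x$ or horizontal at integer $y$, and neither $x=\pm1/2$ nor $y=-\alpha,1-\alpha$ (with $\alpha$ irrational) is of this form. But the paper's proof avoids the Möbius-group bookkeeping entirely. It observes that each orbit point $T^i(0,-\alpha)$ is accompanied by a small arc $A_i = T^i\bigl((-\epsilon_i,\epsilon_i)\times\{-\alpha\}\bigr)\subset E$, and since $\iota$ is conformal, preserves the imaginary axis, and $T$ acts on a neighborhood of the axis by $\iota$ followed by a translation along the axis, each $A_i$ remains perpendicular to the axis. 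A HAS (circle or line) that contains an arc transverse to the imaginary axis meets the axis in at most two points and so accounts for at most two of the $A_i$; with infinitely many $A_i$ at distinct heights, $E$ cannot be a finite union of HASs. This transversality argument is shorter, makes no group-theoretic claims, and is immune to the hidden-symmetry issue that undermines your version.
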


\begin{proof}
The system restricts to a copy of the real $\alpha$-CF along the imaginary axis, giving infinitely many distinct images of the point $(0,-\alpha)$. Furthermore, each of these is accompanied by an arc $A_i=T^i((-\epsilon_i, \epsilon_i)\times\{-\alpha\})$ with each $\epsilon_i>0$. Since $\iota$ is conformal (see Lemma 2.14 in \cite{lukyanenko_vandehey_2022}) and preserves the imaginary axis, each of the arcs $A_i$ is perpendicular to the imaginary axis. Since a circle intersects the imaginary axis at most twice, the set $\{A_i\}$ is in fact infinite and cannot be produced at a finite stage in the construction of the union $E=\cup T^i \partial K$.
\end{proof}

\section{Proof of the Main Theorem} \label{sec:proofs}

We will assume throughout this section that the general assumptions of the paper (see Section \ref{sec:results}) hold and specify when we require any additional assumptions of the Main Theorem \ref{thm:main}.

Let us start by sketching the ideas of the proof.

We start with the conditions (A), (C), and (D), which relate the properties of the mapping $T$, its iterated inverse branches $T_s^{-1}$, and their Jacobians $\omega_s$. We observe that $T_s$ is a composition of translations, which are isometric and have Jacobian 1, and inversions, which are conformal and are controlled by the inversion identity
$$d(\iota y, \iota z) = \frac{d(y,z)}{\norm{y}\norm{z}}.$$
Conformality means that, infinitesimally, distances are distorted equally in all directions, so that we may use the distance identity to calculate the Jacobian of $\iota$. This then allows us to explicitly write the $\omega_s (T^n x)$ in terms of the points $\{T^i x\}_{i=0}^{n-1}$. Conditions (A), (C), (D) follow from these considerations. Condition (G) is a straightforward consequence of the inversion identity.

We then study the structure of the cylinder sets and conditions (B), (E), and (H), by looking at the image of the boundary of the Dirichlet region $K$. We show that the finite range property (B) is satisfied by proving the equivalent condition of serendipity: we will show that the sequence of sets $E_i$ is eventually constant and is in fact a finite spherical complex (FSC). Under our assumptions, the boundary of $K$ is given by (subsets of) hyperplanes of the form $x\cdot z=1/2$ for $z\in \Zee$ of norm $1$. We refer to the set of these hyperplanes as type-1 objects. We show that iterates of $\partial K$ under $T$ can be decomposed into subsets of type-1 objects, as well as certain hyperplanes through the origin (type-2 objects) or certain unit spheres (type-3 objects). Since there is a finite number of type-1, type-2, and type-3 objects in total, there are finitely many FSCs that can be constructed from them, and so the sequence $\{\bigcup_{i=0}^j T^i(\partial K)\}_j$ eventually stabilizes. We conclude that for any cylinder $C_s$, we must have that $T^{\norm{s}}C_s$ is bounded specifically by these objects, giving a finite number of possibilities for $T^{\norm{s}}C_s$, and providing the finite range property (B). Properties (E) and (H) also follow from these considerations.

We finish by looking at condition (F), which is proven by combining previous arguments with a somewhat unexpected use of rational approximates. 

In each of the results below, we specify the assumptions used in the proof, which don't always correspond to the full assumptions of Theorem \ref{thm:main}. In particular, the proof of the finite-range property (B) applies to certain fundamental domains $K$ that are not Dirichlet domains for the given lattice, but nonetheless are bounded by type-1, type-2, and type-3 regions. We show some of these in Figure \ref{fig:notBatman}.

\begin{figure}
    \centering
    \includegraphics[width=.3\textwidth]{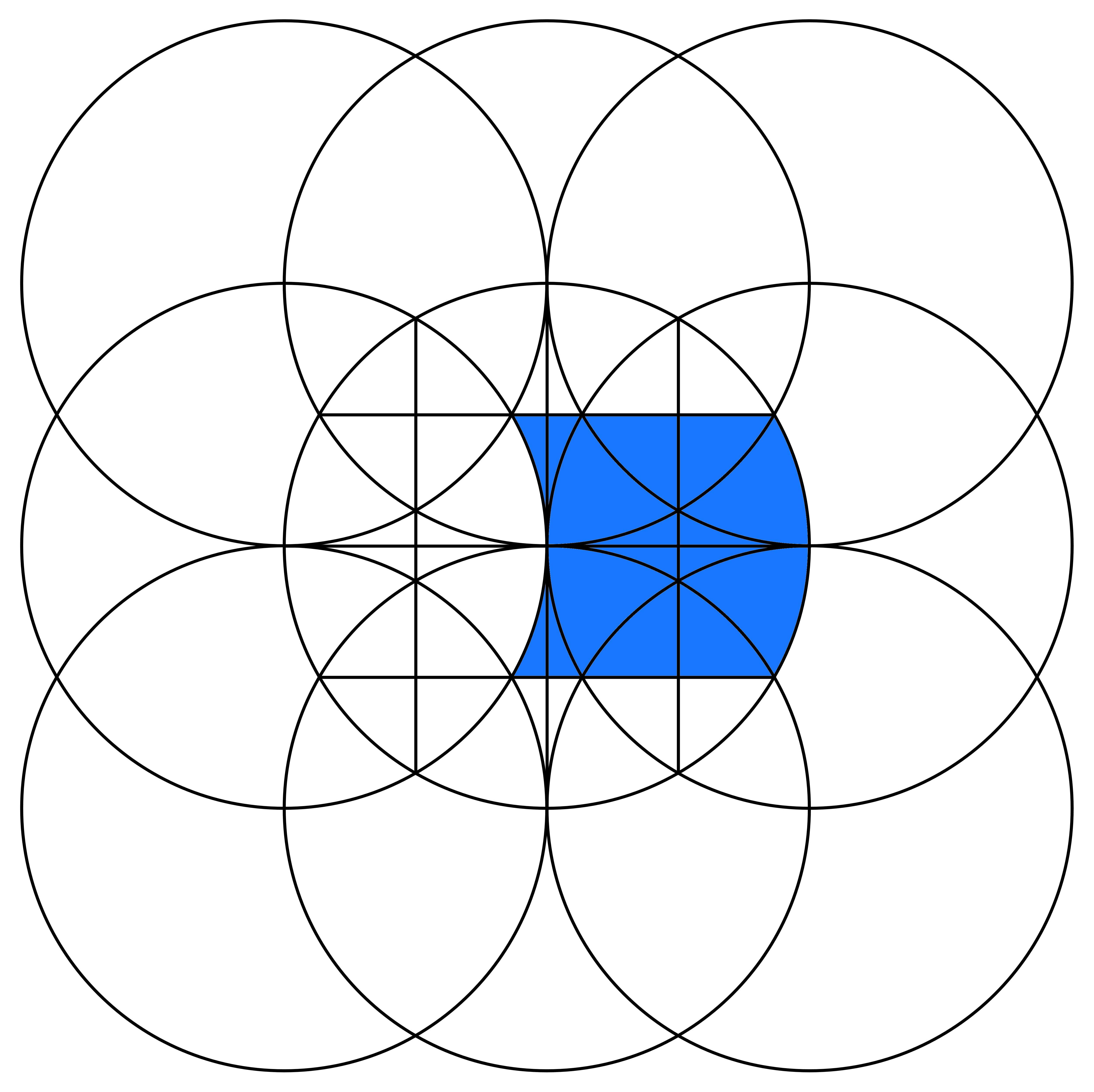}
    \includegraphics[width=.3\textwidth]{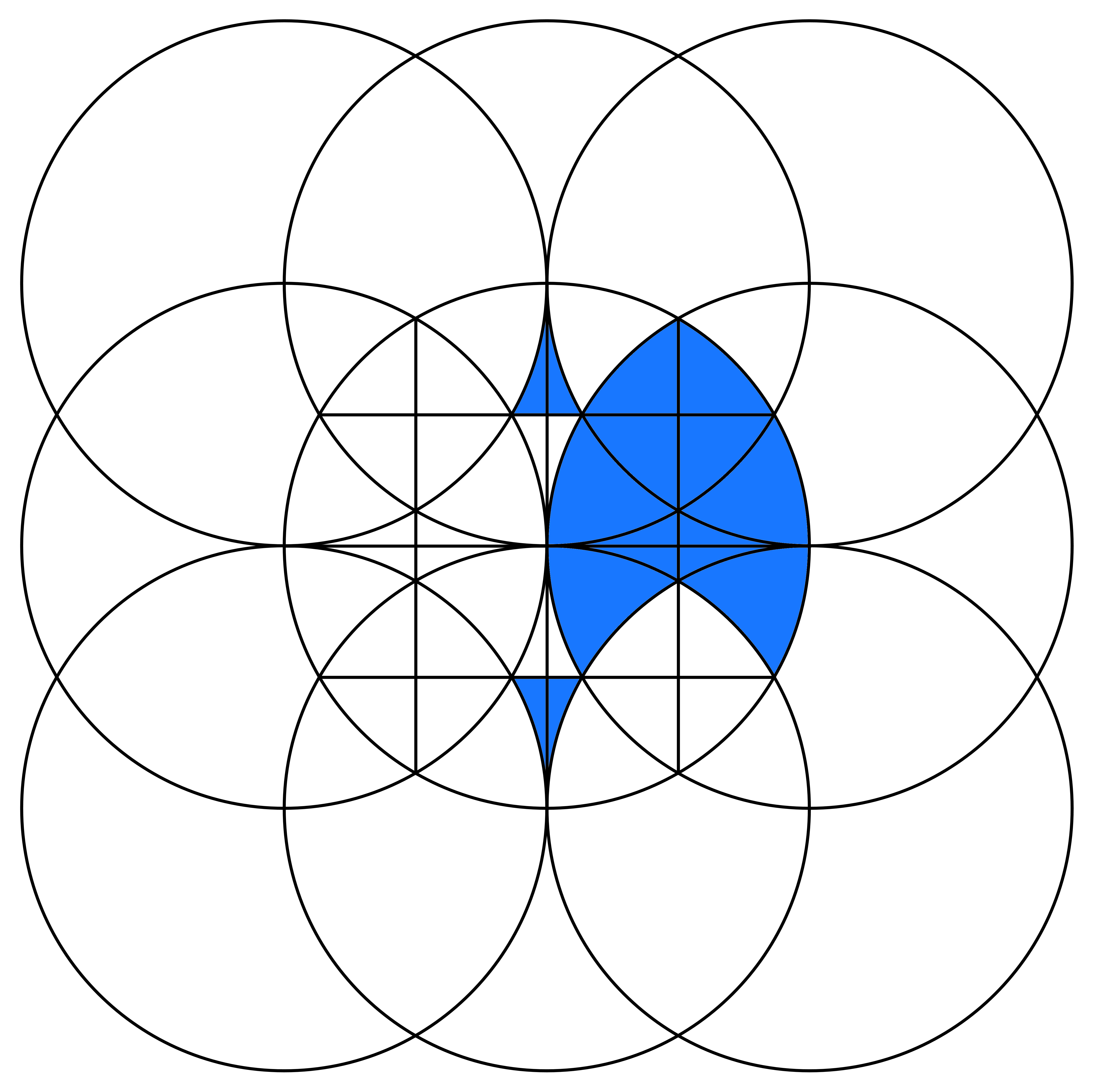}
    \includegraphics[width=.3\textwidth]{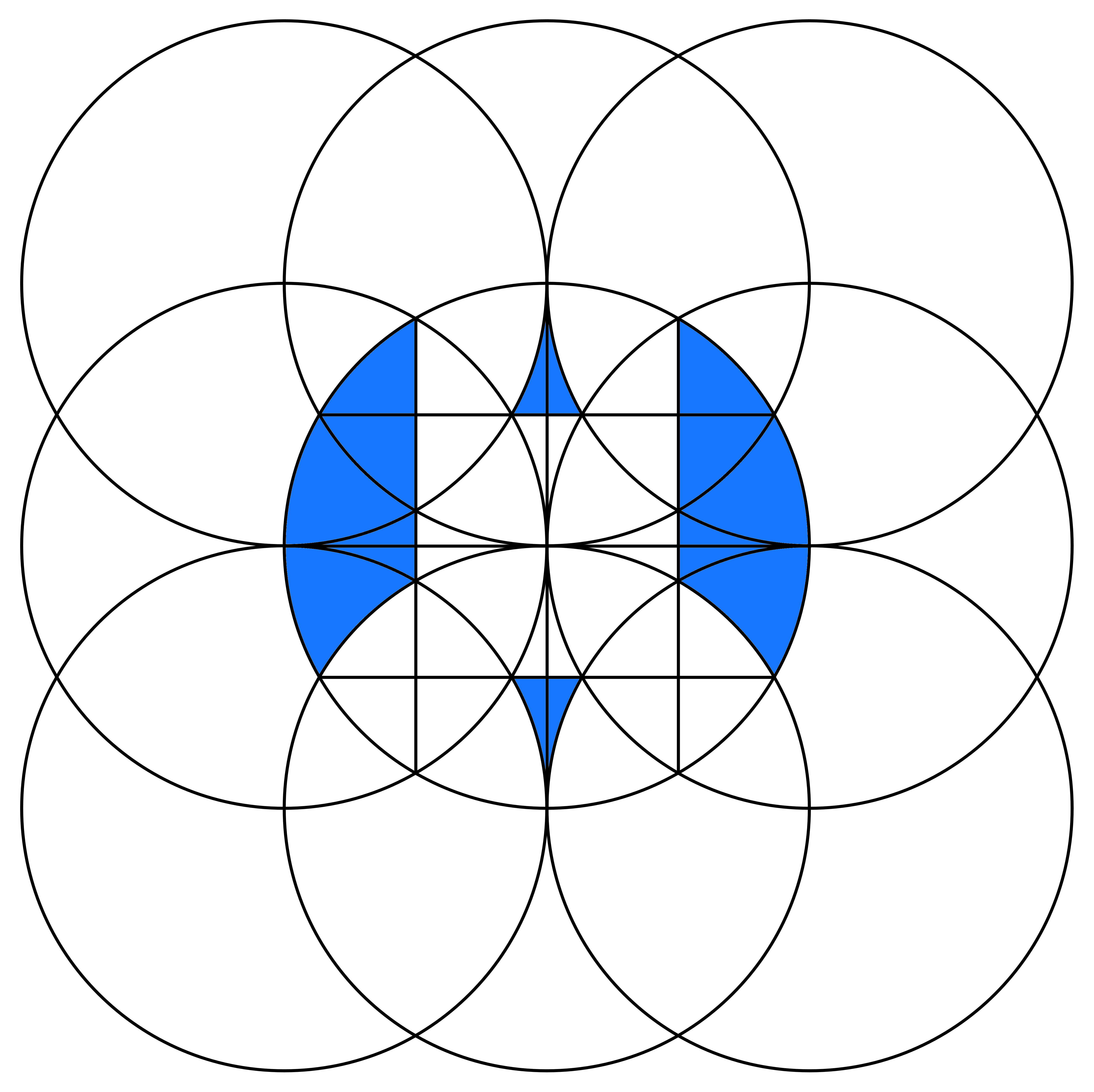}
    \caption{In addition to the Dirichlet region $[-.5,.5)\times [-.5,.5)$, one obtains the finite range property (B) for any fundamental domain bounded by the type-1, type-2, and type-3 regions. We show three such regions $K$ (blue subset) for $\Zee=\Z^2$, including the region corresponding to the chevron CFs (left figure) \cite{lukyanenko_vandehey_2022}. All circles shown have radius 1, and the figures are centered at the origin.}
    \label{fig:notBatman}
\end{figure}

\subsection{Properties of the Jacobian: Conditions (A), (C), (D), (G)}

\begin{lemma} \label{lemma:condition A}Condition (A) is satisfied.
\end{lemma}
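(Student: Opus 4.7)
The plan is to verify each clause of Condition (A) directly from the definition $T_a(x) = \iota(x) - a$, using the norm-Euclidean hypothesis ($\operatorname{rad}(K)<1$), the assumed order-2 property of $\iota$, and the explicit form $\iota(x) = \mathcal{O}(x)/|x|^2$ with $\mathcal{O}$ orthogonal. The key preliminary step will be to show that any nonempty $C_a$ is bounded away from the origin, so that all subsequent smoothness and Jacobian computations are valid.

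First I would establish this preliminary bound. For $x \in C_a \subset K$ the norm-Euclidean assumption gives $|x| \le \operatorname{rad}(K) < 1$, so $|\iota x| = 1/|x| > 1$ by \eqref{eq:inversion formula}; while $\iota x - a \in K$ forces $|\iota x - a| \le \operatorname{rad}(K) < 1$. Hence $a \neq 0$, and moreover $|x| = 1/|\iota x| \ge (|a| + \operatorname{rad}(K))^{-1} > 0$ uniformly on $C_a$, so $0 \notin \overline{C_a}$.

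With this in hand, $\iota$ restricted to $C_a$ is a rational function in the coordinates with a nonvanishing denominator, so $T_a = \iota - a$ has continuous first order partial derivatives on $C_a$. Injectivity of $T_a$ is equivalent to injectivity of $\iota$: if $\iota x = \iota y$, then applying $\iota$ and using $\iota \circ \iota = \id$ yields $x = y$. For the Jacobian, the distance identity \eqref{eq:inversion formula} shows that $\iota$ is conformal at each nonzero $x$ with local scale factor $1/|x|^2$; infinitesimally this means $|D\iota(x) v| = |v|/|x|^2$ for every tangent vector $v$, and consequently $|\det DT_a(x)| = |\det D\iota(x)| = |x|^{-2d}$, which is nonzero throughout $C_a$.

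There is essentially no genuine obstacle here: once the preliminary separation of $C_a$ from $0$ is recorded, every remaining assertion is an unpacking of definitions together with the explicit conformal form of inversion. I expect the conformal Jacobian formula $|\det D\iota(x)| = |x|^{-2d}$ to reappear as the computational engine behind the subsequent analysis of $\omega_s$ in conditions (C), (D), and (F).
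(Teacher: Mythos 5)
Your proof is correct and takes essentially the same route as the paper's: both rely on the decomposition $\iota(x)=\mathcal O(x)/|x|^2$ for smoothness and on conformality for the Jacobian formula $|\det D\iota(x)|=|x|^{-2d}$, which the paper defers to the next lemma (Lemma~\ref{lemma:inversiondistortion}) and you compute directly. Your preliminary observation that $C_a$ is uniformly bounded away from the origin (so $a\neq 0$ and the rational expression for $\iota$ has nonvanishing denominator on $C_a$) is a nice explicit justification for a step the paper treats as immediate.
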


\begin{proof}
The fact that the mappings $T_a:C_a\rightarrow K$ are one-to-one and continuous follows immediately from the definitions of $T_a$ and $C_a$. Any inversion $\iota$ can be seen as the composition of a orthogonal transformation with the inversion $x\mapsto x/|x|^2$, thus $T_a$ has continuous first order partial derivatives. The fact that $\det DT_a$ is non-zero is a special case of our next lemma.
\end{proof}

\begin{lemma}
\label{lemma:inversiondistortion} Let $s$ be a string with $|s|=n$. For $x\in  C_{s}$, we have that
\(
\omega_s(T^n  x) = \prod_{i=1}^{n} |T^{i-1} x|^{2d}
\)
where $d$ is the dimension of the ambient space over $\mathbb{R}$.
\end{lemma}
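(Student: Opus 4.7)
The plan is to apply the chain rule to the iterated inverse branch and then compute the Jacobian of a single inversion pointwise using the distance identity \eqref{eq:inversion formula}.

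First, I would unwind the composition. Since $T_s = T_{a_n} \circ T_{a_{n-1}} \circ \cdots \circ T_{a_1}$ on $C_s$, its inverse is $T_s^{-1} = T_{a_1}^{-1} \circ T_{a_2}^{-1} \circ \cdots \circ T_{a_n}^{-1}$. Tracing the orbit: when $y = T^n x$, successive applications of $T_{a_n}^{-1}, T_{a_{n-1}}^{-1}, \ldots$ recover $T^{n-1}x, T^{n-2}x, \ldots, x$. The chain rule therefore gives
\(
\omega_s(T^n x) = \bigl|\det DT_s^{-1}(T^n x)\bigr| = \prod_{i=1}^{n} \bigl|\det DT_{a_i}^{-1}(T^i x)\bigr|.
\)

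Next, I would reduce each factor to a Jacobian of $\iota$. Because $T_a(x) = \iota(x) - a$ and $\iota$ is order-$2$, one has $T_a^{-1}(y) = \iota(y + a)$ and hence $DT_a^{-1}(y) = D\iota(y+a)$. Evaluated at $y = T^i x$ with $a = a_i = [\iota T^{i-1}x]$, the argument $T^i x + a_i$ is exactly $\iota(T^{i-1}x)$, so each factor becomes $|\det D\iota(\iota(T^{i-1}x))|$.

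The main computational step is to show that for any inversion $\iota$ as in \eqref{eq:inversion formula} and any nonzero point $p \in X$,
\(
\bigl|\det D\iota(p)\bigr| = \frac{1}{|p|^{2d}}.
\)
Here I would use that inversions have the form $\iota(x) = \mathcal{O}(x)/|x|^2$ for some orthogonal $\mathcal{O}$, so in particular they are conformal and $D\iota(p)$ is a scalar multiple of an orthogonal transformation. The scaling factor is pinned down by the distance identity: for a small tangent vector $v$,
\(
|D\iota(p)\, v| = \lim_{t \to 0} \frac{d(\iota(p+tv), \iota p)}{t} = \lim_{t \to 0} \frac{d(p+tv, p)}{t\,|p+tv|\,|p|} = \frac{|v|}{|p|^{2}},
\)
so $D\iota(p)$ scales all lengths by $1/|p|^2$, giving $|\det D\iota(p)| = 1/|p|^{2d}$.

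Finally, substituting $p = \iota(T^{i-1}x)$ and using $|\iota(T^{i-1}x)| = 1/|T^{i-1}x|$, each factor equals $|T^{i-1}x|^{2d}$, and multiplying yields the claimed formula. The only nontrivial step is the conformal Jacobian computation; once that is in hand the rest is bookkeeping via the chain rule, and the nonvanishing of $\det DT_a$ asserted at the end of Lemma \ref{lemma:condition A} follows as the $n = 1$ case.
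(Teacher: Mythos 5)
Your proposal is correct and follows essentially the same route as the paper: decompose $T_s^{-1}$ into single inverse branches via the chain rule, note that translation has Jacobian $1$, and use the inversion identity \eqref{eq:inversion formula} together with conformality to read off the Jacobian factor $|y|^{-2d}$ for a single inversion. The only cosmetic difference is that you make the conformal Jacobian computation fully explicit (via the directional-derivative limit and substituting $p=\iota(T^{i-1}x)$), where the paper states it more tersely in terms of the distortion factor of small balls.
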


\begin{proof}
    We note that $\omega_s=\norm{\det DT^{-1}_{s}}$ measures the volume distortion of the mapping $T^{-1}_{s}$. This mapping is composed of translations, which do not alter volume, and inversions. By the inversion  identity \ref{eq:inversion formula}, for any point $y$ we have that $\iota(B(y,\epsilon))$ is approximated by $B(\iota(y), \norm{y}^{-2}\epsilon)$, for a distortion factor of $|y|^{-2d}$.  Thus, for a single digit $a$, we have $D_yT^{-1}_a=\norm{y+a}^{-2d}=\norm{\iota(y+a)}^{2d}=\norm{T_a^{-1}(y)}^{2d}$.    
    If $s=a_1a_2 \dots a_n$, then by the chain rule, we obtain:
\(
    |\det D_{T^n x}T^{-1}_{s} | &=|\det D_{T^n x}(T^{-1}_{a_1}T^{-1}_{a_{2}}\dots T^{-1}_{a_n}) |\\
    &= \prod_{i=1}^n |\det D_{T^{i} x}T^{-1}_{a_i} |\\
    &= \prod_{i=1}^n |T^{i-1}x |^{2d},
\)
    as desired.
\end{proof}

\begin{lemma} \label{lemma:condition C}
Assume $\Zee$ is norm-Euclidean. Then condition (C), R{\'e}nyi's condition, is satisfied.
\end{lemma}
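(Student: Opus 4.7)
The plan is to express the Jacobian as an explicit product via Lemma \ref{lemma:inversiondistortion}, and then show that when two preimages in $C_s$ are iterated forward to meet at the same $U_j$, their corresponding trajectories $\{x_i\}$ and $\{x'_i\}$ are exponentially close. From there the logarithmic ratio of the two Jacobians telescopes into a geometric series whose sum is bounded independent of $|s|$.

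More concretely, fix a string $s=a_1\cdots a_n$ and take two points $y,y'\in U_j=T^n C_s$. Writing $x_i=T^i T_s^{-1}y$ and similarly $x'_i$ for the trajectories from the preimages, we have $x_n=y$, $x'_n=y'$, and $x_{i-1}=\iota(x_i+a_i)$. Lemma \ref{lemma:inversiondistortion} gives $\omega_s(y)=\prod_{i=1}^n |x_{i-1}|^{2d}$ and the analogous formula for $y'$. The norm-Euclidean assumption $\rad(K)<1$ enters twice: first, since $x_{i-1}\in K$ we have $|x_{i-1}|\le \rad(K)$, hence $|x_i+a_i|=|x_{i-1}|^{-1}\ge \rad(K)^{-1}>1$, and similarly for $x'_i$; second, the inversion identity \eqref{eq:inversion formula} then yields
\[
d(x_{i-1},x'_{i-1})=\frac{d(x_i,x'_i)}{|x_i+a_i|\,|x'_i+a_i|}\le \rad(K)^2\, d(x_i,x'_i),
\]
so iterating backward from $i=n$ gives the exponential contraction $d(x_i,x'_i)\le \rad(K)^{2(n-i)}\operatorname{diam}(K)\le 2\rad(K)^{2(n-i)+1}$.

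Next I estimate $\log \omega_s(y)-\log \omega_s(y')=2d\sum_{i=1}^n \log(|x'_i+a_i|/|x_i+a_i|)$. Since $|x_i+a_i|,|x'_i+a_i|\ge 1/\rad(K)$, the mean value theorem applied to $\log$ gives
\[
\left|\log\frac{|x'_i+a_i|}{|x_i+a_i|}\right|\le \frac{\bigl||x'_i+a_i|-|x_i+a_i|\bigr|}{\min(|x_i+a_i|,|x'_i+a_i|)}\le \rad(K)\, d(x_i,x'_i).
\]
Combining with the contraction estimate and summing the geometric series $\sum_{j=0}^{n-1}\rad(K)^{2j}\le (1-\rad(K)^2)^{-1}$ produces a bound
\[
\bigl|\log \omega_s(y)-\log \omega_s(y')\bigr|\le \frac{4d\,\rad(K)^2}{1-\rad(K)^2},
\]
which is independent of $n$ and of $s$. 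Exponentiating gives R\'enyi's condition with $L=\exp\!\bigl(4d\,\rad(K)^2/(1-\rad(K)^2)\bigr)$.

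There is no serious obstacle here once the backward contraction is in place; the only point requiring care is ensuring that the denominators $|x_i+a_i|$ stay uniformly bounded away from zero, which is precisely what the norm-Euclidean hypothesis buys. The same setup will later be reused for the stronger statements in conditions (D), (F), and (G), so it is worth recording these exponential bounds cleanly at this stage.
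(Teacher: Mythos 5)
Your proof is correct and takes essentially the same route as the paper's: both exploit Lemma \ref{lemma:inversiondistortion} to write $\omega_s$ as a product over the orbit, use the inversion identity and $\rad(K)<1$ to obtain the backward geometric contraction $d(T^ix,T^iy)\le \rad(K)^{2(n-i)}d(T^nx,T^ny)$, and sum the resulting geometric series to get $L=\exp\bigl(4d\,\rad(K)^2/(1-\rad(K)^2)\bigr)$. The only cosmetic difference is that you pass to logarithms and apply the mean value theorem, whereas the paper uses the triangle inequality on the ratio $|T^ix|/|T^iy|$ and then $\prod(1+u_i)\le \exp(\sum u_i)$, arriving at the identical constant.
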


\begin{proof}
Let $s$ be a string with $|s|=n$, and let $x, y\in C_{s}$. By reindexing Lemma \ref{lemma:inversiondistortion}, we have
\(
\frac{\omega_s(T^n x)}{\omega_s(T^n y)} = \left(\prod_{i=0}^{n-1} \frac{\norm{T^ix}}{\norm{T^iy}}\right)^{2d}.
\)
By the triangle inequality we have:
\(
\frac{\norm{T^i x}}{\norm{T^i y}} &\leq 1 + \frac{d(T^i x, T^i y)}{\norm{T^i y}}.
\)
Furthermore, for $i\neq n$, we apply the inversion formula \eqref{eq:inversion formula} repeatedly to obtain
\[
d(T^ix, T^i y) &= d(\iota T^i x, \iota T^i y) |T^i x||T^i y| \\&= d(\iota T^i x- a_{i+1},\iota T^i y - a_{i+1} )|T^i x||T^i y|\notag\\&=  d(T^{i+1}x,T^{i+1}y) |T^i x||T^i y| \notag  \\
&= d(T^{i+2}x,T^{i+2}y) |T^i x||T^i y|  |T^{i+1} x||T^{i+1} y| \notag \\
&=\cdots \notag \\
&= d(T^n x, T^n y ) \prod_{j=i}^{n-1} |T^j x||T^j y|.\label{eq:repeated inversion formula} 
\]
Thus we have that
\(\frac{d(T^i x, T^i y)}{\norm{T^i y}}&\leq d(T^n x, T^n y) \text{rad}(K)^{2(n-i)-1} \leq 2 \text{rad}(K)^{2(n-i)}.
\)
Returning to the Jacobian, we have
\(
\frac{\omega_s(T^n x)}{\omega_s(T^n y)} &=  \left(\prod_{i=0}^{n-1} \frac{\norm{T^ix}}{\norm{T^iy}}\right)^{2d}     \leq \prod_{i=0}^{n-1} \left(1+2 \text{rad}(K)^{2(n-i)}\right)^{2d}\\
    &\leq \exp \left(2d \sum_{i=0}^{n-1}2 \text{rad}(K)^{2(n-i)}\right)  = \exp \left( 4d \sum_{i=1}^{n} \text{rad}(K)^{2i}\right)\\
    &\leq \exp \left( 4d \sum_{i=1}^{\infty}\text{rad}(K)^{2i}\right) = \exp \left( \frac{4d \text{rad}(K)^2}{1-\text{rad}(K)^2}\right),
\)
which is finite and uniform over all $s$ as desired.
\end{proof}

\begin{lemma}\label{lemma:condition G}
Assume  $\mathcal{Z}$ is norm-Euclidean. Then condition (G) is satisfied.
\end{lemma}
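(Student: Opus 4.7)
The plan is to reuse the iterated inversion identity \eqref{eq:repeated inversion formula} established in the proof of Lemma \ref{lemma:condition C}, but applied with starting index $i = 0$ rather than a generic $i$. Tracing through the same telescoping cascade of inversion identities, one obtains
\[
d(x,y) \;=\; d(T^n x, T^n y)\prod_{j=0}^{n-1}|T^j x|\,|T^j y|.
\]
From this formula, all that remains is to bound the product on the right.

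For $0 \le j \le n-1$, since $x, y \in C_s$ they share the first $n$ digits of their CF expansions, so the iterates $T^j x$ and $T^j y$ both lie in $K$, and therefore $|T^j x|, |T^j y| \le \rad(K)$. The norm-Euclidean hypothesis gives $\rad(K) < 1$, so
\[
\prod_{j=0}^{n-1}|T^j x|\,|T^j y| \;\le\; \rad(K)^{2n} \;\le\; 1,
\]
and I conclude $d(x,y) \le d(T^n x, T^n y)$, so condition (G) holds with $R_2 = 1$.

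There is essentially no obstacle here, since the iterated inversion identity from the proof of R\'enyi's condition already encodes all the geometric information needed. The point worth emphasizing is that the inversion identity is actually a strict contraction at the level of $T$ rather than merely at the level of $T^{-1}$: the product telescopes all the way down to $\rad(K)^{2n}$ and not just to an unspecified positive constant, which is what lets us take $R_2$ as small as $1$.
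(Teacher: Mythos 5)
Your proof is correct and follows essentially the same route as the paper: both apply the iterated inversion identity \eqref{eq:repeated inversion formula} starting at $i=0$ to write $d(x,y) = d(T^n x, T^n y)\prod_{j=0}^{n-1}|T^j x||T^j y|$, then use $|T^j x|, |T^j y| \le \rad(K) < 1$ (the norm-Euclidean hypothesis) to bound the product by $1$, giving $R_2 = 1$. The paper's proof is a one-line version of exactly this argument.
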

\begin{proof}
Let $s$ be a string with $|s|=n$, and let  $x,y\in C_s$. Repeated applications of the inversion formula as in \eqref{eq:repeated inversion formula} give
\(
d(x,y) = \left( \prod_{i=0}^{n-1} |T^i x||T^i y| \right)d(T^{n}x,T^{n}y) < d(T^{n}x,T^{n}y),
\)
as desired.
\end{proof}

\begin{lemma} \label{lemma:condition D}
Assume $\Zee$ is norm-Euclidean. Then condition (D) is satisfied with
\(
\sigma(m) \le 2 \rad(K)^{2m+1}
\)
\end{lemma}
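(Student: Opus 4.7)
The plan is to adapt the calculation from the proof of condition (G). For a cylinder $C_s$ with $|s|=n$ and any two points $x, y\in C_s$, iterating the inversion identity \eqref{eq:inversion formula} as in \eqref{eq:repeated inversion formula} yields
\[
d(x,y) = \left(\prod_{i=0}^{n-1} |T^i x|\,|T^i y|\right) d(T^n x, T^n y).
\]

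Next I would bound each factor using the geometry of $K$. Since $T$ maps $K$ into itself, every iterate $T^i x$ and $T^i y$ lies in $K$, so $|T^i x|, |T^i y|\le \rad(K)$, giving $\prod_{i=0}^{n-1}|T^i x|\,|T^i y| \le \rad(K)^{2n}$. The final factor is controlled by the triangle inequality applied to points of $K$: $d(T^n x, T^n y) \le |T^n x| + |T^n y|\le 2\rad(K)$. Combining these estimates yields $d(x,y) \le 2\rad(K)^{2n+1}$ uniformly over $x, y \in C_s$ and over all digit strings $s$ of length $n$, which is exactly the stated bound on $\sigma(m)$. The norm-Euclidean assumption guarantees $\rad(K) < 1$, so the right-hand side tends to $0$ as $m \to \infty$, verifying condition (D).

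There is no real obstacle here: the estimate is essentially a one-line consequence of Lemma \ref{lemma:condition G}, once we observe that the forward $T$-orbit of any point of $K$ stays in $K$, so that each of the $2n$ norm factors in the product contributes at most $\rad(K)$. The bound $2\rad(K)^{2m+1}$ simply records the $2n$ factors of $\rad(K)$ from the forward orbits, together with the extra $\rad(K)$ coming from the diameter of $K$.
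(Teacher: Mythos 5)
Your proof is correct and is exactly the paper's argument: apply \eqref{eq:repeated inversion formula} with $i=0$, bound each of the $2n$ norm factors by $\rad(K)$, and bound $d(T^n x, T^n y)$ by $2\rad(K)$. You have simply spelled out the details the paper leaves implicit.
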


\begin{proof}
For $x,y\in C_{s}$ with $|s|=n$, \eqref{eq:repeated inversion formula} applied with $i=0$ gives
\(
d(x,y) \le 2 \rad(K)^{2n+1}.
\tag*{\qedhere} 
\)
\end{proof}

\subsection{Structure of Cylinder Sets: Conditions (B), (E), and (H)} 

We showed in Lemma \ref{lemma:FRisSer} that serendipity implies the finite range condition (B).

We next use the results of Conway-Sloane \cite{CS} to show that under our assumptions the Dirichlet region $K$ for $\Zee$ is the intersection of half-spaces  corresponding to the unit-norm generators of $\Zee$,
giving us a concrete description of $K$.

To begin with, we need a useful fact.

\begin{lemma}\label{lemma:half-directional}
Suppose $\mathcal{Z}$ is an integral lattice. Then for any $z,w\in \mathcal{Z}$, we have $2z\cdot w\in \mathbb{Z}$.
\end{lemma}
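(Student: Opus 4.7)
The plan is to use the polarization identity applied to the vector $z+w$, which lies in $\mathcal{Z}$ since $\mathcal{Z}$ is a (discrete additive) subgroup. Specifically, I would write
\(
(z+w)\cdot(z+w) = z\cdot z + 2 z\cdot w + w\cdot w,
\)
and then solve for the cross term:
\(
2 z\cdot w = (z+w)\cdot(z+w) - z\cdot z - w\cdot w.
\)
By the integrality assumption, each of $(z+w)\cdot(z+w)$, $z\cdot z$, and $w\cdot w$ lies in $\mathbb{Z}$, so their integer combination $2z\cdot w$ lies in $\mathbb{Z}$ as well. There is no real obstacle here; the only thing to verify is that $z+w\in\mathcal{Z}$, which is immediate from $\mathcal{Z}$ being an additive subgroup of $X$.
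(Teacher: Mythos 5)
Your proof is correct and uses exactly the same polarization-identity argument as the paper, expanding $|z+w|^2 = |z|^2 + 2z\cdot w + |w|^2$ and invoking integrality of norm-squares. Nothing further to add.
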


One interesting geometric consequence of this is that any two units are a multiple of 60 degrees or a multiple of 90 degrees from each other. In particular, the cross section of the lattice generated by two non-collinear units should be one of the two classical integral lattices on $\mathbb{R}^2$, either the square lattice, or the triangular lattice.

\begin{proof}
This follows from the fact that for any $z,w\in \mathcal{Z}$, we have that
\(
|z+w|^2 = |z|^2+2z\cdot w + |w|^2,
\)
and the fact that for any integral lattice, all norm-squares are integers.
\end{proof}

\begin{lemma}\label{lemma:ConwaySloane}
Let $\mathcal{Z}$ be an integral, unit-generated lattice. Then $\partial K$ consists of subsets of the hyperplanes $\{x: x \cdot z=1/2\}$ where $z\in \Zee$ ranges over the unit-norm elements of $\Zee$.
\end{lemma}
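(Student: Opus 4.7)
The plan is to show that $K$ equals $\tilde K := \{x : x\cdot u \leq 1/2 \text{ for all units } u \in \Zee\}$; once this is established, $\partial K$ automatically lies in the union of the defining unit hyperplanes, as desired.

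I will first rewrite the Dirichlet condition $d(x,0) \leq d(x,z)$ as $x\cdot z \leq |z|^2/2$, so that $K$ is the intersection of the half-spaces $\{x : x\cdot z \leq |z|^2/2\}$ over $z \in \Zee\setminus\{0\}$. It then suffices to check that for every $z \in \Zee$ with $n := |z|^2 \geq 2$, this constraint is implied by the unit constraints. I would proceed by induction on $n$, with the trivial base $n=1$ recording that $z$ is itself a unit.

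For the inductive step, I would invoke unit-generation to pick a minimal unit decomposition $z = u_1 + \cdots + u_k$. The key subclaim is that $u_i \cdot u_j \geq 0$ for all $i \neq j$: if instead $u_i\cdot u_j < 0$, then Lemma \ref{lemma:half-directional} forces $u_i \cdot u_j \leq -1/2$, so $|u_i+u_j|^2 = 2 + 2 u_i\cdot u_j \leq 1$, and integrality makes $u_i + u_j$ either zero (allowing both units to be dropped) or another unit (allowing the pair to be replaced by one), contradicting the minimality of $k$ in either case. The subclaim then yields $u_i\cdot z = 1 + \sum_{j \neq i} u_i\cdot u_j \geq 1$ for every $i$.

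Setting $u = u_1$ and $z' = z-u$, which is a nonzero element of $\Zee$ since $n \geq 2 > 1$, one computes $|z'|^2 = n - 2u\cdot z + 1 \leq n-1$, so the inductive hypothesis gives $x\cdot z' \leq |z'|^2/2$ for any $x$ satisfying all unit constraints. Combining this with $x\cdot u \leq 1/2$:
\(
x\cdot z = x\cdot u + x\cdot z' \leq \tfrac{1}{2} + \tfrac{1}{2}|z'|^2 = \tfrac{n}{2} + (1 - u\cdot z) \leq \tfrac{n}{2},
\)
which closes the induction. The main obstacle will be the subclaim on minimal decompositions: it genuinely needs both integrality (through Lemma \ref{lemma:half-directional}) and unit-generation, and without these one cannot rule out non-unit Voronoi-relevant vectors. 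Once the subclaim is in hand, the rest of the induction is essentially bookkeeping.
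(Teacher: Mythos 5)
Your proof is correct and takes a genuinely different route from the paper's. The paper invokes Theorem 9 of Conway--Sloane as a black box, which requires verifying that $\Lambda_r \subset \Lambda_1 + \cdots + \Lambda_1$ ($r$ summands); to do so it works with a $\mathbb{Z}$-linearly-independent unit decomposition $z=\sum c_i u_i$ and runs a descent on the products $c_ic_j$ (replacing $u_i,u_j$ by $u_i+u_j$ whenever $u_i\cdot u_j=-1/2$ and $c_ic_j>0$) to arrive at nonnegative pairwise inner products. You instead prove the statement directly: you reformulate it as $K=\bigcap_{u \text{ unit}}\{x: x\cdot u\le 1/2\}$ and show by induction on $|z|^2$ that every Dirichlet half-space constraint is implied by the unit constraints. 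Your key subclaim --- that a decomposition of $z$ into a \emph{minimal number} of units automatically has $u_i\cdot u_j\ge 0$ for all pairs --- is a cleaner replacement for the paper's $c_ic_j$-descent, since minimality of $k$ immediately rules out $u_i\cdot u_j\in\{-1,-1/2\}$ (which would let you delete or merge a pair) without any bookkeeping over coefficient signs or preservation of linear independence. The inductive step via $z'=z-u_1$, using $u_1\cdot z\ge 1$ to drop $|z'|^2$ by at least one, is tight and correct. The trade-off: the paper's proof leans on an established general theorem about Voronoi-relevant vectors, which makes the argument short given that reference; yours is fully self-contained and arguably more transparent about \emph{why} the unit constraints suffice, at the cost of reproving a special case of Conway--Sloane from scratch. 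Both use \emph{integrality} (via Lemma \ref{lemma:half-directional}) and \emph{unit-generation} in essentially the same places.
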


\begin{proof}
For each $r\in \mathbb{Z}_{\ge0}$, let $\Lambda_r=\{z\in \mathcal{Z}: z\cdot z = r\}$. 
By Theorem 9 of \cite{CS}, if we can show that $\Lambda_r \subset \Lambda_1+\Lambda_1+\dots+\Lambda_1$ (where there are $r$ terms in the sum), then the desired result holds. (The second condition of Theorem 9 of \cite{CS} is trivial for integral lattices.)

Fix $r>0$ and consider $z\in \Lambda_r$. Since $\mathcal{Z}$ is unit-generated and abelian, we know that we can express \(z=\sum_{i=1}^\ell c_i u_i, \qquad c_i\in \mathbb{Z}, \qquad u_i\in \Lambda_1\) as a linear combination of $\mathbb{Z}$-linearly independent units. We shall choose these vectors in a particular way. First of all, we may assume that all $c_i$ are non-negative: if any $c_i$ is negative, we can replace $c_i$ with $-c_i$ and $u_i$ with $-u_i$. 

We furthermore claim that we can choose the $u_i$'s so that $u_i\cdot u_j\ge 0$ whenever $c_ic_j>0$. Observe first that $u_i\cdot u_j \in\{-1,-1/2,0,1/2,1\}$ by Lemma \ref{lemma:half-directional}, and $\Z$-linear-independence further implies that for  $i\neq j$ we have $u_i\cdot u_j \in\{-1/2,0,1/2\}$. We therefore only need to resolve the case when $u_i\cdot u_j=-1/2$. In this case, we have that $u_i+u_j$ is again a unit, and replace either $u_i$ or $u_j$ with $u_i+u_j$ in the following way (recalling that $c_ic_j>0$):
\[
c_iu_i + c_ju_j = \begin{cases}
c_i(u_i+u_j)+(c_j-c_i) u_j, & \text{if }c_j\ge c_i,\\
c_j(u_i+u_j)+(c_i-c_j) u_i, & \text{if }c_i\ge c_j.
\end{cases}
\]
We note that this process will not alter the linear independence of the set of unit vectors. Moreover, this process shrinks the product $c_ic_j$ to $c_ic_j-c_i^2$ or $c_ic_j-c_j^2$ as appropriate.

We can repeatedly apply the replacement process in the previous paragraph so long as we find $u_i\cdot u_j=-1/2$ with $c_ic_j>0$. This process must eventually terminate due to the products $c_ic_j$ being non-negative integers that get smaller every time we iterate the replacement process.

So now let us assume that $z=\sum_{i=1}^\ell c_iu_i$ with $c_i\in \mathbb{Z}_{\ge 0}$ and $u_i\cdot u_j\ge 0$ whenever $c_ic_j>0$. Then we have
\(
r&= z\cdot z = \left( \sum_{i=1}^\ell c_iu_i\right)\cdot \left(\sum_{i=1}^\ell c_iu_i\right)\\
&= \sum_{i=1}^\ell c_i^2 + \sum_{i\neq j} 2c_ic_j u_i\cdot u_j\\
&\ge \sum_{i=1}^\ell c_i^2 \ge \sum_{i=1}^\ell c_i,
\)
where the last line holds because for non-negative integers $c_i^2\ge c_i$. Since each $u_i$ is a unit, $\sum_{i=1}^\ell c_i$ represents a number of unit vectors that can be added together to reach $z$. Thus, since $r\ge \sum_{i=1}^\ell c_i$ every $z\in \Lambda_r$ can be written as a sum of at most $r$ unit vectors, as desired.
\end{proof}

We can now prove property (B):
\begin{lemma}\label{lemma:Proving finite range}
Suppose a lattice $\mathcal{Z}$ is integral, nicely invertible with respect to an inversion $\iota$, norm-Euclidean, unit-generated, and 3-remote.
Then the finite range property is satisfied.
\end{lemma}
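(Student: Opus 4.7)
The plan is to prove the finite range property by verifying the equivalent serendipity condition and then invoking Lemma \ref{lemma:FRisSer}. The Dirichlet region $K$ is bounded by finitely many hyperplanes-and-spheres: by Lemma \ref{lemma:ConwaySloane}, $\partial K$ lies in the union of the hyperplanes $\{x : x\cdot u = 1/2\}$ with $u$ ranging over the (finitely many) unit lattice vectors. This puts us in the setting of Lemma \ref{lemma:FRisSer}, so it suffices to show that the increasing sequence $E_i = \bigcup_{k=0}^i T^k\partial K$ stabilizes.

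To do this, I would introduce three families of ``legal'' HASs that are preserved (in an appropriate sense) by the dynamics: \emph{type-1} hyperplanes $\{x: x\cdot u = 1/2\}$ with $u\in\Zee$ a unit, \emph{type-2} hyperplanes $\{x: x\cdot u = 0\}$ with $u\in\Zee$ a unit, and \emph{type-3} unit spheres $\{x: |x-c|=1\}$ with $c\in\Zee$ a lattice point of norm-square in $\{1,2\}$. By Lemma \ref{lemma:ConwaySloane}, $\partial K$ is a union of subsets of type-1 objects. Since $\Zee$ is discrete, the total collection of type-1, -2, -3 objects is finite, so by Lemma \ref{lemma:FSCcomplement} only finitely many FSCs can be assembled from them. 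Once I show that each iterate $T^k\partial K$ is contained in a union of such objects, the increasing union $E_i$ must stabilize after finitely many steps, giving serendipity.

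The core computation is the closure of the type-1/2/3 family under the branches $T_a(x)=\iota(x)-a$ with $\iota = \mathcal O/|\cdot|^2$. Using the standard identities, $\iota$ sends the type-1 hyperplane $x\cdot u = 1/2$ to the unit sphere $|y-\mathcal O(u)|=1$, sends the type-2 hyperplane $x\cdot u =0$ to $z\cdot \mathcal O(u)=0$, and sends the type-3 sphere $|x-c|=1$ to either $|y|=1$, a type-1 hyperplane $z\cdot \mathcal O(c)=1/2$, or a new unit sphere $|y-\mathcal O(c)|=1$, according to whether $|c|^2$ is $0$, $1$, or $2$. Under nice invertibility, $\mathcal O$ preserves $\Zee$ and norms, so sphere centers remain lattice points and hyperplane normals remain unit lattice vectors. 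Subsequent translation by $-a$ shifts a sphere center to another lattice point $\mathcal O(c)-a$, and shifts a hyperplane constant by $a\cdot\mathcal O(u)\in\frac12\Z$ (Lemma \ref{lemma:half-directional}); each case falls into our legal family or is disjoint from $K$.

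The key technical point — and the main obstacle — is using norm-Euclidean and 3-remoteness to rule out spheres of large center-norm. If the sphere center $w\in\Zee$ has $|w|^2\geq 4$, then $|w|\geq 2 > 1+\rad(K)$, so $d(w,K)>1$ and the sphere misses $K$; if $|w|^2=3$, then $d(w,K)\geq 1$ by 3-remoteness, so the only points of the sphere in $K$ are those realizing $d(w,K)=1$, which necessarily lie on a face of $K$ (a type-1 hyperplane) and therefore contribute nothing new to $E$. For hyperplanes, $|a\cdot \mathcal O(u)|\geq 1$ forces distance $\geq 1$ from the origin and so disjointness from $K$, while the remaining values $0$ and $\pm 1/2$ produce type-2 and type-1 hyperplanes respectively. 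This finite closure property, together with Lemma \ref{lemma:FSCcomplement}, yields serendipity and hence the finite range property via Lemma \ref{lemma:FRisSer}.
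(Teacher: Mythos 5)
Your overall route matches the paper's: reduce to serendipity via Lemma \ref{lemma:FRisSer}, introduce the same type-1/2/3 families of hyperplanes and spheres, identify $\partial K$ with type-1 objects via Lemma \ref{lemma:ConwaySloane}, and verify that each branch $T_a$ carries these families into themselves using nice invertibility, Lemma \ref{lemma:half-directional}, norm-Euclideanness, and 3-remoteness. The closure computations are essentially identical to the paper's. (One small slip: $|c|^2=0$ is not a possible type-3 centre, so your ``$|y|=1$'' case never arises.)

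There is, however, a genuine gap in the step where you conclude that the $E_i$ stabilize. You write that once each $T^k\partial K$ is contained in the finite union of legal HASs, ``the increasing union $E_i$ must stabilize after finitely many steps.'' That inference is not valid on its own: an increasing sequence of subsets of a fixed finite union of hyperplanes and spheres can grow forever (e.g.\ by repeatedly adjoining new arcs of a single circle). What is needed — and what the paper establishes by an induction at the end of its proof — is the stronger claim that each $E_i$ is itself one of the finitely many FSCs generated by the type-1/2/3 objects, not merely a subset of their union. Writing $E_i = E_{i-1}\cup\bigcup_{z\in\Zee}\overline K\cap(z+\iota E_{i-1})$, one checks inductively that each piece $\overline K\cap(z+\iota E_{i-1})$ is an FSC generated by the same HASs, so $E_i$ is a union of finitely many such FSCs and hence again one of the finitely many possibilities (Lemma \ref{lemma:FSCcomplement}); together with monotonicity this forces stabilization. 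Your closure computations supply exactly the raw material for that induction, but the FSC-structure claim still needs to be stated and carried out rather than deduced from containment alone.
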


\begin{proof}
    Since $K$ is a Dirichlet region for a discrete group, it is bounded by hyperplanes; and Lemma \ref{lemma:ConwaySloane} tells us that these are of the form $x\cdot z=1/2$ for units $z\in \Zee$. By Lemmas \ref{lemma:FSCcomplement} and \ref{lemma:FRisSer}, we need only show that the sets $E_i$ will eventually stabilize to a finite spherical complex (FSC) (see Definition \ref{defi:FSC}).
    
    In fact, we will show that the $E_i$'s are contained (possibly strictly, as in the case of Hurwitz CFs, Figure \ref{fig:planar Hurwitz}) in the intersection of $\overline K$ with the union of the following objects:
    \begin{itemize}
        \item (type-1) the hyperplanes $x\cdot z=1/2$ with $z\in \mathcal{Z}$ of norm $1$,
        \item (type-2) the hyperplanes $x\cdot z=0$ with $z\in \mathcal{Z}$ of norm $1$,
        \item (type-3) the spheres $S_1(z)$ with $z\in\mathcal{Z}$ of norm $1$ or $\sqrt{2}$.
    \end{itemize}
    Furthermore, from this we will show that each $E_i$ will be an FSC formed from the above HASs.
    
    The set $E_0=\partial K$ is bounded by type-1 objects by Lemma \ref{lemma:ConwaySloane}. Thus it suffices to show that if we apply $T$ to a point in any of these objects, the result is contained in the union of all the objects. We will once again work on the one-point compactification $\R^d\cup \infty$, so that $\iota$ interchanges $0$ and $\infty$. We will also use the fact that $\iota$ is conformal, so that it sends HASs to HASs.
    
    To begin with, consider $z\in\mathcal{Z}$ of norm $1$ and the hyperplane $x\cdot z=1/2$. The point on this hyperplane nearest the origin is $z/2$. If we invert this hyperplane by applying $\iota$, we must then end up with a sphere through the origin whose point farthest from the origin is $2\iota(z)$. In other words, this sphere is $S_1(\iota(z))$, and by our assumption of nice invertibility $\iota(z)$ is in $\mathcal{Z}$. 
    Thus, given a point $x$ in the hyperplane $x\cdot z=1/2$, we have that $\iota (x) \in S_1(\iota(z))$ and so $T(x)\in S_1(z')$ for some $z'\in \Zee$. Now, 3-remoteness implies that if $\norm{z'}\ge 3$, then $\overline K \cap S_1(z')$ is at most a single point in $\partial K$. In this case, $T(x)$ is in a type-1 object. Otherwise $\norm{z'}=1$ or $\norm{z'}=\sqrt{2}$, and $T(x)$ is in a type-3 object.
    
    Next consider a sphere $S_1(z)$, where $z\in\mathcal{Z}$ has norm $\sqrt{2}$. (We will return to the case of norm $1$ later.) Let $z^* = z/|z|$. The point on $S_1(z)$ nearest the origin is at $(\sqrt{2}-1)z^*$ and the point farthest from the origin is at $(\sqrt{2}+1)z^*$. Note that $(\sqrt{2}-1)^{-1}=\sqrt{2}+1$. Thus, $\iota S_1(z)$ is a sphere whose point nearest to the origin is $(\sqrt{2}-1)\iota( z^*)$ and whose point farthest from the origin is $(\sqrt{2}+1)\iota (z^*)$. In other words, $\iota S_1(z)=S_1(|z|^2\iota(z))$, and furthermore the nicely-invertible assumption gives  $|z|^2\iota(z)\in \mathcal{Z}$. Thus, by the same argument made in the previous paragraph, points in $S_1(z)\cap \overline K$ are mapped by $T$ to points in type-1 or type-3 objects.
    
    Next consider a hyperplane $x\cdot z=0$ with $z\in\mathcal{Z}$ of norm $1$, which is the perpendicular bisector between $z$ and $-z$. In particular, the hyperplane is perpendicular to the line between $z$ and $-z$ both at $0$ and at $\infty$. Since $\iota$ preserves the unit sphere and distances for points on the unit sphere, it follows that $\iota(z)$ and $\iota(-z)$ are antipodes, and therefore $\iota(-z)=-\iota(z)$. Furthermore, the line through $z$ and $-z$ (as well as through $0$ and $\infty$) is sent to the line between $\iota(z)$ and $-\iota(z)$. Since $\iota$ is conformal, we conclude that the hyperplane is mapped to the hyperplane $x \cdot \iota(z)=0$.
    Now we want to consider what happens when we translate pieces of the inverted hyperplane $P$ (with normal vector $\iota(z)$) by elements of $\mathcal{Z}$ to return to $K$. For any $w\in \Zee$, only motion along the normal vector affects the position of the hyperplane, so we have that $d(P, w+P)=\iota(z)\cdot w$.  By Lemma \ref{lemma:half-directional}, this distance will be a multiple of $1/2$. Thus, translates of the hyperplane will have the form $x\cdot \iota(z)= a/2$, where $a\in \mathbb{Z}$. Negating $\iota(z)$ if necessary we may assume $a\geq 0$. This leaves the options of a type-2 hyperplane through the origin or a type-1 hyperplane along the boundary of $K$. Higher values of $a$ are ruled out since they correspond to points outside $\overline K$. 
    
    Finally, consider a sphere $S_1(z)$ with $z\in \mathcal{Z}$ of norm $1$. This is a sphere through the origin, so its inverse is a hyperplane. Moreover, since the farthest point on the sphere is $2z$, the nearest point on the hyperplane is the point $\iota(z)/2$, making this the hyperplane $x\cdot \iota(z)=1/2$. By following the method of the previous paragraph, we conclude that $T(S_1(z))$ must consist of planar objects $x\cdot \iota(z)=1/2$, $x\cdot \iota(z) = 0$ or $x\cdot (-\iota(z))=1/2$.

    We have thus shown that $E$ is a subset of the union of all type-1, type-2, and type-3 objects. It remains to show that $E$ is, in fact, a finite FSC. To this end, work inductively. By assumption, $E_0=\partial K$ is an FSC. For $i\geq 1$, we may write $E_i=E_{i-1}\cup \cupover_{z\in \Zee} \overline K \cap (z+\iota E_{i-1})$. Working with each $z\in \Zee$ individually, we observe that $\overline K \cap (z+\iota E_{i-1})$ is one the finitely-many FSCs generated by the type-1, type-2, and type-3 objects (Lemma \ref{lemma:FSCcomplement}). Thus, in constructing $E_{i}$ we are in fact taking the union of finitely many different FSCs that are generated by the same hyperplanes and spheres, and therefore obtain once again a FSC generated by the same HASs. In particular, by Lemma \ref{lemma:FSCcomplement}, there are finitely many options for what $E_i$ could be, and since $E_{i-1}\subset E_i$, the $E_i$'s must eventually stabilize, as desired.
\end{proof}

\begin{remark}
In Lemma \ref{lemma:Proving finite range}, the only way for type-2 objects to appear is if there exist $z_1,z_2\in \mathcal{Z}$ with $z_1\cdot z_2\not\in \mathbb{Z}$. This explains why type-2 objects did not appear in Figure \ref{fig:planar Hurwitz}.
\end{remark}

We can extend the above reasoning to provide condition (E): each $U_i$ contains a full cylinder.

\begin{lemma}\label{prop:All Uj contain full cylinders}
Suppose a lattice $\mathcal{Z}$ is integral, nicely invertible with respect to an inversion $\iota$, norm-Euclidean, unit-generated, and 3-remote. Then condition (E) is satisfied.
\end{lemma}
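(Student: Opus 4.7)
To place a full cylinder inside each piece $U_j$, I will construct a full rank-$(n+1)$ cylinder $C_{a_1\cdots a_n b}\subset U_j$ whose last digit $b$ produces a small full rank-$1$ cylinder $C_b$ near the origin, and whose prefix $a_1,\ldots,a_n$ transports $C_b$ into $U_j$ via iterated inverse branches of $T$. The key ingredients are (i) the existence of full rank-$1$ cylinders $C_b$ near $0$, (ii) the density of backward orbits of a point in $K$ under inverse branches, and (iii) the finite-range property already established in Lemma \ref{lemma:Proving finite range}.

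\emph{Existence of $C_b$ in a single piece:} For $b\in\Zee$ with $|b|$ large, $C_b=\iota(K+b)$ lies in the ball $B(0,(|b|-\rad(K))^{-1})$; for $|b|\geq 2+\rad(K)$ this fits inside $B(0,1/2)\subset K$, since $\Zee$ is integral and unit-generated so the shortest lattice vector has length exactly $1$. Hence $C_b\subset K$ and $TC_b=K$. By the density of lattice directions $\{z/|z|:z\in\Zee\setminus\{0\}\}$ in $S^{d-1}$, I can further arrange $\iota(b)$ to lie in the interior of some piece $U_{j_0}$ of the serendipitous decomposition; since $C_b$ is a tiny neighborhood of $\iota(b)$ for $|b|$ large, shrinking further if needed, $C_b\subset U_{j_0}$. \emph{Transport:} Fix any $y_0\in C_b^\circ$ and an interior point $x_0\in U_j$ with $B(x_0,\delta)\subset U_j$. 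Using convergence of Iwasawa CF expansions in the norm-Euclidean setting \cite{lukyanenko_vandehey_2022}, together with the shrinking-diameter bound $\sigma(n)\leq 2\rad(K)^{2n+1}$ from Lemma \ref{lemma:condition D}, I would find a digit string $a_1,\ldots,a_n$ with $T_{a_1}^{-1}\cdots T_{a_n}^{-1}(y_0)\in B(x_0,\delta/2)$ and $n$ large enough that $\sigma(n)<\delta/2$; the rank-$n$ cylinder $C_{a_1\cdots a_n}$ containing this preimage then lies in $B(x_0,\delta)\subset U_j$, and $y_0\in T^n C_{a_1\cdots a_n}$. By finite range, $T^n C_{a_1\cdots a_n}$ equals some piece $U_{j_n}$; since $y_0\in U_{j_0}^\circ$ and the pieces are pairwise disjoint, $U_{j_n}=U_{j_0}\supset C_b$. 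Hence $C_{a_1\cdots a_n b}$ is nonempty, contained in $C_{a_1\cdots a_n}\subset U_j$, and satisfies $T^{n+1}C_{a_1\cdots a_n b}=TC_b=K$, so it is a full cylinder inside $U_j$.

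\emph{Main obstacle.} The delicate step is choosing $b$ so that $C_b$ lies in a single piece of the serendipitous decomposition. The point $0$ typically lies in $E$: as computed in the proof of Lemma \ref{lemma:Proving finite range}, $T(\partial K)$ contains unit spheres $S_1(\pm u)$ for each unit $u\in\Zee$, all of which pass through $0$. Consequently every small ball around $0$ meets several pieces, and a careless choice of $b$ would split $C_b$ across the boundary $E$. The remedy — selecting the direction $\iota(b)/|\iota(b)|$ to avoid $E$ and land in the interior of one piece — relies on the density of lattice directions in $S^{d-1}$, plus the fact that along a fixed ray through the origin, the scaled lattice points $kb$ give $\iota(kb)=\iota(b)/k\to 0$, allowing $|kb|$ to be made arbitrarily large while $\iota(kb)$ stays in the chosen sector. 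Once this placement is achieved, the rest of the argument is routine.
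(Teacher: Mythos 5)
Your overall strategy — fix a full rank-$1$ cylinder $C_b$ near the origin, contained in a single piece of the serendipitous decomposition, then ``transport'' it into a given $U_j$ via inverse branches — is a genuinely different route from the paper's, which constructs a full rank-$2$ cylinder $C_{ab}\subset U_j$ directly from the geometry of $\iota U_j$ near infinity. Your setup step (the existence and placement of $C_b$) is essentially correct, and you rightly flag and resolve the real subtlety there: $0\in E$ so one must choose the direction of $\iota(b)$ transversal to all the hyperplanes and spheres of $E$ through the origin, and the quadratic shrinkage of $\operatorname{diam} C_b$ relative to $\lvert\iota(b)\rvert$ then keeps $C_b$ inside one component $B_{i_0}$. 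One small correction of terminology: it is the connected components $B_i$ of $K\setminus E$, not the sets $U_j$, that are pairwise disjoint; the $U_j$ can and do overlap (e.g.\ $K$ itself is one of the $U_j$). The disjointness argument you invoke should be run on the $B_i$'s: if $y_0\in B_{i_0}$ and $y_0\in T^nC_s=U_{j'}$, and $U_{j'}$ is a union of $B_i$'s, then $B_{i_0}\subset U_{j'}$. With that fix, that part of the logic is fine.

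The genuine gap is in the transport step. You write that by ``convergence of Iwasawa CF expansions'' and shrinking cylinder diameters you would find a digit string $a_1,\dots,a_n$ with $T_{a_1}^{-1}\cdots T_{a_n}^{-1}(y_0)\in B(x_0,\delta/2)$. But $T_{a}^{-1}$ is the inverse of the injective restriction $T_a\colon C_a\to T_a(C_a)$, i.e.\ a partial map defined only on $T_a(C_a)$; correspondingly $T_{a_1}^{-1}\cdots T_{a_n}^{-1}(y_0)$ is defined exactly when $y_0\in T^n C_{a_1\cdots a_n}$. For the cylinder $C_s$ of rank $n$ containing a chosen $x_0\in U_j$, the image $T^n C_s$ is one of the finitely many sets $U_{j'}$, and there is no a priori reason that $y_0\in U_{j'}$. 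Replacing $T_a^{-1}$ by the everywhere-defined formula $y\mapsto\iota(y+a)$ does not help, because that formula can leave $K$ when $C_a$ is not full, after which subsequent inversions are no longer contractive. What you actually need is: for every $j$ there exists a cylinder $C_s\subset U_j$ with $B_{i_0}\subset T^{\lvert s\rvert}C_s$. That is a nontrivial irreducibility statement about the transition structure between cells, and establishing it is essentially what condition (E) is for — so the argument borders on circularity. The paper avoids this entirely: using $0\in\overline{U_j}$ it finds a large digit $a$ so that $(a+K)\cap\iota U_j$ is cut only by half-spaces, then (if $a+K\not\subset\iota U_j$) inverts the resulting region $K'$ to get a cone near infinity containing $b+K$ for some large $b$, yielding $C_{ab}\subset U_j$ full by direct construction, with $a,b$ depending on $j$. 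If you want to repair your approach, you would need to first prove such an irreducibility/transition fact by independent means; absent that, the direct geometric construction is the safer route.
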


\begin{proof}
We make use of the key ideas of the proof of Lemma \ref{lemma:Proving finite range}. There, we started with the hyperplanes $x\cdot z=1/2$ for $z$ of norm $1$, because $\partial K$ belongs to the union of these hyperplanes. Now we will start with the half-spaces $x\cdot z<1/2$ for $z$ of norm $1$, because $K$ is the intersection of these half-spaces, up to boundary. Analyzing how $T$ maps these objects as we did before, we see that any set $U_j$ is (up to boundary) a non-empty intersection finitely many sets of the form
    \begin{itemize}
        \item half-spaces $x\cdot z< 1/2$ with $z\in \mathcal{Z}$ of norm $1$,
        \item half-spaces $x\cdot z< 0$ with $z\in \mathcal{Z}$ of norm $1$,
        \item sphere-exteriors $\{ x: d(x,z)> 1\}$ with $z\in\mathcal{Z}$ of norm $1$ or $\sqrt{2}$.
    \end{itemize}

    We want to show that a given $U_j$ contains a full cylinder. So consider $\iota U_j$. 
    
    Observe first that, by the description above, the closure of $U_j$ contains the origin, and for any $r>0$, $\lambda(B_r(0)\cap U_j)>0$. Thus, $\iota U_j$ has positive measure on neighborhoods of $\infty$, which we will now exploit.
    
    Observe next that $\iota U_j$ is bounded by hyperplanes or $S_1(z)$ with $\norm{z}=1$ or $\norm{z}=\sqrt 2$.  Near $\infty$ (in particular, outside of the ball $B_3(0)$) the spheres do not restrict membership in $\iota U_j$, and we may imagine that $\iota U_j$ consists of the intersection of half-spaces of the form $x\cdot z<1/2$ and $x\cdot z<0$ for various $z\in \Zee$. This intersection has positive measure by the above argument about neighborhoods of $\infty$. We may therefore take a (large) digit $a\in \Zee$ such that $(a+K)\cap (\iota U_j)$ has positive measure and furthermore $(a+K)\cap (\iota U_j)$ is in fact simply the intersection of $a+K$ with half-spaces of the form $x\cdot z<1/2$ and $x\cdot z<0$ for various $z\in \Zee$.

    If $a+K\subset \iota U_j$, then $T^{-1}_a K = \iota(a+K) \subset U_j$, and this would imply that $C_a$ is a full cylinder and contained in $U_j$. However, due to the possibility that $\iota U_j$ is caught between two close half hyperplanes $1/2 \ge x\cdot z\ge 0$, we cannot guarantee this. If this happens, let  $K'=K \cap (\iota U_j - a)\subset K$ so that $a+K' = a+K \cap \iota U_j$. Then $K'$ is an intersection of half-planes of the form $x\cdot z<1/2$ or $x\cdot z <0$ for $z\in\mathcal{Z}$ of norm $1$. In particular, $K'$ can be defined without using any sphere-exteriors.
    
    Inverting $K'$, we obtain a set $\iota K'$ that can be defined without any half-spaces of the form $x\cdot z<1/2$, i.e., it is the intersection of sphere-exteriors and finitely-many half-spaces $x\cdot z<0$ through the origin. Looking again outside of the set $B_3(0)$, we see a cone, which contains arbitrarily large open balls. In particular, there is a (large) digit $b$ such that $b+K\subset \iota K'$. Thus, 
    \(
    C_{ab}= T_{a}^{-1} C_b \subset T_a^{-1} K' \subset U_j,
    \)
    so $C_{ab}$ is a full cylinder inside $U_j$, as desired.
\end{proof}

With the above results about the cylinder sets, condition (H) is immediate from the structure of their boundaries.
\begin{lemma} \label{lemma:condition H}
Suppose a lattice $\mathcal{Z}$ is integral, nicely invertible with respect to an inversion $\iota$, norm-Euclidean, unit-generated, and 3-remote. Then condition (H) is satisfied.
\end{lemma}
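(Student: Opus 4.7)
The plan is to exploit serendipity (Lemma \ref{lemma:Proving finite range}) together with the exponential shrinking of cylinders (Lemma \ref{lemma:condition D}) to show that the ``bad'' cylinders are trapped in a thin neighborhood of the codimension-one set $E$.

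First I would observe that if $C_s$ with $|s|=m$ is not contained in a single cell $F\in\mathcal F$, then $C_s$ must meet $\partial F$ for some cell, and since $\mathcal F$ is generated by the sets $U_1,\dots,U_J$, we have $\partial F\subset \cupover_j \partial U_j$. Each $U_j$ is (up to measure zero) of the form $T^n C_{s'}$ for some cylinder, so by Lemma \ref{lemma:boundaryIsInE}, $\partial U_j\subset E$. Thus any $C_s$ with $s\in\mathcal L_m$ satisfies $C_s\cap E\ne\emptyset$, and by Lemma \ref{lemma:condition D}, the diameter of $C_s$ is at most $\sigma(m)\le 2\,\rad(K)^{2m+1}$. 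Consequently,
\(
\cupover_{s\in\mathcal L_m} C_s \;\subset\; N_{\sigma(m)}(E),
\)
where $N_\epsilon(E)$ denotes the Euclidean $\epsilon$-neighborhood of $E$.

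Next I would bound $\lambda(N_\epsilon(E))$. By the proof of Lemma \ref{lemma:Proving finite range}, $E$ is a finite spherical complex built from finitely many type-1, type-2, and type-3 HASs intersected with the bounded set $\overline K$. Each such piece lies in a single hyperplane or unit sphere, which is a smooth codimension-one submanifold of $\R^d$. Since each is bounded (being inside $\overline K$), a standard volume estimate gives $\lambda(N_\epsilon(\text{piece}))=O(\epsilon)$ as $\epsilon\to 0$, with the implicit constant depending only on the piece. Summing over the finitely many pieces yields $\lambda(N_\epsilon(E))=O(\epsilon)$ uniformly in $\epsilon$.

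Combining the two bounds, and using that the cylinders $C_s$ for distinct $s$ with $|s|=m$ have pairwise measure-zero intersections,
\(
\gamma(m)=\sum_{s\in\mathcal L_m}\lambda(C_s)=\lambda\!\left(\cupover_{s\in\mathcal L_m}C_s\right)\le \lambda(N_{\sigma(m)}(E))=O(\sigma(m))=O(\rad(K)^{2m}),
\)
which tends to zero as $m\to\infty$ since $\rad(K)<1$ by the norm-Euclidean hypothesis. The one subtlety to be a little careful about is the codimension-one volume estimate: a hyperplane piece trivially sits in a slab of width $2\epsilon$ meeting the bounded set $\overline K$, while for a unit sphere piece one compares $N_\epsilon(S_1(z)\cap\overline K)$ to a spherical shell of thickness $2\epsilon$. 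Neither requires more than an elementary Fubini argument, so this is the main routine step rather than a real obstacle.
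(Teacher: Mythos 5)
Your proof is correct and follows essentially the same approach as the paper's: trap the cylinders in $\mathcal L_m$ inside a $\sigma(m)$-neighborhood of the codimension-one set of cell boundaries, then bound the volume of that neighborhood using that the set is a finite union of bounded pieces of hyperplanes and spheres. The only small difference is stylistic: the paper stratifies the boundary set by dimension and records the full estimate $\sum_{i}\mu_i(M_i)\sigma(m)^{d-i}$, whereas you keep only the leading $O(\sigma(m))$ bound, which suffices and is arguably cleaner; you also spell out more explicitly why the cell boundaries lie in $E$ (via Lemma \ref{lemma:boundaryIsInE}), a step the paper leaves implicit by redefining $E=\cupover_{F\in\mathcal F}\partial F$ in passing.
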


\begin{proof}
Recall that condition (H) says that $\lim_{n\to \infty} \gamma(n)=0$, where $\gamma(n)$ is the total Lebesgue measure of rank-$n$ cylinders which are not fully contained inside any cell $F\in \mathcal{F}$.
This is at most the volume of the $\sigma(n)$-neighborhood of the boundary set $E=\cupover_{F\in \mathcal F} \partial F$, where $\sigma(n)$ is the cylinder diameter appearing in Condition (D) which we proved in Lemma \ref{lemma:condition D}. As we saw in the proof of  Proposition \ref{lemma:Proving finite range}, this boundary region naturally decomposes into a union of smooth disconnected manifolds $M_{d-1}, \ldots, M_0$ consisting of the relatively-open subsets of codimension-1 spheres and hyperplanes in dimension $k-1$ as well as the intersections of their closures in lower dimensions. By viewing each of these embedded manifolds in charts, one shows that the volume of the $\sigma(n)$-neighborhood of $E$ is bounded above by $\sum_{i=0}^{d-1} \mu_i(M_i)\sigma(n)^{k-i}$, up to multiplicative constants that do not depend on $\sigma(n)$. Since $\sigma(n)\to 0$, we also have that $\gamma(n)\to 0$, as desired.
\end{proof}

\begin{remark}
In this section, any condition that $\mathcal{Z}$ be norm-Euclidean (i.e., $\rad(K)<1$) could be replaced by the condition that $\rad(K)\le 1$.
\end{remark}

\subsection{Condition (F)}

\begin{lemma} \label{lemma:condition F}
Suppose a lattice $\mathcal{Z}$ is integral, nicely invertible with respect to an inversion $\iota$, norm-Euclidean, unit-generated, and 3-remote. Then condition (F) is satisfied.
\end{lemma}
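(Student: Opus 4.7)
The plan is to reduce condition (F) to two ingredients: a uniform multiplicative bound
\(
\left|\frac{\omega_s(T^n x)}{\omega_s(T^n y)} - 1\right| \le M \cdot d(T^n x, T^n y),
\)
valid for all $x,y\in C_s$ with $M$ independent of $s$, and a uniform comparison $\omega_s(T^n y) \le L' \lambda(C_s)$ that converts this multiplicative bound into the additive one demanded by (F). Combining the two, one obtains
\(
|\omega_s(T^n x) - \omega_s(T^n y)| \le M \cdot \omega_s(T^n y) \cdot d(T^n x, T^n y) \le M L' \lambda(C_s) d(T^n x, T^n y),
\)
which is condition (F) with $R_1 = M L'$.

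For the multiplicative bound, I would start from the product expression $\omega_s(T^n x) = \prod_{i=0}^{n-1} |T^i x|^{2d}$ of Lemma \ref{lemma:inversiondistortion} and take logarithms:
\(
\log\frac{\omega_s(T^n x)}{\omega_s(T^n y)} = 2d \sum_{i=0}^{n-1}\bigl(\log|T^i x| - \log|T^i y|\bigr).
\)
The mean value theorem for $\log$ bounds each summand by $d(T^i x, T^i y)/\min(|T^i x|, |T^i y|)$. The one-step bound $|T^i x|/|T^i y| \le 1 + 2\rad(K)^{2(n-i)}$ from the proof of Lemma \ref{lemma:condition C} (and its symmetric analogue) shows that $\min(|T^i x|, |T^i y|) \ge |T^i y|/C_0$ for some uniform $C_0$, so the summand is comparable to $d(T^i x, T^i y)/|T^i y|$. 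Applying the repeated inversion formula \eqref{eq:repeated inversion formula} yields
\(
\frac{d(T^i x, T^i y)}{|T^i y|} \;=\; |T^i x| \prod_{j=i+1}^{n-1}|T^j x||T^j y| \cdot d(T^n x, T^n y) \;\le\; \rad(K)^{2(n-i)-1}\, d(T^n x, T^n y),
\)
so the sum is geometric in $n-i$ and bounded by a uniform constant times $d(T^n x, T^n y)$. Since this bound on the log-ratio is uniformly bounded (as $d(T^n x, T^n y) \le 2\rad(K)$), exponentiating via $|e^u-1| \le |u|e^{|u|}$ yields the multiplicative estimate.

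For the second ingredient, I would use the change-of-variables identity $\lambda(C_s) = \int_{T^n C_s} \omega_s\, d\lambda$. The finite range property (Lemma \ref{lemma:Proving finite range}) ensures $T^n C_s = U_j$ for some $j$, and R\'enyi's condition gives $\omega_s(z) \ge (1/L)\omega_s(T^n y)$ for all $z\in U_j$, hence $\lambda(C_s) \ge (1/L)\omega_s(T^n y)\lambda(U_j)$. Since only finitely many $U_j$ appear and each contains a full cylinder by Lemma \ref{prop:All Uj contain full cylinders}, the quantity $c := \min_j \lambda(U_j)$ is a positive uniform constant, giving $\omega_s(T^n y) \le (L/c)\lambda(C_s)$.

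The main technical obstacle is controlling the telescoping sum when $|T^i y|$ becomes small, which happens precisely when the following digit $a_{i+1}$ is large; in this regime the denominator $|T^i y|$ has no uniform positive lower bound. The crucial observation is that the repeated inversion formula absorbs the small denominator into the shrinking factor $\rad(K)^{2(n-i)-1}$, so each potentially problematic summand is in fact geometrically small as $i$ moves away from $n$. This is the Iwasawa counterpart of classical estimates phrased in terms of the rational approximates $q_n$, where a small value of $|T^i y|$ corresponds to a large ratio $q_{i+1}/q_i$ between successive denominators.
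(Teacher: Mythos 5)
Your proof is correct, but it takes a genuinely different route from the paper's. You decompose (F) into (i) a \emph{multiplicative} Lipschitz estimate
$\left|\omega_s(T^nx)/\omega_s(T^ny)-1\right| \le M\, d(T^nx,T^ny)$,
proved by taking logarithms of the product formula from Lemma~\ref{lemma:inversiondistortion}, applying the mean value theorem termwise, and absorbing small denominators $|T^iy|$ via the repeated inversion identity \eqref{eq:repeated inversion formula} so the resulting series is geometric in $n-i$; and (ii) the comparison $\omega_s(T^ny) \lesssim \lambda(C_s)$, obtained from change of variables plus R\'enyi's condition and the finiteness of the positive-measure sets $U_j$. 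The paper instead introduces the auxiliary point $r=T_{s'}^{-1}(0)$ (the "rational approximate"), rewrites $\omega_s(T^nx)=\omega_{s'}(0)^{-1}d(r,x)^{2d}$, and applies the algebraic factorization $a^{2d}-b^{2d}=(a-b)(a^{2d-1}+\cdots+b^{2d-1})$ together with the triangle inequality $|d(r,x)-d(r,y)|\le d(x,y)$ and a chain of $\asymp$-comparisons linking $d(r,x)$, $\omega_{s'}(0)$, $\omega_s$, and $\lambda(C_s)$. Your approach makes explicit that (F) is really a Lipschitz sharpening of R\'enyi's condition (C) — indeed your log-sum is the same geometric series that appears in Lemma~\ref{lemma:condition C}, now retaining the factor $d(T^nx,T^ny)$ instead of bounding it by $2\rad(K)$ — while the paper's keeps everything on the additive side and showcases the role of rational approximates. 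One small remark: you invoke Lemma~\ref{prop:All Uj contain full cylinders} to get $\min_j\lambda(U_j)>0$, but this already follows from the $U_j$'s having positive measure by condition (B); the full-cylinder property is not needed here.
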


\begin{proof}
Recall that condition (F) states that for any string $s=a_1a_2\dots a_n$ with $C_s$ nonempty, we have
\(
|\omega_s(T^n x) - \omega_s(T^n y)|\le R_1 \lambda(C_s) d(T^n x,T^n y).
\)
for all $x,y\in C_s$. 

In the proof of Lemma \ref{prop:All Uj contain full cylinders} we mentioned that all $U_j$ are, up to boundary, the intersection of half-spaces $x\cdot z<1/2$ or $x\cdot z<0$ with $z\in \mathcal{Z}$ of norm $1$ and sphere-exteriors $\{x:d(x,z)>1\}$ with $z\in\mathcal{Z}$ of norm $1$ or $\sqrt{2}$. Since $0$ is in or on the boundary of all these spaces, we have that $0\in \overline{U_j}$ for all $j$. We can extend all relevant functions to the boundary of $U_j$ continuously: in particular, $\omega_s(0)$ can be defined. Let $s'= a_1a_2\dots a_{n-1}$ and $r=T^{-1}_{s'} 0$.

Since R{\'e}nyi's condition is satisfied, we have $\omega_s(T^n z)\asymp \omega_s(0)$ for any $z\in C_{s} $.  By the definition of $\omega$ we have:
\[
\lambda(C_{s}) &= \int_{C_{s}} \ d\lambda = \int_{T^{n}C_s} \omega_s (z) \ d\lambda(z)\asymp \omega_s(0) \int_{T^{n}C_s} \ d\lambda(z) \label{eq:lambda to omega}\\& = \omega_s(0) \lambda(T^{n}C_s)\asymp \omega_s(0) \notag
\]
where the last asymptotic holds because $T^nC_{a_1\dots a_n}$ must be one of the $U_j$'s, of which there are finitely many, all of positive measure. As a result of this, we have \[\omega_s(T^n z) \asymp \lambda(C_s) \label{eq:omega to lambda full comparison}\] for any $z\in C_s$. 

By Lemma \ref{lemma:inversiondistortion}, we have that
\[\label{eq:omegadefinitioninconditionF}
\omega_s(T^n x) = \prod_{i=0}^{n-1} |T^i x|^{2d},
\]
where $d$ is the dimension of the ambient space. Since all the terms in the product are strictly less than one, we have \[\omega_{s'}(0)\ge \omega_{s}(0). \label{eq:omegas' to omegas}\]

Using the repeated inversion formula \eqref{eq:repeated inversion formula} with formula  \eqref{eq:omegadefinitioninconditionF} gives
\(
d(r,x)
&=  d(0,T^{n-1}x) \left(\prod_{i=0}^{n-2} |T^i r||T^i x| \right) =\prod_{i=0}^{n-2} |T^i r| \prod_{i=0}^{n-1}|T^i x| \\
&= \omega_{s'}(0)^{1/2d} \omega_s(T^n x)^{1/2d}.
\)
We therefore have
\[\label{eq:omegatodistanceformula}
\omega_s(T^n x) = \omega_{s'}(0)^{-1} d(r,x)^{2d}
\]
for all $x\in C_s$. Also, this gives by \eqref{eq:omega to lambda full comparison}
\[\label{eq:d(r,x)asymp}
d(r,x) \asymp \omega_{s'}(0)^{1/2d} \lambda(C_s)^{1/2d}.
\]

Finally, by using the repeated inversion formula \eqref{eq:repeated inversion formula} with \eqref{eq:omega to lambda full comparison} and \eqref{eq:omegadefinitioninconditionF}, we have
\[
d(x,y) &= \left( \prod_{i=0}^{n-1} |T^i x||T^i y| \right)d(T^n x, T^n y) \notag \\ &= \omega_s(T^n x)^{1/2d} \omega_s(T^n y)^{1/2d} d(T^n x, T^n y) \label{eq:d(x,y) to d(Tnx,Tny)}\\
&\asymp \lambda(C_s)^{1/d} d(T^n x, T^n y). \notag
\]

Combining all that we have obtained so far, we get
\(
&|\omega_s(T^nx)-\omega_s(T^n y)| \\
&\qquad = \omega_{s'}(0)^{-1}|d(r,x)^{2d}-d(r,y)^{2d}| &\text{by }\eqref{eq:omegatodistanceformula}\\
&\qquad=\omega_{s'}(0)^{-1}|d(r,x)-d(r,y)|\times \\ &\qquad \qquad \times|d(r,x)^{2d-1}+d(r,x)^{2d-2}d(r,y)+\dots+d(r,y)^{2d-1}|\\
&\qquad\asymp \omega_{s'}(0)^{-1}\max\{d(r,x)^{2d-1},d(r,y)^{2d-1}\} |d(r,x)-d(r,y)|\\
&\qquad\asymp \omega_{s'}(0)^{-1/2d}\lambda(C_s)^{(2d-1)/2d} |d(r,x)-d(r,y)| &\text{by }\eqref{eq:d(r,x)asymp}\\
&\qquad\le \omega_{s'}(0)^{-1/2d}\lambda(C_s)^{(2d-1)/2d} d(x,y)\\
&\qquad \asymp \omega_{s'}(0)^{-1/2d}\lambda(C_s)^{(2d+1)/2d} d(T^n x,T^n y) &\text{by }\eqref{eq:d(x,y) to d(Tnx,Tny)}\\
&\qquad\le \omega_s(0)^{-1/2d}\lambda(C_s)^{(2d+1)/2d} d(T^n x,T^n y) &\text{by }\eqref{eq:omegas' to omegas}\\
&\qquad\asymp \lambda(C_s) d(T^n x, T^n y) &\text{by }\eqref{eq:lambda to omega},
\)
as desired.
\end{proof}

\subsection{The 7-remote Case}\label{sec:Z3}
So far, we have assumed that our lattices are 3-remote. We now indicate how the proof changes if we instead assume 7-remoteness, which includes the case of $\Zee=\Z^3$ (vacuously as $\mathbb{Z}^3$ contains no points of norm $\sqrt{7}$).

The only proof that needs to be altered in a significant way is that of Lemma \ref{lemma:Proving finite range}. Here, we would proceed as before, but in addition to the type-1, type-2, and type-3 objects, we also require the following:
\begin{itemize}
\item (type-4) The spheres $S_1(z)$ with $z\in \mathcal{Z}$ of norm $\sqrt{3}$,
\item (type-5) The spheres $S_{1/2}(z/2)$ with $z\in \mathcal{Z}$ of norm $1, \sqrt{3}, \sqrt{5}$.
\end{itemize}
We note that in Lemma \ref{lemma:Proving finite range} we could exclude type-4 objects by the 3-remote condition, but now they must be considered.

Before proceeding, let us consider how inversion acts on spheres more carefully. Consider $S_r(z)$ with $|z|>r$. The point on the sphere nearest $0$ is $\frac{|z|-r}{|z|}z$ and the point on the sphere farthest from $0$ is $\frac{|z|+r}{|z|}z$. Therefore the point on  $\iota S_r(z)$  nearest $0$ at $\frac{|z|}{|z|+r}\iota(z)= \frac{1}{|z|+r} \frac{\iota(z)}{|\iota(z)|}$ and the point on $\iota S_r(z)$ farthest from $0$ is at $\frac{|z|}{|z|-r}\iota(z)= \frac{1}{|z|-r} \frac{\iota(z)}{|\iota(z)|}$. From this we can quickly calculate that the new sphere is
\(
\iota S_r(z) = S_{\frac{r}{|z|^2-r^2}}\left( \frac{1}{|z|^2-r^2} |z|^2 \iota (z) \right),
\)
where the center of this new sphere has norm $|z|/(|z|^2-r^2)$. Note that by our assumption of nice invertibility, if $z\in \mathcal{Z}$, then $|z|^2 \iota(z)\in \mathcal{Z}$.

Thus, if $S_1(z)$ is a type-4 object, then $\iota(S_1(z))$ is $S_{1/2}(z'/2)$ for some $z'\in \mathcal{Z}$ with $|z'|=\sqrt{3}$. As before, we can translate this new sphere to see where it may intersect $K$. The resulting center will be at $z'/2+w$ for some $w\in \mathcal{Z}$. Knowing that $|z'|^2=3$, we have
\(
\left| \frac{z'}{2}+w\right|^2 = \frac{3}{4}+z'\cdot w+w\cdot w.
\)
Since $2z'\cdot w \in \mathbb{Z}$, we have $|z'/2+w|^2$ must be an odd integer over $4$. Let us consider the options. Suppose $|z'/2+w|^2\ge 9/4$. Then, the sphere $S_{1/2}(z'/2+w)$ cannot intersect the open unit ball, and therefore does not intersect $K$, by the norm-Euclidean condition.  Suppose next that $|z'/2+w|^2=7/4$ and that $S_{1/2}(z'/2+w)$ intersects $K$ in a nontrivial way. We then have a point $z'+2w\in \Zee$ of norm $\sqrt{7}$, such that $d(z'/2+w, K)<1/2$. Our assumption of 7-remoteness rules out this option. Therefore, $\iota S_1(z)+w=S_{1/2}(z'/2+w)$ is a type-5 object.

So consider now type-5 objects. 

First, if $|z|=\sqrt{3}$, then $\iota S_{1/2}(z/2)$ will be $S_1(z')$ for $z'\in\mathcal{Z}$. Its translates that intersect $K\subset B_1(0)$ are therefore necessarily type-3 or type-4 objects. 

If $|z|=\sqrt{5}$, then $\iota S_{1/2}(z/2)$ will be $S_{1/2}(z'/2)$ for some $z'\in\mathcal{Z}$. We showed above that translates of such spheres that intersect $K$ are all type-5 objects.

Finally, we have the spheres $S_{1/2}(z/2)$ where $|z|=1$. This sphere contains the origin, and its farthest point from the origin is at $z$. So therefore $\iota S_{1/2}(z/2)$ will be the hyperplane $x\cdot \iota(z) =1$. The translates of these hyperplanes that intersect $K$ will be type-1 or type-2 objects.

This completes the proof of the altered version of Lemma \ref{lemma:Proving finite range}.

Likewise the proof of Lemma \ref{prop:All Uj contain full cylinders} is functionally unchanged.

\section{Proving the Measure is Real-analytic}\label{sec:real analytic}

In this section, we extend an argument of Hensley \cite{HensleyBook}, in turn based on Bandtlow--Jenkinson \cite{bandtlow2007invariant} and Mayer \cite{MR757051} to study the invariant measure for $T$. Adjusting some techniques and filling in some details, we prove:


\begin{thm}\label{thm:real-analytic}
Under the hypotheses of Theorem \ref{thm:main}, the invariant measure for $T$ has a density $d\mu/d\lambda$ that is analytic on each component of $K\setminus \cupover_n T^n\partial K$.
\begin{proof}
We follow Hensley's method, relegating calculations to later lemmas.


First, recall the notation for the serendipitous decomposition of $K$. The proof of Lemma \ref{lemma:Proving finite range} and its analogue in Section \ref{sec:Z3} show that the forward orbit of the boundary $E=\cupover_{n=0}^\infty T^n \partial K$ can be written as a finite union of hyperplanar and spherical objects, cutting $K$ into a finite number of open connected components $B_i$ with indexing set $\mathcal{I}$, so that $K=E\cup \bigcup_{i\in\mathcal{I}} B_i$. For each $i,j$, let $G_{i,j}=\{a\in \Zee: T_a^{-1}(B_j)\cap B_i\neq \emptyset\}$. Note that for a fixed $j$, the sets $G_{i,j}$ are pairwise disjoint. From the fact that $E$ already contains all images of $\partial K$, it follows immediately (Lemma \ref{lem:Gij containment}) that for each $a\in G_{i,j}$, we in fact have $T_a^{-1}(B_j)\subset B_i$.

We next rephrase the theorem as an eigenvalue problem. Namely, consider the space $\mathcal B_\R$ of bounded, locally-analytic functions on $K\setminus E$. Viewing functions in $\mathcal B_\R$ as densities for finite measures, we can define a transfer operator $L_\R$ which maps a density $f$ to the push-forward density \(
L_\R f  (x)=\sum_{i\in\mathcal{I}} \sum_{a\in G_{i,j}} f(T_a^{-1}(x)) w_a(x), \qquad \text{if }x\in B_j
\) where $w_a(x)=\norm{a+x}^{-2d}$ is the Jacobian of $T_a^{-1}$ at $x$.

We will want to prove two things: that there is a subspace $\mathcal B'_\R\subset \mathcal B_\R$ such that $L_\R$ defines a well-defined operator $L_\R:\mathcal B'_\R\rightarrow \mathcal B'_\R$, and that for an appropriate norm on $\mathcal B'_\R$ the transfer operator is compact. We will then apply the theory of positive operators to find a 1-eigenfunction for $\mathcal B'_\R$.

We will need to construct several intermediate spaces, which will inter-relate as follows:
$$\prod_{i\in \mathcal I} H^\infty D_i \supset \prod_{i\in \mathcal I} \mathcal B_i \stackrel{I}{\longrightarrow} I\left(\prod_{i\in \mathcal I} \mathcal B_i\right)=\mathcal B'_\R \subset \mathcal B_\R.$$

We now complexify all objects involved, starting with extending $\R^d$ to $\C^d$. Recall that the inversion $\iota$ is given by a composition of an orthogonal linear transformation $\mathcal O$ with the mapping $x\mapsto x/\norm{x}^2$. For $z=(z_1, \ldots, z_d)\in \C^d$, take $Q(z)=\sum z_i^2$, extend $\iota$ to complex coordinates as $\iota(z)=\mathcal O(z)/Q(z)$, and define $T^{-1}_a(z) = \iota(a+z)$ (note that we do not extend $T$ to $\C^d$). We also extend the Jacobian to be $w_a(z) = Q(z)^{-d}$. This is no longer the Jacobian of $T^{-1}_a(z)$, except on the real part $\R^d$.

\begin{figure}
    \centering
    \includegraphics[width=.65\textwidth]{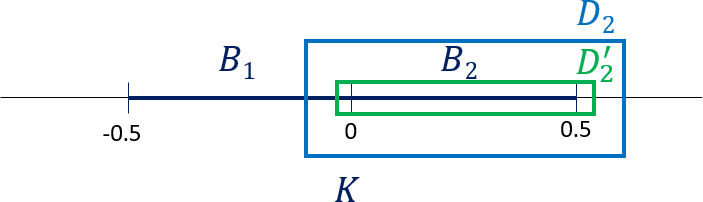}
    \caption{Thickening the serendipitous decomposition of the nearest-integer CF system with inversion $x\mapsto 1/x$. Here, $E=\{-0.5,0,0.5\}$ breaks $K=(-0.5,0.5]$ into two regions $B_1$ and $B_2$. When $B_2$ is thickened in $\C$ to create $D_2$, its image under $T_a^{-1}$, for $a\in G_{2,2}=\{a\geq 2\}$, is contained in a smaller region $D'_j$.}
    \label{fig:thickening}
\end{figure}

Let $U\subset \C^d$ be the unit ball, and $\epsilon<1$ to be determined later. 
For $a\in \Zee\setminus\{0\}$ and sufficiently small $\epsilon$, we have that $T_a^{-1}$ is defined and contractive on $\R^d +\epsilon U$ away from the origin (Lemma \ref{lem:contraction principle}), sending each set $D_i=B_i+\epsilon U$ into a set $D'_j = B_j + \delta U \subset D_j$ with $\delta<\epsilon$ (with $\delta$ not depending on $a$).

Thickening our function spaces, for each index $i\in \mathcal I$ let $H^\infty D_i$ be the space of bounded holomorphic functions on $D_i$ with the sup norm, and define $H^\infty D'_i$ likewise. Define the complexified transfer operator $L_{i,j}: H^\infty D_i \rightarrow H^\infty D_j$ as
\(
L_{i,j}f(z)= \sum_{a\in G_{i,j}} f(T_a^{-1}(z)) w_a(z).
\)

We next prove that $H^\infty D_i \stackrel{L_{i,j}}{\rightarrow} H^\infty D_j$ is well-defined and is a compact operator. Indeed, by definition of $D'_i$, the operator $H^\infty D_i \stackrel{L_{i,j}}{\rightarrow} H^\infty D_j$ only makes use of the values of $f\vert_{D'_i}$, so it factors as 
$$H^\infty D_i \stackrel{J}{\rightarrow} H^\infty D'_i \stackrel{L_{i,j}\vert_{H^\infty D'_i}}{\longrightarrow}  H^\infty D_j,$$ as the composition of the canonical embedding $J:H^\infty D_i \rightarrow H^\infty D'_i$ that views holomorphic functions on $D_i$ as holomorphic functions on $D'_i$ (compact since, by Montel's Theorem, bounded sequences of holomorphic functions sub-converge \emph{on compacts} and $D'_i$ has compact closure in $D_i$) and the restriction $L_{i,j}\vert_{H^\infty D'_i}$.  We show (Lemma \ref{lem:Sij bound}) that $L_{i,j}(1)$ is a bounded function, and then use the Vitali convergence theorem to prove (Lemma \ref{lem:Lij bounded}) that  $L_{i,j}\vert_{H^\infty D'_i}$ is bounded. 

Next, let $\mathcal B_i=\{f\in H^\infty D_i: f(B_i)\subset \R\}\subset H^\infty D_i$, a closed subspace. Observing that $L_{i,j}(\mathcal B_i) \subset \mathcal B_j$, we restrict $L_{i,j}: H^\infty D_i\rightarrow H^\infty D_j$ to $L_{i,j}: \mathcal B_i \rightarrow \mathcal B_j$, which remains a compact operator.

Combining this construction for all indices $i$, we take $\mathcal{B}_\C=\prod_{i\in\mathcal{I}} \mathcal B_i$, and define the transfer operator $L_\C$ as a matrix of transfer operators $L_{i,j}$. 
Namely, if $f=(f_i)_{i\in\mathcal{I}}\in \mathcal B_i$, then 
\(
L_\C f = \left( \sum_{i\in\mathcal{I}} L_{i,j}f_i\right)_{j\in\mathcal{I}}.
\)

Returning to real coordinates, define a mapping $I$ from $\mathcal B_\C$ into $\mathcal B_\R$ by restricting each multifunction $(f_i)\in \mathcal B_\C$ to $B_i$, jointly giving a piecewise-defined locally-analytic function on $K\setminus E$. Let $\mathcal B'_\R$ be the image of this operator. Observe that $I$ is linear and injective: if $f\in \ker I$ is given by $(f_i)$, then each $f_i$ must be identically zero on the open set $B_i$, so that all derivatives must be zero along $B_i$, and the power series expansion must be identically zero on a neighborhood of $B_i$, which would then imply by the identity theorem that $f_i$ is identically zero on $D_i$. Thus, $I: \mathcal B_\C \rightarrow \mathcal B'_\R$ is an isomorphism of vector spaces. Give $\mathcal B'_\R$ the induced norm: namely, for $f\in \mathcal B'_\R$, $\Norm{f}=\Norm{I^{-1}(f)}_\infty$ is the sup norm of the piecewise-extension of $f$ to the complexified regions $D_i$. 
Finally, observe that $I$ provides a conjugacy between $L_\R$ and $L_\C$, so that $L_\R: \mathcal B'_\R \rightarrow \mathcal B'_\R$ is a compact operator.

Next, we apply the theory of positive operators to obtain a unique eigenvalue for $L_\R$, and exponential convergence to this eigenvalue for all positive densities. 
To this end, let $P\subset \mathcal \mathcal B'_\R$ be the subset of non-negative functions.

The following are clear:  (1) $P$ is closed under addition and scaling by positive numbers, (2) the interior of $P$ is non-empty (in particular small perturbations of the function $f(x)=1$ remain in $P$), (3) $P$ is closed, (4) any element of $\mathcal B'_\R$ is a difference of two elements of $P$ (note that  $f$  is bounded above), (5) $L_\R$ maps $P$ into $P$. 

To apply the theory, it remains to show (6) that for any non-zero $f\in P$, there is a $n$ such that $L^n_\R(f)\asymp 1$. Since $f$ is a continuous function on $K\setminus E$, it must be positive on some open set, which in turn contains a full cylinder for $T$ at some depth $n$ by Lemma \ref{lem:full cylinders are dense}. Thus, $n$ iterations of the transfer operator extend this region of positivity to all of $K$, while also adding some other non-negative values, so that we have a uniform bound  $0<\inf_{x\in K} L^n_\R(f)(x)$. Since $L^n_\R f\in \mathcal B'_\R$, it is the restriction of a bounded multi-function in $\mathcal B_\C$, and therefore is bounded uniformly above by $\Norm{f}_{B'_\R}$, the supremum of the extended multi-function.

The above conditions then imply, by  Theorem 2.5 of \cite{MR0181881}, that $L_\R$ has a positive eigenvalue $\rho$ with eigenfunction $f\in P$. Furthermore, since $L_\R$ is a transfer operator and preserves the $L^1$ norm, we have $\Norm{f}_1=\Norm{L_\R f}_1 = \Norm{\rho f }_1 =\rho \Norm{f}_1$ and so $\rho=1$. By definition of the space $\mathcal B'_\R$, $f$ is locally-analytic on $K\setminus E$ (and furthermore can be extended complex-analytically from each $B_i$ to $D_i$). 

Treating the eigendirection for the transfer operator as a probability density function, we obtain a $T$-invariant measure on $K$. Since Theorem \ref{thm:main} already provided a \emph{unique} invariant measure $\mu$ equivalent to Lebesgue measure, we conclude that $\mu$ is, in fact, given by our density, which is by construction locally-analytic on $K\setminus E$.

Lastly, we remark that parts of Theorem \ref{thm:main}, including exactness, can also be obtained by using a denseness argument, as described in \cite{bandtlow2007invariant}.
\end{proof}
\end{thm}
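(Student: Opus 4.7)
The plan is to realize $d\mu/d\lambda$ as the positive fixed-point eigenfunction of a complexified transfer operator acting on a space of piecewise-holomorphic functions, and then invoke the uniqueness of the absolutely continuous invariant measure (already provided by Theorem \ref{thm:black box 1}) to identify this eigenfunction with the invariant density. Since Lemma \ref{lemma:Proving finite range} yields a finite serendipitous decomposition $K = E \cup \bigcup_{i\in\mathcal I} B_i$ whose boundary set $E$ is forward-$T$-invariant, a natural piecewise Banach space is available: on each $B_i$ separately we look for real functions that extend holomorphically to a small $\mathbb C^d$-neighborhood of $\overline{B_i}$.

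First I would thicken each $B_i$ to $D_i = B_i + \epsilon U$, where $U\subset\mathbb C^d$ is the unit ball, and extend $\iota$ complex-analytically via $\iota(z) = \mathcal O(z)/Q(z)$ with $Q(z)=\sum_k z_k^2$. For $\epsilon$ small enough, each inverse branch $T_a^{-1}(z)=\iota(a+z)$ is holomorphic and contractive on $D_j$; the key observation, which follows directly from serendipity, is that whenever $T_a^{-1}(B_j)\cap B_i \neq \emptyset$ one in fact has $T_a^{-1}(B_j)\subset B_i$ (since the boundary of a cylinder image lies in $E$ and cannot cross any piece of $E$ lying in $\overline{B_i}$). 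Quantifying this in the complexified picture, $T_a^{-1}(D_j)\subset D_i':=B_i + \delta U$ with $\delta<\epsilon$ uniformly in the admissible digit $a$. Set $\mathcal B_i = \{f\in H^\infty D_i: f(B_i)\subset \mathbb R\}$, assemble $\mathcal B_{\mathbb C} = \prod_{i\in\mathcal I} \mathcal B_i$ with the sup norm, and define a block transfer operator by
\(
(L_{\mathbb C} f)_j(z) = \sum_{i\in\mathcal I} \sum_{a\in G_{i,j}} f_i(T_a^{-1}(z))\, Q(a+z)^{-d}.
\)

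The crux is to prove that $L_{\mathbb C}$ is compact. Each block $L_{i,j}:H^\infty D_i\to H^\infty D_j$ factors as the canonical inclusion $H^\infty D_i \hookrightarrow H^\infty D_i'$ (compact by Montel's theorem, since $\overline{D_i'}$ is a compact subset of $D_i$) followed by a bounded operator $H^\infty D_i' \to H^\infty D_j$. Boundedness reduces to the uniform convergence of $\sum_{a\in G_{i,j}} |Q(a+z)|^{-d}$ on $D_j$, which follows from the real-variable bound $L_{\mathbb R}(1)=O(1)$ (available via Condition (C) of Lemma \ref{lemma:condition C}) together with a Vitali-type argument transferring the real estimate to the complex thickenings. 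Applying the theory of compact positive operators (Theorem 2.5 of \cite{MR0181881}) to the positive cone of non-negative multifunctions --- which is closed, solid, generating, $L_{\mathbb C}$-invariant, and positively-improving due to the denseness of full cylinders at large depth via Condition (E) --- yields a positive eigenfunction $f$ with positive eigenvalue $\rho$. Preservation of the $L^1$ norm forces $\rho=1$, so the real restriction of $f$ is a $T$-invariant density locally analytic on each $B_i$, and by uniqueness from Theorem \ref{thm:main}(2) it must equal $d\mu/d\lambda$.

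The main obstacle I anticipate is the complexification step itself: producing a single $\epsilon>0$ such that \emph{every} inverse branch $T_a^{-1}$ (including those attached to small-norm digits $a$, whose denominators $|a+x|$ can be close to $1$ near $\partial K$) extends holomorphically and strictly contractively from $D_j$ into $D_i'$, with a uniform positive gap $\epsilon-\delta$. For digits of large norm the contraction is automatic from the smallness of the Jacobian, but for small digits one must use serendipity quantitatively --- precisely, that $T_a^{-1}(B_j)$ sits at positive distance from those hyperplanar and spherical pieces of $E$ that bound $B_i$ --- to ensure that $Q(a+z)$ stays bounded away from zero on $D_j$ and that $\delta<\epsilon$ can be chosen uniformly over all triples $(i,j,a)$ with $a\in G_{i,j}$.
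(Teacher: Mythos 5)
Your proposal follows essentially the same route as the paper's proof: complexify the inversion via $Q(z)=\sum z_k^2$, thicken the serendipitous pieces $B_i$ to polydomains $D_i$, obtain compactness of the block transfer operator from Montel's theorem composed with a bounded operator, verify a uniform contraction estimate (split into small- and large-digit cases), and invoke the Krein--Rutman-type Theorem 2.5 of \cite{MR0181881} plus uniqueness of the a.c.i.m.\ to conclude. One small imprecision: the boundedness of $\sum_{a\in G_{i,j}}|Q(a+z)|^{-d}$ on $D_j$ does not come from R\'enyi's Condition (C) (which only compares $\sup\omega_s/\inf\omega_s$ within a single cylinder); the paper instead proves it directly via a lattice-point counting estimate together with the perturbation bound of Lemma \ref{lemma:complexisreal}, and then uses Vitali only to show that the limit $L_{i,j}f$ is holomorphic.
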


\subsection{Technical Lemmas}

Here we prove the lemmas used to prove Theorem \ref{thm:real-analytic}.

\begin{lemma}\label{lem:Gij containment}
For any $i,j\in \mathcal{I}$ and any $a\in \Zee$, we have that either $T_a^{-1}(B_j)\subset B_i$ or $T_a^{-1}(B_j)\subset B_i^c$. 
\end{lemma}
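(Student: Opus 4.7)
The plan is to prove the dichotomy by showing that $T_a^{-1}(B_j)$ is a connected subset of $K\setminus E$, so that it must lie entirely in a single component $B_{i^*}$ of the serendipitous decomposition. The two alternatives in the statement then correspond to whether $i=i^*$ or $i\ne i^*$.

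First I would verify the forward invariance of $E$ under $T$. From the definition $E=\bigcup_{n=0}^{\infty} T^n\partial K$, one has $T(E)=\bigcup_{n=1}^{\infty} T^n\partial K \subset E$ (this is essentially just the index shift, and is already implicit in the statement of serendipity). Using this, if $y\in T_a^{-1}(B_j)$ were also in $E$, then $T_a(y)=T(y)\in T(E)\subset E$, contradicting $T_a(y)\in B_j\subset K\setminus E$. Hence $T_a^{-1}(B_j)\subset K\setminus E=\bigsqcup_{i\in\mathcal I} B_i$.

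Next I would observe that the inverse branch $T_a^{-1}(x)=\iota(a+x)$ is continuous on $K\setminus\{-a\}$. For $a\ne 0$, the point $-a$ is a nonzero lattice point, which is either outside $K$ or lies on $\partial K\subset E$; either way it does not meet the open set $B_j\subset K^{\circ}\setminus E$. So $T_a^{-1}$ is continuous on $B_j$, and $T_a^{-1}(B_j)$, being the continuous image of a connected set, is itself connected. (The case $a=0$ either yields an empty cylinder or an image contained in the unit sphere, of measure zero, and is handled trivially.)

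Combining the two steps, $T_a^{-1}(B_j)$ is a connected subset of the disjoint union of open sets $\bigsqcup_{i} B_i$, and therefore is contained in a single piece $B_{i^*}$ (or is empty, in which case the lemma is trivial). For $i=i^*$ this gives $T_a^{-1}(B_j)\subset B_i$, and for any other $i$ we have $T_a^{-1}(B_j)\subset B_{i^*}\subset B_i^c$. The only mild obstacle is the bookkeeping around the singularity of $T_a^{-1}$ at $-a$, but this causes no trouble because the $B_j$ are open subsets of $K\setminus E$ containing no lattice points. The real content of the proof is the forward invariance of $E$, which is essentially the defining feature of the serendipitous decomposition.
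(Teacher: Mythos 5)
Your proof is correct and follows essentially the same route as the paper's: both arguments establish that $T_a^{-1}(B_j)$ avoids $E$ via forward-invariance of $E$ under $T$, then use continuity of $T_a^{-1}$ and connectedness of $B_j$ to conclude the image lands entirely inside a single component $B_{i^*}$. You are slightly more careful about the singularity of $T_a^{-1}$ at $-a$ and the degenerate case $a=0$, which the paper leaves implicit, but the essential content is identical.
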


\begin{proof}
Observe first that, by construction, $T_a^{-1}$ maps $B_j$ to the complement of $E$: otherwise, we would have a point of $E$ whose image under $T$ is in $B_j\subset K\setminus E$, but $T$ maps points of $E$ to $E$, a contradiction. Since $T_a^{-1}$ is continuous and $B_i, B_j$ are both connected components of $E$, this implies that $T_a^{-1}(B_j)\cap B_i\neq \emptyset$ implies $T_a^{-1}(B_j)\subset B_i$, as desired.\qedhere


\end{proof}

\begin{lemma}
\label{lemma:complexisreal}
Let $b\in \R^d$ satisfying $\norm{b}>1$. Let $0<\epsilon<1$ and $c\in \C^d$ satisfying $\norm{c}<\epsilon$. For sufficiently small values of  $\epsilon/\norm{b}$ one has
\[\label{eq:nearness bound}
\frac{1}{\sum_{i=1}^d (b_i+c_i)^2} = 
\frac{1}{\sum_{i=1}^d b_i^2} \left(1+O\left( \frac{\epsilon}{|b|}\right)\right)
\]
with uniform implicit constant.
\end{lemma}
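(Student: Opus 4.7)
The plan is to expand the sum in the denominator, factor out $|b|^2$, and bound the resulting correction term by a uniform multiple of $\epsilon/|b|$. Then the reciprocal expansion via geometric series gives the claim.

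First, I would expand $(b_i+c_i)^2 = b_i^2 + 2b_ic_i + c_i^2$ and sum over $i$ to write
\(
\sum_{i=1}^d (b_i+c_i)^2 = |b|^2 + 2\, b\cdot c + Q(c),
\)
where $b\cdot c = \sum b_i c_i$ and $Q(c)=\sum c_i^2$ are the complexified bilinear/quadratic forms. Factoring out $|b|^2$,
\(
\sum_{i=1}^d (b_i+c_i)^2 = |b|^2 \left( 1 + \frac{2\,b\cdot c + Q(c)}{|b|^2}\right).
\)

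Next I would bound the error term. By Cauchy--Schwarz applied to the complexified inner product, $|b\cdot c|\le |b|\cdot |c| < |b|\epsilon$, and $|Q(c)| \le \sum |c_i|^2 = |c|^2 < \epsilon^2$. Since $|b|>1$ and $\epsilon<1$, we have $\epsilon^2/|b|^2 \le \epsilon/|b|$, so
\(
\left|\frac{2\,b\cdot c + Q(c)}{|b|^2}\right| \le \frac{2\epsilon}{|b|} + \frac{\epsilon^2}{|b|^2} \le \frac{3\epsilon}{|b|},
\)
with uniform constant independent of $b,c,d$. Write the error term as $\eta$, so $|\eta|\le 3\epsilon/|b|$.

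Finally, for $\epsilon/|b|$ sufficiently small (say $3\epsilon/|b| < 1/2$), the quantity $1+\eta$ lies in the disk of radius $1/2$ around $1$, so $|1+\eta|\ge 1/2$ and we may expand
\(
\frac{1}{1+\eta} = 1 + \sum_{k\ge 1}(-\eta)^k = 1 + O(|\eta|) = 1 + O\!\left(\frac{\epsilon}{|b|}\right),
\)
with uniform implicit constant. Dividing through by $|b|^2$ yields the stated identity. The only genuine point to watch is that $c$ is complex, so $\sum (b_i+c_i)^2$ is a complex number rather than a sum of squares; the key observation is that the smallness assumption on $\epsilon/|b|$ keeps this complex quantity uniformly bounded away from $0$, which is what legitimizes the geometric expansion. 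No other step presents a serious obstacle.
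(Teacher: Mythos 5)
Your proof is correct and takes essentially the same approach as the paper: expand the quadratic, bound the cross term by Cauchy--Schwarz and the quadratic term trivially to obtain a $1+O(\epsilon/|b|)$ multiplicative error, then invert using the uniform bound on the error term. Your argument is slightly more explicit about the geometric-series justification for the reciprocal step, but there is no substantive difference.
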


\begin{proof}
We have the following, applying the Cauchy-Schwartz inequality in the second step,  
\(
\frac{\sum_{i=1}^d (b_i +c_i)^2}{\sum_{i=1}^d b_i^2} &= 1 + \frac{2\sum_{i=1}^d b_i c_i}{|b|^2} + \frac{\sum_{i=1}^d c_i^2}{|b|^2}= 1+O\left( \frac{2|b||c|}{|b|^2} \right) +O\left( \frac{|c|^2}{|b|^2} \right)\\
&= 1+ O\left( \frac{2\epsilon }{|b|} \right) +O\left( \frac{\epsilon^2}{|b|^2} \right)= 1+O\left( \frac{\epsilon}{|b|}\right).
\)
Thus
\(
\frac{1}{\sum_{i=1}^d (b_i+c_i)^2} = 
\frac{1}{\left(\sum_{i=1}^d b_i^2\right)\left(1+O\left( \frac{\epsilon}{|b|}\right)\right)}. 
\)
The result then follows by noting that $(1+f(x))^{-1}=1+O(f(x))$ with uniform implicit constant when $|f(x)|$ is both less than and bounded away from $1$.
\end{proof}

The following lemma generalizes Lemma 5.2 of \cite{HensleyBook}, with an alternate proof.
\begin{lemma}\label{lem:contraction principle}
For sufficiently small $\epsilon$, there exists $0<\delta<\epsilon$ such that for all $a\in G_{i,j}$, we have $T_a^{-1}(B_j+\epsilon U) \subset B_i+ \delta U$, where $U$ is the unit ball in $\C^d$.
\end{lemma}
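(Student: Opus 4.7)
The plan is to reduce the claim to a direct contractive estimate on $\iota$. For any $z \in B_j + \epsilon U$, write $z = y + c$ with $y \in B_j$ and $|c| < \epsilon$, and set $b := a+y \in \R^d$. Then $T_a^{-1}(z) - T_a^{-1}(y) = \iota(b+c) - \iota(b)$, and since Lemma \ref{lem:Gij containment} already guarantees $T_a^{-1}(y) \in B_i$, it suffices to prove $|\iota(b+c) - \iota(b)| < \delta$ for some $\delta < \epsilon$ uniform in $a$ and $y$.

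The crux is a uniform lower bound $|b| \geq 1/R$, where $R := \rad(K) < 1$: since $a \in G_{i,j}$ forces $\iota(b) = T_a^{-1}(y) \in B_i \subset K$, we have $|\iota(b)| = 1/|b| \leq R$. To bound the increment, I would compute the derivative directly. A short calculation gives
\[
D\iota_b(v) \;=\; \frac{\mathcal{O}(R_b(v))}{|b|^2}, \qquad R_b(v) \;=\; v - \frac{2(b\cdot v)}{|b|^2}\,b,
\]
where $R_b$ is orthogonal reflection across $b^\perp$. Both $\mathcal{O}$ and $R_b$ extend to $\C^d$ by acting on real and imaginary parts separately, and each remains a Euclidean isometry, so $|D\iota_b(v)| = |v|/|b|^2$ \emph{exactly} on $\C^d$. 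Combined with the quadratic-remainder estimate from Taylor's theorem (equivalently, a second application of Lemma \ref{lemma:complexisreal}) this yields
\[
|\iota(b+c) - \iota(b)| \;\leq\; \frac{|c|}{|b|^2} + O\!\left(\frac{|c|^2}{|b|^3}\right) \;\leq\; \epsilon R^2 + O(\epsilon^2 R^3).
\]

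For $\epsilon$ small enough the right-hand side is at most $\epsilon(1+R^2)/2$, so setting $\delta := \epsilon(1+R^2)/2 < \epsilon$ yields $T_a^{-1}(z) \in B_i + \delta U$ for every $a \in G_{i,j}$. Well-definedness of $T_a^{-1}$ on the thickened region follows from $|b+c| \geq 1/R - \epsilon > 0$ and non-vanishing of $Q(b+c)$ for $\epsilon/|b|$ small (again from Lemma \ref{lemma:complexisreal}). The main obstacle is uniformity in $a$: a priori, unit-norm or small-norm digits could allow $|a+y|$ to approach zero and destroy contraction. The saving grace is that membership in $G_{i,j}$---not merely in the lattice---forces $\iota(a+y) \in K$, which by the norm-Euclidean hypothesis bounds $|a+y|$ from below by $1/R > 1$. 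It is equally essential that the operator norm of $D\iota_b$ on $\C^d$ is exactly $|b|^{-2}$ rather than merely a constant multiple thereof; this is what allows the inequality $\delta < \epsilon$ to follow cleanly from $R^2 < 1$.
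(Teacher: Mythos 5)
Your proof is correct, and it takes a genuinely different (and arguably cleaner) route than the paper's. The paper's argument splits into two regimes: for small digits it invokes a soft compactness-and-continuity argument (the singular values of $D\iota$ are $<1$ on $\R^d$ outside the unit ball, hence on complex neighborhoods of a compact set of base points), and for large digits it does a direct computation via Lemma~\ref{lemma:complexisreal}; the split is needed because the implicit constant in that lemma is not sharp enough to close the argument for moderate $|a+x_0|$. You avoid the dichotomy by computing the \emph{exact} operator norm $|b|^{-2}$ of the complexified differential $D\iota_b$ at a real base point $b$ (a real orthogonal matrix composed with a real Householder reflection, each of which remains an isometry of $\C^d\cong\R^{2d}$), then invoking the uniform lower bound $|b|=|a+y|\ge 1/\rad(K)$ that follows from $T_a^{-1}(y)\in B_i\subset K$. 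Because the leading-order displacement is exactly $|c|/|b|^2\le \epsilon R^2$ with the sharp constant $1$, and $R^2<1$, the Taylor remainder $O(\epsilon^2 R^3)$ is dominated uniformly in $a$, and a single choice of $\delta$ works for all digits at once. What the paper's approach buys is that it sidesteps an explicit second-derivative estimate; what yours buys is a uniform proof with an explicit $\delta$ and no case split. One small imprecision: the parenthetical ``equivalently, a second application of Lemma \ref{lemma:complexisreal}'' isn't quite right, since that lemma only gives a first-order multiplicative expansion of $1/Q$ and does not by itself yield the quadratic Taylor remainder; however the direct Taylor estimate (using that the second derivative of the holomorphic $\iota$ at points $b+tc$, $t\in[0,1]$, is $O(|b|^{-3})$ uniformly, since $|Q(b+tc)|\gtrsim |b|^2$ there) is sound, so this is a presentational slip rather than a gap.
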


\begin{proof}
Let $x_0\in B_j$, $a\in G_{i,j}$, and $z\in \epsilon U$. Write $x=a+x_0$, so that $T_a^{-1}(x_0+z) = \iota (x+z)$ and $\norm{x}>(\rad K)^{-1}>1$. The inversion $\iota$ is the composition of the mapping $x\mapsto x/\norm{x}^2$ with some orthogonal mapping $\mathcal O$. 

If $U$ were a subset of $\R^d$, the result would be immediate from properness of $K$ and the inversion identity \eqref{eq:inversion formula}, as long as $\epsilon<(\rad(K)^{-1}-1)$. We reduce to this case in two ways, for large and small choices of $a$, respectively.

For small digits, observe that \eqref{eq:inversion formula} implies that the singular values of the differential $D\iota$ (viewed either as a real or complex mapping along $\R^d$) are strictly smaller than $1$ at points that are strictly outside of the unit ball. By continuity of the differential, the complex-analytic extension of $\iota$ remains a contraction on a neighborhood of any point, and thus of any compact region that $x=a+x_0$ may lie in. This gives a uniform estimate for any finite set of digits $a$, but we don't have control over the \emph{full} collection $G_{i,j}$.

For sufficiently large digits $a$, we may use Lemma \ref{lemma:complexisreal} to adjust the denominator in the inversion:
\begin{align*}
    \frac{\mathcal{O}(x+z)}{Q(x+z)} &= \frac{\mathcal{O}(x+z)}{\norm{x}^2}\left(1+O\left(\frac{\epsilon}{\norm{x}}\right)\right)\\ &= \frac{\mathcal{O}(x)}{\norm{x}^2}
       + \frac{\mathcal{O}(x)}{\norm{x}^2}\cdot  O\left(\frac{\epsilon}{\norm{x}}\right)+\frac{\mathcal{O}(z)}{\norm{x}^2}\left(1+O\left(\frac{\epsilon}{\norm{x}} \right)\right).
\end{align*}

The first term is in $B_i$, as desired. It remains to bound the remaining terms uniformly for sufficiently large $x$, corresponding to large $a$:
\begin{align*}
    &\norm{\frac{\mathcal{O}(x)}{\norm{x}^2}\cdot  O\left(\frac{\epsilon}{\norm{x}} \right)+\frac{\mathcal{O}(z)}{\norm{x}^2}\left(1+O\left(\frac{\epsilon}{\norm{x}}\right)\right)}\\ &\qquad \leq \norm{\frac{\mathcal{O}(x)}{\norm{x}^2}\cdot  O\left(\frac{\epsilon }{\norm{x}}\right)}+\norm{\frac{\mathcal{O}(z)}{\norm{x}^2}\left(1+O\left(\frac{\epsilon}{\norm{x}}\right)\right)}
    \\&\qquad\leq \norm{O\left(\frac{\epsilon}{\norm{x}^2}\right)}+\norm{\frac{\epsilon}{\norm{x}^2}\left(1+O\left(\frac{\epsilon}{\norm{x}}\right)\right)}= O\left(\frac{\epsilon}{\norm{x}^2}\right)<\delta<\epsilon,\end{align*}
    for some $\delta$ not depending on $x$, for sufficiently large $x$.
\end{proof}

\begin{lemma}\label{lem:Sij bound}
For any $i,j\in \mathcal{I}$, we have that 
\[
S_{i,j}:=\sup_{z\in D_j} \sum_{a\in G_{i,j}} \left|w_a(z)\right|<\infty.
\]
\end{lemma}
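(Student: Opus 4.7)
The strategy is to bound $|w_a(z)|$ uniformly in $z\in D_j$ by $C|a|^{-2d}$, and then invoke convergence of the lattice zeta sum $\sum_{a\in\Zee\setminus\{0\}}|a|^{-2d}$, which converges because $2d>d$.

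First I would extract a uniform lower bound on $|a+z_0|$ for each $a\in G_{i,j}$ and each real $z_0\in B_j$. By Lemma~\ref{lem:Gij containment}, $T_a^{-1}(B_j)\subset B_i\subset K$, so $\iota(a+z_0)\in K$ for every $z_0\in B_j$; properness ($\rad(K)<1$) then gives $|a+z_0|=1/|\iota(a+z_0)|\ge 1/\rad(K)>1$. In particular $|a|\ge 1/\rad(K)-\rad(K)$, so only finitely many digits in $G_{i,j}$ satisfy $|a|<2\rad(K)$, and every $a\in G_{i,j}$ has $|a+z_0|$ bounded below by a constant strictly greater than $1$.

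Next I would estimate $|Q(a+z)|$ from below on the complex thickening. Writing $z=z_0+w$ with $z_0\in B_j$ and $w\in \epsilon U$ and setting $b=a+z_0\in\R^d$, the triangle inequality applied to the bilinear expansion $Q(b+w)=|b|^2+2b\cdot w+Q(w)$ yields
\[
|Q(a+z)|\ge |b|^2-2|b|\epsilon-\epsilon^2.
\]
Since $|b|\ge 1/\rad(K)>1$, by shrinking $\epsilon$ if necessary (the same thickening parameter already constrained in Lemma~\ref{lem:contraction principle}) we obtain $|Q(a+z)|\ge |b|^2/2$, hence $|w_a(z)|\le 2^d|b|^{-2d}$. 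For $|a|\ge 2\rad(K)$ we further have $|b|\ge |a|-\rad(K)\ge |a|/2$, giving $|w_a(z)|\le 2^{3d}|a|^{-2d}$ uniformly in $z\in D_j$.

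Finally I would conclude with a standard lattice sum estimate. Using $\sup_z\sum_a\le \sum_a\sup_z$ and $\#\{a\in\Zee:R\le |a|<2R\}\asymp R^d$, the tail sum $\sum_{a\in\Zee,\,|a|\ge 2\rad(K)}|a|^{-2d}$ compares with $\int_1^\infty r^{d-1-2d}\,dr$, which is finite because $2d>d$. The finitely many remaining $a\in G_{i,j}$ with $|a|<2\rad(K)$ each contribute a bounded term thanks to the same lower bound on $|Q(a+z)|$. Combining these two contributions yields $S_{i,j}<\infty$. The main technical point is the complexified lower bound on $|Q(a+z)|$: I expect properness to provide exactly the margin needed so that, for sufficiently small $\epsilon$, all translates $a+D_j$ stay clear of the null cone $\{Q=0\}$ simultaneously.
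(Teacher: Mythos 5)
Your proof is correct and follows essentially the same strategy as the paper: split the sum over $a\in G_{i,j}$ into large and small digits, establish $|w_a(z)|=O(|a|^{-2d})$ uniformly on $D_j$ for large $a$ via a lower bound on $|Q(a+z)|$, and count lattice points to show the tail sum converges while treating the finitely many small digits separately. Your dyadic count $\#\{a\in\Zee: R\le|a|<2R\}\asymp R^d$ is a minor but genuine improvement over the paper's estimate $\#\{a\in\Zee: |a|^2=n\}=O(n^{(d-1)/2})$, since the latter is wasteful when $d=1$ (where $\sum_n n^{-(d+1)/2}$ diverges) and forces the paper into a separate case analysis there, whereas your version handles all $d\ge 1$ uniformly; likewise your direct expansion $|Q(a+z)|\ge|b|^2-2|b|\epsilon-\epsilon^2$ replaces the paper's appeal to Lemma~\ref{lemma:complexisreal} but is substantively the same estimate.
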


\begin{proof}
Let $z\in D_j$ and $a\in G_{i,j}$. Assume, for the moment that $|a|$ is large, say greater than some large positive number $R$. Then, applying a variant of \eqref{eq:nearness bound} with $b=a$ and $c=z$ gives
\(
w_a(z)&=\left(\sum_{i=1}^d(a_i+z_i)^2\right)^{-d} = \left(\left(\sum_{i=1}^d a_i^2\right) (1+O(|a|^{-1}))\right)^{-d}\\
&= |a|^{-2d} (1+O(|a|^{-1})) = O\left(|a|^{-2d}\right).
\)
The implicit constant in the big-O notation is uniform over all large $a$'s, which allows us to write the following:
\(
\sum_{|a|>R} w_a(z) = \sum_{|a|>R} O\left(|a|^{-2d}\right) = O\left( \sum_{|a|>R} |a|^{-2d}\right).
\)
 Now we consider an estimate on the number of terms $a$ where $|a|^2 = n$ (since $\Zee$ is integral, these are the only possibilities). Consider the annular region $\{x\in\R^d: \sqrt{n}-\rad(K)\le |x|\le\sqrt{n}+\rad(K)\}$, which has volume $O(\sqrt{n}^{d-1})$, since it's a thickening of the sphere of radius $\sqrt{n}$ in $\R^{d-1}$. On the other hand, for each such $a$, the shifted Dirichlet domain $a+K$ must be contained in the annular region, so the volume of the annular region is bounded below by $\norm{\{a\in \Zee: \norm{a}^2=n\} }\operatorname{vol}(K)$. Combining these, we obtain the estimate $\norm{\{a\in \Zee: \norm{a}^2=n\} }= O(n^{(d-1)/2})$.
From here, we obtain
\(
\sum_{|a|>R} w_a(z)  &= O\left( \sum_{|a|>R} |a|^{-2d}\right)= O\left( \sum_{n>R^2} \frac{n^{(d-1)/2}}{n^d} \right) 
&=  O\left( \sum_{n>R^2} \frac{1}{n^{(d+1)/2}} \right),
\)
which, if the dimension $d$ satisfies $d>1$, is convergent and uniformly bounded. When $d=1$, we instead analyze $ O\left( \sum_{|a|>R} |a|^{-2}\right)$ by using the fact that $\Zee$ is a lattice, so that $|a|$ will belong to an arithmetic progression, and $\sum_{|a|>R} |a|^{-2}$ converges like the tail of $\sum_{m>0} m^{-2}$.

For the remaining $a$'s with $|a|\le R$, we let $x$ be the nearest point to $z$ that lies in $B_j$. So $d(x,z)<\epsilon$. We then use \eqref{eq:nearness bound} with $b=a+x$ and $c=z-x$. Then we have
\(
\sum_{|a|\le R} w_a(z)&=\sum_{|a|\le R} \left(\frac{1}{\sum_{i=1}^d (a_i+z_i)^2}\right)^{d}\\ &= \left( \sum_{|a|\le R} \left(\frac{1}{\sum_{i=1}^d (a_i+x_i)^2}\right)^{d}\right)(1+O(\epsilon)).
\)
Since $|a+x|\ge \rad(K)^{-1}$, this is now a finite sum of bounded terms and thus is bounded, as desired.
\end{proof}

\begin{lemma}\label{lem:Lij bounded}
$L_{i,j}$ is a bounded linear operator from $H^\infty D_i'$ into $H^\infty D_j$.
\end{lemma}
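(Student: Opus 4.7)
The plan is to establish the stated boundedness in two steps: first verify the pointwise absolute convergence of the defining series, which already yields the sup-norm bound, and then upgrade pointwise convergence to holomorphicity of the sum on $D_j$ via a normal-families argument.

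For the first step, fix $f\in H^\infty D_i'$ and $z\in D_j$. By Lemma \ref{lem:contraction principle}, every $T_a^{-1}(z)$ with $a\in G_{i,j}$ lies in $D_i'$, so $|f(T_a^{-1}(z))|\le \Norm{f}_{H^\infty D_i'}$. Combined with Lemma \ref{lem:Sij bound}, this gives
\(
\sum_{a\in G_{i,j}} \left|f(T_a^{-1}(z))w_a(z)\right| \le \Norm{f}_{H^\infty D_i'} \sum_{a\in G_{i,j}} |w_a(z)| \le S_{i,j}\Norm{f}_{H^\infty D_i'}.
\)
Hence the series defining $L_{i,j}f(z)$ converges absolutely, and $\sup_{z\in D_j}|L_{i,j}f(z)|\le S_{i,j}\Norm{f}_{H^\infty D_i'}$.

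For the second step, enumerate $G_{i,j}=\{a_1,a_2,\dots\}$ and consider the partial sums $F_N(z)=\sum_{k=1}^{N} f(T_{a_k}^{-1}(z))w_{a_k}(z)$. Each $F_N$ is holomorphic on $D_j$: the complex-analytic extension of $T_a^{-1}$ is well-defined on $D_j$ by Lemma \ref{lem:contraction principle} (the only possible singularity of $\iota$ occurs at $Q=0$, which is avoided because $T_{a_k}^{-1}(D_j)\subset D_i'\subset D_i$), so $f\circ T_{a_k}^{-1}$ is holomorphic on $D_j$ as the composition of holomorphic maps, and $w_{a_k}(z)=Q(a_k+z)^{-d}$ is holomorphic on $D_j$ for the same reason. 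The bound from the first step shows that $\{F_N\}$ is uniformly bounded on $D_j$ by $S_{i,j}\Norm{f}_{H^\infty D_i'}$, so it is a normal family on each connected component of $D_j$. Since $\{F_N\}$ converges pointwise on $D_j$ to $L_{i,j}f$, Vitali's convergence theorem yields that the convergence is locally uniform and that $L_{i,j}f$ is holomorphic on $D_j$.

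Combining the two steps, $L_{i,j}f\in H^\infty D_j$ with $\Norm{L_{i,j}f}_{H^\infty D_j}\le S_{i,j}\Norm{f}_{H^\infty D_i'}$, so $L_{i,j}$ is a bounded linear operator with operator norm at most $S_{i,j}$. The main subtlety is purely the holomorphicity of the limit, which is why the Vitali step is needed; the absolute convergence of the series is already provided by Lemma \ref{lem:Sij bound}.
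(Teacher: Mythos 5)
Your proof is correct and takes essentially the same route as the paper: first derive the pointwise sup-norm bound $|L_{i,j}f(z)|\le S_{i,j}\Norm{f}_{H^\infty D_i'}$ via Lemma~\ref{lem:Sij bound} and the fact (Lemma~\ref{lem:contraction principle}) that $T_a^{-1}(D_j)\subset D_i'$, and then upgrade pointwise convergence to holomorphicity of the limit via Vitali's theorem applied to the uniformly bounded partial sums. The paper organizes the partial sums by digit size ($|a|\le R$) rather than an arbitrary enumeration, and verifies pointwise convergence by a Cauchy argument, but these are cosmetic differences; your explicit verification that each partial sum is holomorphic and that the series converges absolutely is a modest tightening of the exposition, not a different argument.
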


\begin{proof}
Fix $f\in H^\infty D_i'$, and for $R\in \mathbb{N}$, consider the bounded-digit sum
\(
g_R(z)=\sum_{|a|\le R, a\in G_{i,j}} w_a(z) f(T_a^{-1} z).
\)
Then we have that
\(
|g_R(z)| &\le \sum_{|a|\le R, a\in G_{i,j}} |w_a(z)|\cdot \left| f(T_a^{-1} z)\right|\\ &\le \left( \sum_{|a|\le R, a\in G_{i,j}} |w_a(z)|\right) \cdot \sup_{a\in G_{i,j}} \left| f(T_a^{-1} z) \right| \\
&\le S_{i,j} \cdot \Norm{f}_{H^{\infty}D_i'},
\)
where $S_{i,j}$ was as defined in Lemma \ref{lem:Sij bound}. Thus, the sequence $\{g_R\}$ is uniformly bounded on $D_j$. By appealing to the proof of Lemma \ref{lem:Sij bound} as needed, we can moreover show that for any point $z\in D_j$, the sequence $\{g_R(z)\}$ is Cauchy and therefore  $\lim_{R\to \infty} g_R(z)$ converges to something we will call $g(z)$.  By Vitali's convergence theorem \cite[Prop.~7]{narasimhan1971several}, $g_R$ converges uniformly to $g$ on compact subsets of $D_j$ and so $g$ is analytic on all of $D_j$. Morever, since $|g_R(z)|\le S_{i,j}\cdot \Norm{f}_{H^{\infty} D_i'}$, we also have $|g(z)|\le S_{i,j}\cdot \Norm{f}_{H^{\infty} D_i'}$. Thus $g=L_{i,j}f$ and $\Norm{L_{i,j}f}_{H^{\infty}D_j} \le S_{i,j}\cdot \Norm{f}_{H^{\infty} D_i'}$, which completes the proof.
\end{proof}

\begin{lemma}\label{lem:full cylinders are dense}
Every open disk inside every $B_i$, $i\in \mathcal{I}$, contains a full cylinder.
\end{lemma}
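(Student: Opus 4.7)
The plan is to build a full cylinder inside any given open disk $D \subset B_i$ by concatenating a very deep cylinder that sits inside $D$ with a full cylinder drawn from the appropriate cell of the serendipitous decomposition. Three tools already established in this section do all the work: the shrinking-diameter bound from Condition (D) (Lemma \ref{lemma:condition D}), the finite range property (Lemma \ref{lemma:Proving finite range}), and the existence of a full cylinder inside each cell $U_j$ (Lemma \ref{prop:All Uj contain full cylinders}).

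Let $D = B(x_0, r)$. First, I will invoke the bound $\sigma(n) \le 2\,\rad(K)^{2n+1}$ to pick $n$ large enough that $\sigma(n) < r/2$, so every rank-$n$ cylinder has diameter below $r/2$. Next, I will select a point $x \in B(x_0, r/2)$ whose forward $T$-orbit never reaches the origin, which is possible because $\bigcup_{k \ge 0} T^{-k}(0)$ is countable while the half-disk is uncountable; setting $s = a_1(x) \cdots a_n(x)$, the cylinder $C_s$ contains $x$ and has diameter less than $r/2$, so every $y \in C_s$ satisfies $d(y, x_0) \le d(y, x) + d(x, x_0) < r$ and hence $C_s \subset D$.

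Next, the finite range property produces an index $j$ with $T^n C_s = U_j$, and Lemma \ref{prop:All Uj contain full cylinders} supplies a full cylinder $C_t \subset U_j$ with $T^{|t|} C_t = K$. Since $T_s$ is a bijection from $C_s$ onto $U_j$ by Condition (A) (Lemma \ref{lemma:condition A}), the pullback $C_{st} := T_s^{-1}(C_t)$ is a nonempty subcylinder of $C_s$ satisfying
\(
C_{st} \subset C_s \subset D \qquad \text{and} \qquad T^{n+|t|} C_{st} = T^{|t|}(T^n C_{st}) = T^{|t|} C_t = K,
\)
so $C_{st}$ is a full cylinder inside $D$, as desired.

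There is no serious obstacle: the result is essentially a corollary of the three lemmas above. The only fiddly point is ensuring that the base point $x$ has a fully-defined infinite digit sequence, which is immediate from the countability of the forbidden set $\bigcup_{k \ge 0} T^{-k}(0)$.
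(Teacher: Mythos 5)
Your proof is correct and follows the paper's own argument: shrink to a rank-$n$ cylinder inside the disk using Condition (D), map it forward to one of the finitely many $U_j$'s by the finite range property, and concatenate with a full cylinder from $U_j$ supplied by Condition (E). The only addition is your explicit choice of the base point $x$ off the countable set $\bigcup_{k\ge 0}T^{-k}(0)$ and the $r/2$ triangle-inequality bookkeeping, which the paper leaves implicit.
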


\begin{proof}
Since condition (D) is satisfied (Lemma \ref{lemma:condition D}), we know that any such open disk must contain a cylinder $C_{a_1\dots a_n}$ (not necessarily a full cylinder). Moreover, $T^n C_{a_1\dots a_n} = U_j$ for some $U_j$, and $U_j$ contains a full cylinder $C_{b_1\dots b_m}$ by condition (E) (Lemma \ref{prop:All Uj contain full cylinders}). Thus $C_{a_1\dots a_n b_1\dots b_m}$ is contained in our open disk and is full, as desired.
\end{proof}

\bibliographystyle{plain}
\bibliography{bibliography}

\end{document}